\documentclass[11pt,letterpaper]{amsart}
\usepackage{graphicx, standalone} % Required for inserting images
\usepackage{float}

\usepackage{amsthm, amssymb, amsmath, hyperref, dsfont, enumitem, tcolorbox, esint, mathtools,contour}
\usepackage[title]{appendix}
\usepackage{bbm,bm}

\usepackage[normalem]{ulem}

\usepackage{ytableau,tikz,tikz-cd}
\usetikzlibrary{decorations.pathreplacing}
\usetikzlibrary{calc,intersections}

\mathtoolsset{showonlyrefs,showmanualtags}

\usepackage{orcidlink}

\usepackage{todonotes}  % option '[disable]' to remove the notes, [textwidth=2.5cm]
\newtheorem{theorem}{Theorem}[section]
\newtheorem{lemma}[theorem]{Lemma}
\newtheorem{prop}[theorem]{Proposition}
\newtheorem{definition}[theorem]{Definition}
\newtheorem{corollary}[theorem]{Corollary}

\newtheorem{remark}[theorem]{Remark}

\numberwithin{equation}{section}

\newcommand{\oline}{\overline}
\newcommand{\oct}{\hat{\mathbbm{O}}}

\usepackage{pdfcomment}
\newcommand\alttext[1]{% 
\useasboundingbox (current bounding box);
\draw let  \p1 = (current bounding box.south west),
        \p2 = (current bounding box.north east)
    in
  node at (current bounding box.center) 
     {\pdftooltip{\phantom{\rule{\dimexpr\x2-\x1\relax}{\dimexpr\y2-\y1\relax}}}%
                 {#1}};
}

\usepackage{pdfcomment}
\newsavebox\alttextbox
\NewDocumentEnvironment{altfig}{m}
  {\begin{lrbox}{\alttextbox}}
  {\end{lrbox}\pdftooltip{\usebox{\alttextbox}}{#1}}

\title[$G_2$ and the Maximally Symmetric $(3, 6,\ldots , 8)$ Distribution]{$G_2$ and the Maximally Symmetric (3, 8) Distribution with 6-Dimensional Square}

\thanks{ I.\ Zelenko is partly supported by NSF grant DMS 2105528 and Simons Foundation Collaboration Grant for Mathematicians 524213.}

\author{Nicklas Day}
\address{Nicklas Day\\
	Department of Mathematics\\
	North Carolina State University\\
	Raleigh\\
	North Carolina \ 27695\\
	USA}
\email{ncday@tamu.edu}
\urladdr{\url{https://sites.google.com/tamu.edu/nicklasday/home}}

\author{Boris Doubrov}
\address{Boris Doubrov, Belarusian State University, 
Nezavisimosti Ave.~4, Minsk 220030, Belarus;
 E-mail: boris.doubrov@gmail.com}

\author{Igor Zelenko}
\address{Igor Zelenko\\
         Department of Mathematics\\
         Texas A\&M University\\
         College Station\\
         Texas \ 77843\\
         USA}
\email{zelenkotamu@tamu.edu}
\urladdr{\url{https://people.tamu.edu/~zelenkotamu/}}

\begin{document}
\subjclass[2020]{58A30, 58A17, 37J60, 53A55, 17B25, 17B66, 17A75, 53C17, 49K15}
\keywords{distributions (subbundles of tangent bundles), exceptional Lie group $G_2$, symplectification, the algebra of infinitesimal symmetries, Tanaka prolongation, abnormal extremals, split octonions}

\begin{abstract}
In 1910, \'{E}lie Cartan realized the split real form of the exceptional Lie group $G_2$ as the symmetry group of the maximally symmetric rank 2 distribution on a $5$-dimensional manifold with the small growth vector $(2,3,5)$ \cite{FiveVariables}. In this paper, we discover a new appearance of $G_2$ in the geometric theory of distributions, arising from a rank 3 distribution on an $8$-dimensional manifold with growth vector $(3,6,8)$. The algebra of infinitesimal symmetries of this distribution at any point is $29$-dimensional and isomorphic to $(\mathfrak{g}_2 \oplus \mathbb{R}) \ltimes W$, where $\mathfrak{g}_2$ is the Lie algebra of $G_2$ and $W$ is its adjoint module. Our model possesses three remarkable properties. First, it is maximally symmetric among all bracket-generating rank 3 distributions with a $6$-dimensional square (a family that includes both $(3,6,8)$ and $(3,6,7,8)$ distributions). To the best of our knowledge, this is the first example of a family of distributions defined by a set of prescribed small growth vectors in which maximal symmetry is achieved by a member whose growth vector is not the longest.
Second, this model provides the first counterexample to the conjecture that all bracket-generating rank 3 distributions with a $6$-dimensional square are of maximal class at a generic point (a property known to hold in dimensions $6$ and $7$). Third, further analysis yields the control-theoretic consequence that all abnormal extremal trajectories of this model originating at any point of the ambient manifold have corank at least $2$. As far as we are aware, this is the first example with this property among bracket-generating distributions with generic small growth vector for a given rank and ambient dimension. We also give an interpretation of our model in terms of the natural algebraic structure on the tangent bundle to split octonions.
\end{abstract}
\maketitle 

\section{Introduction}
We construct a bracket-generating rank $3$ distribution $\mathfrak{D}$ on an $8$-di\-men\-sion\-al manifold with square $\mathfrak{D}^2 = \mathfrak{D}+ [\mathfrak{D},\mathfrak{D}]$ of constant rank $6$ which is maximally symmetric among all such distributions. The Lie algebra of infinitesimal symmetries for $\mathfrak{D}$ at any point is $29$-dimensional with semisimple factor isomorphic to the split real form of $\mathfrak{g}_2$ (see the diagram below). The distribution $\mathfrak{D}$ can be constructed using the negatively graded Lie algebra 
\begin{equation}
    \label{n basis}
    \mathfrak{n} = \mathfrak{n}_{-1}\oplus\mathfrak{n}_{-2}\oplus\mathfrak{n}_{-3} = \langle \oline x,\oline e_0, \oline f_0\rangle \oplus \langle \oline e_{-1},\oline f_{-1},\oline \eta\rangle \oplus \langle \oline f_{-2},\oline \nu\rangle
\end{equation}
with nontrivial bracket relations
\begin{gather}
    \label{n rels}
    [\oline x, \oline e_0] = \oline e_{-1},\quad [\oline x, \oline f_{0}] = \oline f_{-1},\quad [\oline x, \oline f_{-1}] = \oline f_{-2},\\
    [\oline e_0, \oline f_0]=\oline\eta,\quad [\oline f_0, \oline f_{-1}]=\oline\nu.
\end{gather}
The distribution $\mathfrak D$ is the left-invariant distribution $\mathfrak n_{-1}$ on the simply connected Lie group $N$ with Lie algebra $\mathfrak n$. The algebra of infinitesimal symmetries of $\mathfrak{D}$ at any point of $N$ is isomorphic to $(\mathfrak{g}_2\oplus \mathbb{R})\ltimes W$, where $\mathfrak{g}_2$ is the split real form of the exceptional Lie algebra $\mathfrak g^\mathbb C_2$ and $W$ is an adjoint module of $\mathfrak{g}_2$. This Lie algebra can be visualized by means of the root system of $G_2$ as follows:

% \begin{figure}[H]
\label{fig:TG2 1}
\begin{center}
\resizebox{11.5cm}{1.8cm}{
    \begin{tikzpicture}[baseline=(current bounding box.center)]    
        % LEFT PARENTHESIS
    % \draw[thick] (-1.4,-1.5) to (-1.7,0)
    %              to (-1.4,1.5);
    \draw[thick] (-4.3,-1.5) .. controls (-4.6,0) .. (-4.3,1.5);
        
    \begin{scope}[shift={(-2.5,0)}, scale=1.5]
        % Triangle 1 vertices (original angles 0, 120, 240 rotated by -30)
        \coordinate (T0) at ({cos(-30)}, {sin(-30)});
        \coordinate (T120) at ({cos(90)}, {sin(90)});
        \coordinate (T240) at ({cos(210)}, {sin(210)});
        % Triangle 2 vertices (original angles 60, 180, 300 rotated by -30)
        \coordinate (R60) at ({cos(30)}, {sin(30)});
        \coordinate (R180) at ({cos(150)}, {sin(150)});
        \coordinate (R300) at ({cos(270)}, {sin(270)});
        \filldraw[black] (0,0) circle (0.02);
        \path[name path=TA] (T0) -- (T120);
        \path[name path=TB] (T120) -- (T240);
        \path[name path=TC] (T240) -- (T0);
        \path[name path=RA] (R60) -- (R180);
        \path[name path=RB] (R180) -- (R300);
        \path[name path=RC] (R300) -- (R60);
        \path[name intersections={of=TA and RC, by=B0}];
        \path[name intersections={of=TA and RA, by=B60}];
        \path[name intersections={of=TB and RA, by=B120}];
        \path[name intersections={of=TB and RB, by=B180}];
        \path[name intersections={of=TC and RB, by=B240}];
        \path[name intersections={of=TC and RC, by=B300}];
        
        % Fixed y-coordinate for the weight rows
        \coordinate (wght1row) at (0,-1.6);
    
        % Fixed y-coordinates for top and bottom of dotted lines
        \coordinate (toprow) at (0,-0.8);
        \coordinate (bottomrow) at (0,-1.45);
    
        % Row labels
        \coordinate (labelcol) at (-1.0, 0);
        \node[anchor=east] at (labelcol |- wght1row) {$\mathrm{\bf{wt}}_1$};
    
        % Drop-down lines for wght1
        \draw[white!60!black] (R180) -- (T240 |- bottomrow);
        \draw[white!60!black] (B180) -- (B180 |- bottomrow);
        \draw[white!60!black] (B120) -- (B240 |- bottomrow);
        \draw[white!60!black] (T120) -- (R300 |- bottomrow);
        \draw[white!60!black] (B60) -- (B60 |- bottomrow);
        \draw[white!60!black] (R60) -- (R60 |- bottomrow);
        \draw[white!60!black] (B0) -- (B0 |- bottomrow);
        
        % wght1 labels
        \node at (T240 |- wght1row) {3};
        \node at (B180 |- wght1row) {2};
        \node at (B240 |- wght1row) {1};
        \node at (R300 |- wght1row) {0};
        \node at (B60 |- wght1row) {-1};
        \node at (B0 |- wght1row) {-2};
        \node at (R60 |- wght1row) {-3};
    
        \draw[blue, thick] (T0) -- (T120) -- (T240) -- cycle;
        \draw[blue, thick] (R60) -- (R180) -- (R300) -- cycle;
    
        \node[below right] at (B300) {\contour{white}{$\oline f_0$}};
        \node[right] at (B0) {\contour{white}{$\oline f_{-1}$}};
        \node[right] at (T0) {\contour{white}{$\oline \nu$}};
        \node[right] at (R60) {\contour{white}{$\oline f_{-2}$}};
        \node[below] at (R300) {\contour{white}{$\oline f_1$}};
        % \node[above left] at (B120) {\contour{white}{${F_0'}$}};
        % \node[left] at (B180) {\contour{white}{$F_{-1}'$}};
        % \node[above left] at (R180) {\contour{white}{$S'$}};
        % \node[below left] at (T240) {\contour{white}{$F_{-2}'$}};
        % \node[above] at (T120) {\contour{white}{$F_1'$}};
        \node[above right] at (B60) {\contour{white}{$\oline x$}};
        % \node[below left] at (B240) {\contour{white}{$X'$}};
    \end{scope}
    
    \begin{scope}[shift={(5,0)}, scale=1.5]
        % Triangle 1 vertices (original angles 0, 120, 240 rotated by -30)
        \coordinate (T0) at ({cos(-30)}, {sin(-30)});
        \coordinate (T120) at ({cos(90)}, {sin(90)});
        \coordinate (T240) at ({cos(210)}, {sin(210)});
        % Triangle 2 vertices (original angles 60, 180, 300 rotated by -30)
        \coordinate (R60) at ({cos(30)}, {sin(30)});
        \coordinate (R180) at ({cos(150)}, {sin(150)});
        \coordinate (R300) at ({cos(270)}, {sin(270)});
        \filldraw[black] (0,0) circle (0.02);
        \path[name path=TA] (T0) -- (T120);
        \path[name path=TB] (T120) -- (T240);
        \path[name path=TC] (T240) -- (T0);
        \path[name path=RA] (R60) -- (R180);
        \path[name path=RB] (R180) -- (R300);
        \path[name path=RC] (R300) -- (R60);
        \path[name intersections={of=TA and RC, by=B0}];
        \path[name intersections={of=TA and RA, by=B60}];
        \path[name intersections={of=TB and RA, by=B120}];
        \path[name intersections={of=TB and RB, by=B180}];
        \path[name intersections={of=TC and RB, by=B240}];
        \path[name intersections={of=TC and RC, by=B300}];
        
        % Fixed y-coordinate for the weight rows
        \coordinate (wght1row) at (0,-1.6);
    
        % Fixed y-coordinates for top and bottom of dotted lines
        \coordinate (toprow) at (0,-0.8);
        \coordinate (bottomrow) at (0,-1.45);
    
        % Row labels
        \coordinate (labelcol) at (-1.0, 0);
        \node[anchor=east] at (labelcol |- wght1row) {$\mathrm{\bf{wt}}_1$};
    
        % Drop-down lines for wght1
        \draw[white!60!black] (R180) -- (T240 |- bottomrow);
        \draw[white!60!black] (B180) -- (B180 |- bottomrow);
        \draw[white!60!black] (B120) -- (B240 |- bottomrow);
        \draw[white!60!black] (T120) -- (R300 |- bottomrow);
        \draw[white!60!black] (B60) -- (B60 |- bottomrow);
        \draw[white!60!black] (R60) -- (R60 |- bottomrow);
        \draw[white!60!black] (B0) -- (B0 |- bottomrow);
        
        % wght1 labels
        \node at (T240 |- wght1row) {4};
        \node at (B180 |- wght1row) {3};
        \node at (B240 |- wght1row) {2};
        \node at (R300 |- wght1row) {1};
        \node at (B60 |- wght1row) {0};
        \node at (B0 |- wght1row) {-1};
        \node at (R60 |- wght1row) {-2};

        \draw[blue, thick] (T0) -- (T120) -- (T240) -- cycle;
        \draw[blue, thick] (R60) -- (R180) -- (R300) -- cycle;
    
        \node[below right] at (B300) {\contour{white}{$\oline e_1$}};
        \node[right] at (B0) {\contour{white}{$\oline e_{0}$}};
        \node[right] at (T0) {\contour{white}{$\oline \eta$}};
        \node[right] at (R60) {\contour{white}{$\oline e_{-1}$}};
        \node[below] at (R300) {\contour{white}{$\oline e_2$}};
        % \node[above left] at (B120) {\contour{white}{${E_1'}$}};
        % \node[left] at (B180) {\contour{white}{$E_{0}'$}};
        % \node[above left] at (R180) {\contour{white}{$T'$}};
        % \node[below left] at (T240) {\contour{white}{$E_{-1}'$}};
        % \node[above] at (T120) {\contour{white}{$E_2'$}};
        % \node[above right] at (B60) {\contour{white}{$Y$}};
        % \node[below left] at (B240) {\contour{white}{$Y'$}};
    \end{scope}
        
        % MATH SYMBOL
        \node at (1.3,0) {\Huge $\oplus\ \mathbb R\hspace{0.7cm} \ltimes$};
        
        % RIGHT PARENTHESIS
    \draw[thick] (1.5,-1.5) .. controls (1.8,0) .. (1.5,1.5);
    \alttext{Two weight diagrams side by side, showing how a Lie algebra decomposes into a reductive factor and a nilpotent factor. Inside a large pair of parentheses, reading left to right: a first star-shaped diagram, then a direct-sum sign and a factor R; the parenthesis closes, followed by a semidirect-product sign and a second star-shaped diagram. So the whole expression is (first star direct sum R) semidirect product (second star). Each star is a Star-of-David hexagram of two overlapping triangles, the root diagram of the exceptional root system G 2, with a dot at the center for the zero weight. The left star is a copy of the Lie algebra g 2; the right star is an adjoint module of g 2. A single weight, wt 1, is read off the vertical columns, with an integer scale along the bottom: 3 down to minus 3 on the left, and 4 down to minus 2 on the right. Vertices are labeled by basis vectors: on the left x, nu, and f subscript i (from f subscript 1 down to f subscript minus 2); on the right e subscript i and eta.}
    \end{tikzpicture}
}
\end{center}
 % \caption{A decomposition of the symmetry algebra of $\mathfrak D$ into its reductive and nilpotent factors, represented with weight diagrams}
% \end{figure}

The most symmetric previously known example of a $(3,6,\ldots)$ distribution on an $8$-dimensional manifold admits $23$-dimensional infinitesimal symmetries and was of \textit{maximal class} \cite{doubrov2025large}. Maximality of class is a technical condition required for the application of the \textit{symplectification procedure} Doubrov and Zelenko developed in their papers \cite{doubrov2008rank3} \cite{doubrov2016JacobiCurves}.\footnote{We outline the symplectification procedure and define maximality of class in Section \ref{section: sympl}.} The distribution $\mathfrak{D}$ was discovered in attempts to prove that all bracket-generating rank $3$ distributions with $6$-dimensional square are of maximal class at a generic point, a conjecture analogous to the results for rank $2$ distributions in \cite{day2025canonicalframes}. However, this conjecture holds only in dimension $6$, which yields a parabolic geometry of type $\big(\mathrm{SO}(3,4),P_{3}\big)$, and in dimension $7$. The distribution $\mathfrak{D}$ arose as the first counterexample to the conjecture; namely, $\mathfrak{D}$ is nowhere of maximal class. While $\mathfrak{D}$ is notable for its symmetries, similar rank $3$ distributions $D$ with $\mathrm{rank}\, D^2=6$  of nonmaximal class can be also constructed in dimensions greater than $8$. 
This raises the natural question: In dimensions greater than $8$, is the maximally symmetric rank $3$ distribution with $6$ dimensional square of maximal or nonmaximal class? Our experiments in dimensions 9 and 10 show that, in contrast to dimension 8, the maximum dimension of the symmetry algebra is achieved by distributions of maximal class. We therefore conjecture that this phenomenon persists in all dimensions $n > 8$.

We refer to a rank $k$ distribution on an $n$-dimensional manifold as a $(k,n)$ distribution. All distributions are assumed smooth and bracket-generating (i.e., iterated brackets of local sections span the tangent bundle). For $i>0$, we use $D^{-i}$ as a filtrand in the \textit{weak derived flag of $D$}, which is a distribution defined by 
\begin{equation}
    D^{-1} = D,\quad\text{and}\quad  D^{-i} = D^{1-i} + [D, D^{1-i}]\text{ for }i>1
\end{equation}
where the brackets are Lie brackets of local sections. The \textit{small growth vector} of $D$ is $$\big(\mathrm{rank}(D^{-1}),\mathrm{rank}(D^{-2}),\ldots, \mathrm{rank}(D^{-\mu})\big),$$ where $\mu$ is the least integer such that $D^{-\mu-1}=D^{-\mu}$. In this notation, $\mathfrak D$ is maximally symmetric among the family of all $(3,6,8)$ and $(3,6,7,8)$ distributions. Surprisingly, the distribution $\mathfrak D$ is maximally symmetric in this family of distributions despite having the \textit{shorter} of the two small growth vectors. To the best of our knowledge, this provides the first example of such a phenomenon.

In Section \ref{section: sympl}, we review the symplectification procedure for rank $3$ distributions originally given in \cite{doubrov2008rank3}. The symplectification procedure is a method for assigning to a rank $3$ distribution $D\to M$ of maximal class a bundle over $M$ with a canonical frame, thereby obtaining local invariants which classify $D$ up to local diffeomorphism. This method was dubbed the \textit{symplectification procedure} because it utilizes the canonical symplectic structure on $T^*M$ to obtain from $D$ a distinguished submanifold of $T^*M$. Projectivizing then yields a filtered submanifold of $\mathbb{P}T^*M$, to which one can apply Tanaka theory \cite{DayDoubrovZelenko2026} or other methods (described explicitly in \cite{doubrov2008rank3} and \cite{doubrov2009london}) to assign a bundle with canonical frame. We say $D$ is \textit{of maximal class} at a point $p$ if the maximal filtrand of this submanifold coincides with the restriction of contact distribution on $\mathbb{P}T^*M$ at a generic point of the fiber over $p$.

The primary advantage of the symplectification procedure over other approaches to the local geometry of distributions is in the simplicity of its basic invariant, the \textit{Jacobi symbol}, which is encoded by a skew Young diagram; Jacobi symbols are easily classified. Further, because only finitely many Jacobi symbols arise from those distributions of a fixed rank on an ambient manifold of fixed dimension, the Jacobi symbol of a smooth distribution is generically constant. 

This differs from the situation in Tanaka theory, where the basic invariant, called the \textit{Tanaka symbol}, is a graded nilpotent Lie algebra which models the filtered tangent space. Although such algebras can be classified in low dimensions, the computations quickly become infeasible as the dimension of the ambient manifold grows. Even in small dimensions, the Tanaka symbols are classified by continuous parameters, so that generic distributions do not have constant Tanaka symbol and therefore cannot be treated by standard Tanaka theory. For example, the Tanaka symbol of a $(3,6,8)$ distribution is determined up to isomorphism by a plane in $\mathfrak{sl}_3(\mathbb R)$ up to the adjoint action of $\mathrm{SL}_3(\mathbb R)$ on its Lie algebra. An explicit description of the resulting space of orbits is difficult to obtain and involves multiple continuous moduli. This problem is partially treated in \cite{Procesi1976, Formanek1979, Teranishi1986}. The resulting size of the set of nonisomorphic Tanaka symbols obscures which Tanaka symbol yields the maximally symmetric $(3,6,8)$ distribution. In fact, the Tanaka symbol of $\mathfrak D$ corresponds to the plane in $\mathfrak{sl}_3(\mathbb R)$ of nilpotent endomorphisms of rank $1$ having the common image; see Appendix \ref{Appendix} for details.

For distributions of nonmaximal class, the basic invariant of the symplectification procedure is not only the Jacobi symbol, but the \textit{Jacobi-Tanaka algebra}, which is the Tanaka symbol of the filtered submanifold resulting from the symplectification procedure. In general, the Jacobi symbol can be recovered from the Jacobi-Tanaka algebra and not vice versa, but the two invariants are equivalent for distributions of maximal class.

In Section \ref{section: dim 6-7}, we prove our first result, which is an analogue of the results on rank $2$ distributions in \cite{day2025canonicalframes} and is proven using arguments similar to those in that work:
\begin{theorem}
    \label{Thm: (3,6) and (3,7)}
    Let $n=6$ or $7$. Every bracket-generating $(3,n)$ distribution with $6$-dimensional square is of maximal class at a generic point.
\end{theorem}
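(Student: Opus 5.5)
Recall the setup of the symplectification procedure. Let $D\to M$ be a bracket-generating $(3,n)$ distribution with $\mathrm{rank}\,D^2=6$. We work in the annihilator $(D^2)^\perp\subset T^*M$, a bundle of rank $n-6$. For $n=6$ this is the zero section, so instead one uses $(D)^\perp$ restricted appropriately, as in \cite{doubrov2008rank3}. Fix a generic point $\lambda$ of the fiber. The characteristic (abnormal) directions come from the restriction of the canonical symplectic form $\sigma$, and one builds the flag
\[
J^{(0)}(\lambda)\subset J^{(-1)}(\lambda)\subset\cdots
\]
by iteratively adding brackets with the characteristic line field. Maximality of class at $\lambda$ means this flag eventually fills the contact distribution on $\mathbb{P}T^*M$. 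Equivalently, the linearized Jacobi curve spans the full symplectic space $\Delta_\lambda^\perp/\Delta_\lambda$. The plan is to verify this at a generic $\lambda$ by direct computation in adapted local frames, as in \cite{day2025canonicalframes}.

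\textbf{Case $n=6$.} Here $D$ is a $(3,6)$ distribution with $D^2=TM$. By genericity, or in fact always for bracket-generating rank $3$ with square of rank $6$, $D$ is the parabolic geometry of type $(\mathrm{SO}(3,4),P_3)$, whose symbol is the free $2$-step nilpotent algebra $\mathbb{R}^3\oplus\Lambda^2\mathbb{R}^3$. Maximality of class depends only on the symbol at the relevant jet order, and for $n=6$ only brackets of length $2$ enter. It therefore suffices to check maximal class on the flat model. Equivalently, one checks it pointwise using the fact that the quasi-principal structure is determined by the $\Lambda^2 D\cong TM/D$ isomorphism. On the flat model the computation is explicit. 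The characteristic directions on $D^\perp$ are determined by the $2$-form $\lambda([\cdot,\cdot])$ on $D$. This is a nonzero skew form on a $3$-space, so it has a $1$-dimensional kernel, which gives a rank-$1$ characteristic line. One then checks that two further differentiations reach all of $D^\perp$ directions.

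\textbf{Case $n=7$.} Choose a local frame $X_1,X_2,X_3$ of $D$. Set $X_{ij}=[X_i,X_j]$, so that $X_1,X_2,X_3,X_{12},X_{13},X_{23}$ frame $D^2$. Add $Y$ with $TM=D^2\oplus\langle Y\rangle$. Then $(D^2)^\perp$ is a line bundle, and its projectivization is a copy of $M$, namely a $7$-manifold $\Lambda\subset\mathbb{P}T^*M$. The brackets $[X_i,X_{jk}]\bmod D^2=c_{i,jk}Y$ define a tensor $c\in D^*\otimes\Lambda^2D^*$. The Jacobi identity forces its totally skew part to vanish, and bracket-generation forces $c\neq0$. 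On $(D^2)^\perp$ the abnormal extremals are determined by $\sigma$ restricted to this line bundle. Writing $\lambda=\psi\,\omega$, where $\omega$ annihilates $D^2$ with $\omega(Y)=1$, the kernel of $\sigma|_{(D^2)^\perp}$ projects onto the kernel of the skew map built from $c$. The key step is the linear-algebraic one. Consider the matrix obtained by contracting $c$ against $D\cong\Lambda^2 D^*$; identifying $\Lambda^2D$ with $D^*\otimes\det D$ gives a bilinear form $B$ on $D$. One needs to show that the characteristic direction $\ell\subset D$, the kernel of $B$'s skew part if nonzero or a generic direction otherwise, satisfies the maximal-class condition. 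Concretely, the successive brackets $\ell,[\ell,D],\dots$ must span modulo the appropriate subspaces so that the Jacobi curve has the maximal possible osculation. I would split by the rank and symmetry type of $B$, noting that $c\neq0$ and the Jacobi constraint exclude degenerate cases. In each case I would exhibit the required independent vectors at a generic point.

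The main obstacle is the $n=7$ case analysis. One must show that the only obstruction to maximal class is an identity among the structure functions $c$ that is incompatible with the Jacobi identity together with $\mathrm{rank}\,D^2=6$ and bracket generation. For this I would first try to reduce, as in \cite{day2025canonicalframes}, to a statement that nonmaximal class on an open set forces the fiberwise Jacobi curves to be degenerate. That in turn would force an integrable subdistribution or a drop in $\mathrm{rank}\,D^{-3}$, contradicting $D^{-3}=TM$.
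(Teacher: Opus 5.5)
\textbf{There is a genuine gap: the $n=7$ argument is carried out on the wrong locus, and the mechanism that excludes nonmaximal class is missing.}

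Maximality of class is a condition on the Jacobi flag at generic points of $W_D=\mathbb P\big(D^\perp\setminus(D^2)^\perp\big)$. For $n=7$ this is a $10$-dimensional manifold whose fibers over $M$ are $\mathbb{RP}^3$ minus a point. The line bundle $(D^2)^\perp$ you work on is exactly the excluded point in each fiber. There $\lambda([X_i,X_j])=0$ for all $i,j$, the characteristic field $u_{23}\vec u_1+u_{31}\vec u_2+u_{12}\vec u_3$ vanishes, and the characteristic space is not a line. On $W_D$ the characteristic direction is the kernel of the nonzero skew form $\lambda([\cdot,\cdot])|_D$. It is determined by the $D^2/D$-component of $\lambda$, not by your tensor $c$. (Incidentally, $c$ is a traceless element of $\mathrm{End}(D)\otimes\det D^*$, not a bilinear form.) So the proposed case split on $B$ does not test the condition in the statement. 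Moreover, no case analysis is actually carried out. Your last paragraph only hopes that nonmaximal class contradicts bracket generation, without giving a mechanism. The $n=6$ paragraph also has problems: the reduction to the flat model is unjustified, and it is one step, not two, that fills $\xi$.

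\textbf{The missing idea is a dimension count on the Jacobi symbol.} Since $\mathcal J^{(-k-\ell)}\subseteq\xi$, $\dim\mathcal J^{(0)}=n-1$, $\dim\xi=2n-5$, and $k\ge 1$, the Jacobi symbol satisfies $2k+\ell\le n-4$, with equality exactly for maximal class.

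For $n=6$ this forces $(k,\ell)=(1,0)$ with equality. Concretely, $\mathcal J^{(-1)}$ adds $t_{-1}=t_1=2$ dimensions to the $5$-dimensional $\mathcal J^{(0)}$ because $\dim D^2=6$, and this already fills $\xi$. Nothing about flat models is needed.

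For $n=7$ the only nonmaximal possibility is $(1,0)$, and it is excluded as follows.
\begin{enumerate}
\item Since $\ell=0$, the Jacobi-Tanaka algebra is fundamental.
\item Its graded pieces are $\langle\overline x,\overline e_1,\overline f_1\rangle$, $\langle\overline e_0,\overline f_0\rangle$, $\langle\overline e_{-1},\overline f_{-1}\rangle$, $\langle\overline\eta\rangle$.
\item $\mathfrak m_{-5}=[\mathfrak m_{-1},\mathfrak m_{-4}]=0$, because $[\overline x,\overline\eta]=[\overline e_1,\overline\eta]=[\overline f_1,\overline\eta]=0$ by Corollary~\ref{cor: more m rels}. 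The vanishing of $[\overline x,\overline\eta]$ needs the Jacobi-identity telescoping argument given there.
\item Hence the induced filtration has total rank $8$. But $\dim\check{\mathcal R}'_D=2n-4-\operatorname{rank}V_2=10-1=9$.
\item Since $\mathcal F^{-m-1}=\pi^*D$, bracket generation of $D$ would force the filtration to fill the tangent space. This contradiction rules out $(1,0)$.
\end{enumerate}
Your final paragraph gestures at this contradiction, but it is the vanishing $[\mathfrak m_{-1},\mathfrak m_{-4}]=0$ together with the count $8<9$ that actually proves it.
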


In Section \ref{section: dim 8}, we provide a $(3,8)$ distribution $\mathfrak D$ with $6$-dimensional square which is nowhere of maximal class. We then use Tanaka theory and the results of the works \cite{doubrov2008rank3} and \cite{doubrov2025large} of the second and third authors of this paper to prove the main theorem:

\begin{theorem}
    \label{Thm: main theorem}
    The $(3,8)$ distribution $\mathfrak D$ described after \eqref{n rels} has at every point a $29$-dimensional algebra of infinitesimal symmetries isomorphic to a semidirect sum
\[
    (\mathfrak{g}_2\oplus\mathbb{R})\ltimes W,
\]
where $\mathfrak{g}_2$ is the split real form of the exceptional Lie algebra $\mathfrak g_2^\mathbb C$, $W$ is an adjoint module of $\mathfrak{g}_2$ and $\mathbb R$ acts on $W$ by scaling. The germ of $\mathfrak D$ is the unique maximally symmetric germ of among $(3,8)$ distributions with $6$-dimensional square.
\end{theorem}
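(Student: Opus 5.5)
Two claims must be proved: that the symmetry algebra of $\mathfrak D$ is $(\mathfrak g_2\oplus\mathbb R)\ltimes W$ of dimension $29$, and that $\mathfrak D$ is the unique maximally symmetric germ in the class.

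For the first claim I would use Tanaka theory. $\mathfrak D$ is the flat left-invariant distribution with constant symbol $\mathfrak n$, so its symmetry algebra at every point is the Tanaka universal prolongation $\mathfrak u(\mathfrak n)=\mathfrak n\oplus\mathfrak u_0\oplus\mathfrak u_1\oplus\cdots$. This algebra is finite-dimensional because $\mathfrak n$ is not of infinite type. The computation proceeds in four steps.

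\begin{enumerate}
\item Compute $\mathfrak u_0=\mathfrak{der}_0(\mathfrak n)$, the degree-preserving derivations. These are determined by their restriction to $\mathfrak n_{-1}=\langle\oline x,\oline e_0,\oline f_0\rangle$, and the restriction must preserve the relations \eqref{n rels}. Equivalently, using the description of $\mathfrak n$ by a plane of rank-one nilpotents in $\mathfrak{sl}_3$ with common image (Appendix), $\mathfrak u_0$ is the stabilizer of that plane in $\mathfrak{gl}(\mathfrak n_{-1})$. A direct linear computation should give its dimension.
\item Compute $\mathfrak u_1,\mathfrak u_2,\dots$ degree by degree until they vanish, verifying that the total dimension is $29=8+\dim\mathfrak u_0+\sum_{k>0}\dim\mathfrak u_k$.
\item Identify the structure. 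Pick the grading element (a multiple of the identity on $\mathfrak n_{-1}$) and a Cartan subalgebra of $\mathfrak u_0$. Read off the weights ($\mathrm{wt}_1$ as in the diagram) and split the weight spaces into the two $14$-dimensional $G_2$ hexagram patterns plus the center $\mathbb R$.
\item Exhibit a $14$-dimensional subalgebra closed under brackets and check that it is simple of type $G_2$. One way is via root strings and the Killing form; alternatively, realize it via the split-octonion picture, as derivations of the split octonions. Then check that the complementary $14$ dimensions form an abelian ideal $W$ on which this subalgebra acts by the adjoint representation and on which $\mathbb R$ acts by scaling.
\end{enumerate}

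For uniqueness, I would split the class of $(3,8)$ distributions with $6$-dimensional square by class.

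\emph{Maximal class at a generic point.} The symplectification procedure of \cite{doubrov2008rank3} gives bounds on the symmetry dimension in terms of the Jacobi symbol. Those bounds, together with the results of \cite{doubrov2025large}, show that the symmetry dimension is at most $23<29$. This covers the $(3,6,7,8)$ distributions as well.

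\emph{Nowhere of maximal class.} The Tanaka symbol at a generic point is a $(3,6,8)$ symbol, i.e.\ a plane in $\mathfrak{sl}_3$ up to conjugation (for $(3,6,7,8)$ I would again appeal to the maximal-class bound or compute directly). I would use the standard estimate that the symmetry dimension is at most $\dim\mathfrak u(\mathfrak m)$, with equality only for the flat model. This requires the following:

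\begin{itemize}
\item Show that non-maximal class forces the plane into a small list of degenerate types, for instance planes containing only nilpotents of rank at most one, or other degenerate pencils.
\item Compute $\dim\mathfrak u$ for each type and show that only the symbol $\mathfrak n$ reaches $29$.
\end{itemize}

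A generic-point argument then applies: if the germ is maximally symmetric, the symbol must be $\mathfrak n$ and the structure must be flat, so the germ is locally equivalent to $\mathfrak D$. Any symbol whose prolongation has dimension less than $29$ is excluded.

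The main obstacle is this last classification: controlling the Tanaka prolongations over the continuous moduli of planes in $\mathfrak{sl}_3$. I would first characterize, via the Jacobi-curve criterion, which planes give non-maximal class. I expect this to be a closed, low-dimensional locus of degenerate pencils that can be handled case by case, with everything else falling under the $23$-dimensional bound.
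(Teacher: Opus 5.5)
Your computation of $\mathfrak u(\mathfrak n)$ and its identification with $(\mathfrak g_2\oplus\mathbb R)\ltimes W$ is essentially what the paper does: it writes down $\mathfrak g$ with the grading $\mathrm{wt}_1$ and checks, partly by computer, that it is the full prolongation. Your bound $23<29$ in the maximal-class case is also exactly the paper's.

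The gap is in the nowhere-maximal case. You propose to find the planes $\Pi\subset\mathfrak{sl}_3(\mathbb R)$ that ``give'' nonmaximal class, but that is a statement about flat models. The class of a non-flat $D$ is not a priori determined by its Tanaka symbol. So nothing in your argument prevents a nonmaximal $D$ from having its symbol anywhere in the $4$-parameter moduli of planes, where you have no prolongation bound.

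You need a lemma linking the two. One option is lower semicontinuity of the ranks of the Jacobi flag along the degeneration $\delta_t^*D\to\hat D$ of $D$ to its nilpotent approximation at $q$. This shows that if $D$ is nonmaximal at $q$, then so is the flat model of its symbol at $q$.

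Then the classification you only expect has to be carried out. It does appear to close. Take the flat model with $\Pi=\langle B_1,B_2\rangle$, and let $\alpha\in V$ be the $\mathfrak n_{-2}^*$-component of the covector under $\mathfrak n_{-2}\cong V^*$ (equivalently, the direction of the abnormal trajectory).
\begin{itemize}
\item $\mathcal J^{(-2)}$ exceeds $\mathcal J^{(-1)}$ by $2$ if and only if $\alpha,B_1\alpha,B_2\alpha$ are independent for generic $\alpha$.
\item If this fails, every vector is an eigenvector of some element of $\Pi$. This forces $\Pi$ to be the traceless part of $u\otimes P$, with $u\in V$ and $P\subset V^*$ a plane.
\item One further bracket shows the flag still fills $\xi$ unless $P=u^\perp$, i.e.\ unless $\Pi$ is conjugate to $\Pi_1$.
\end{itemize}
Only with both steps does every nonmaximal germ have constant symbol $\mathfrak n$. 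At that point your Tanaka bound applies, as does the equality-implies-flat argument via the grading element in $\mathfrak u_0$.

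Separately, in your parenthetical on nowhere-maximal $(3,6,7,8)$ distributions, the maximal-class bound is irrelevant. What works there is that both $(3,6,7,8)$ symbols have prolongations of dimension $<29$.

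The paper avoids the moduli problem with a different key idea. By Proposition \ref{Prop: (3,8) Nonmax Jacobi Alg}, every $(3,8)$ distribution with $6$-dimensional square that is not of maximal class has the \emph{same} Jacobi--Tanaka algebra $\mathfrak s$. The Jacobi symbol is forced to be $(1,1)$, and the remaining brackets are forced by the Jacobi identity and bracket generation. By Proposition \ref{Prop: Jacobi alg bij}, the symmetries of $D$ are in bijection with those of the reduced symplectification $(\check{\mathcal R}_D',\{\check{\mathcal F}^{-i}\})$, whose symbol is $\mathfrak s$. Remark \ref{Remark: Tanaka symm} and Proposition \ref{prop: Jacobi-Tanaka prolongation} (the same $\mathfrak g$, now graded by $\mathrm{wt}_2$) then give the bound $29$ uniformly, with no classification of planes.

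Your completed route would yield the extra fact that nonmaximal germs have Tanaka symbol $\mathfrak n$. The cost is the semicontinuity lemma and the case analysis above.
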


In Section \ref{octonions_sec}, we offer an interpretation of the distribution $\mathfrak D$ using the split octonions $\oct$. More specifically, we use the tangent bundle $T\oct$, which is naturally an algebra with automorphism group given by $ G_2\ltimes W$, where $G_2$ is the split real form of the exceptional Lie group $G_2^\mathbb C$ and $W$ is an adjoint module of $G_2$. The tangent bundle $TK\subseteq T\oct$ of the cone $K$ of pure imaginary zero divisors is preserved by the automorphisms of $T\oct$. We exhibit a rank $7$ distribution $\Delta'$ on $TK$ that is invariant under the automorphisms of $T\oct$; quotienting $\Delta'$ by its rank $4$ Cauchy characteristic, we obtain a distribution locally equivalent to $\mathfrak D$.

In Section \ref{section_corank}, we exploit the nonmaximality of class for $\mathfrak D$ to derive the following control-theoretic consequences: all abnormal extremal trajectories of $\mathfrak{D}$ have corank at least $2$, and generic abnormal extremal trajectories have corank exactly $2$. (The corank of an abnormal extremal trajectory is the codimension of the image of the differential of the endpoint map at this trajectory.) In contrast, for generic $(3,n)$ distributions with $n \geq 6$, through a generic point there passes at least one abnormal extremal trajectory of corank $1$.

Finally, in the second part of Appendix \ref{Appendix}, for completeness, we classify the Tanaka symbols of $(3,6,7,8)$ distributions and find another interesting phenomenon: There are exactly two Tanaka symbols, both of which correspond to the same Jacobi-Tanaka algebra so that all such distributions are of maximal class. As previously mentioned, the flat distribution of one of the symbols is the most symmetric distribution among all bracket-generating (3,8) distributions of maximal class with $6$-dimensional square, and it has 23-dimensional symmetry group.
The symmetry group of the flat distribution of the second Tanaka symbol is the split real form of $C_3$, so this distribution is the flat model for the parabolic geometry $C_3/P_{12}$ (i.e., associated with the grading of $C_3$ where both short roots are marked). To our knowledge, this is the first example in which the symplectification procedure unifies a parabolic geometry with a broader class of distributions sharing its Jacobi-Tanaka algebra, and the flat model of the unified class is \textit{not} the parabolic one.

%\IZ{The control aspect of our example should be briefly given here, i.e., that in this example all abnormal extremal trajectories through a point have corank at least $2$. This can be shown using the same arguments as in the proof of Theorem 4.1 of \cite{day2025canonicalframes} . One only needs to take into account that in the real analytic category---which is the case for left-invariant distributions on Lie groups---the relation analogous to (4.3) becomes an equality rather than an inequality.}

\section{Symplectification for Rank 3 Distributions and Maximality of Class}
\label{section: sympl}
We now summarize the symplectification of rank 3 distributions with $6$-dimensional square first constructed in \cite{doubrov2008rank3}. Let $D$ be a rank 3 distribution on an $n$-dimensional manifold $M$, $n\geq 6$, and assume the derived distribution $D^2=D + [D,D]$ has dimension $6$.

\subsection{The Characteristic Line Distribution}
\label{subsection:characteristic}
Let $s$ be the tautological (or \textit{Liouville}) $1$-form on the cotangent bundle $T^*M$, and let $\sigma = ds$ be the canonical symplectic $2$-form. Passing to the fiberwise projectivization $\mathbb{P}T^*M$ of the cotangent bundle, the $1$-form $s$ passes to a conformal class $\oline{s}$ of 1-forms which defines a natural contact structure on $\mathbb{P}T^*M$.

The annihilator of $D$ is defined by 
\begin{equation}
    D^\perp = \{(p,q)\in T^*M: p\in T_q^*M, p(v) = 0\text{ for all }v\in D(q)\}
\end{equation}
and forms a vector subbundle of $T^*M$; similarly define $(D^2)^\perp$. 

Let $\mathbb P(D^\perp)$ be the fiberwise projectivization of the bundle $D^\perp$ with the canonical projection $\pi: \mathbb P (D^\perp)\to M$. Because $D$ has rank $3$, the submanifold $\mathbb P (D^\perp)$ has codimension $3$ in $\mathbb{P}T^*M$ and therefore even dimension. The contact structure defined by $\oline{s}$ on $\mathbb PT^*M$ restricts to a corank $1$ distribution $\xi := \ker(\oline {s}|_{D^\perp})$ on $\mathbb P (D^\perp)$ 
with a conformal class of antisymmetric $2$-forms $\oline{\sigma}|_\xi$ on $\xi$. 
\begin{definition}
\label{abnorm_def}
A nontrivial \footnote{nontrivial means that its image is not a point} Lipschitz curve $\gamma:[0, T]\to \mathbb P D^\perp$ is called an \emph{abnormal extremal} of $D$ if $\gamma'(t)\in \ker \oline\sigma(\gamma(t))|_{\xi}$ for almost every $t$. The projection of $\gamma$ to $M$ is called an \emph{abnormal extremal trajectory} of $D$. \footnote{An equivalent definition of abnormal extremal trajectories via the endpoint map is given in Section \ref{section_corank}.}
\end{definition}
Note that because $\xi(\lambda)$ has odd dimension for each $\lambda\in \mathbb PD^\perp$, the kernel of the antisymmetric $2$-form $\oline{\sigma}|_{\xi}$ must have dimension at least $1$. In \cite{doubrov2008rank3} (see also Appendix \ref{Appendix} and \eqref{Characteristic_rank3} below), it is demonstrated that $\ker(\oline{\sigma}|_{\xi})$ has dimension exactly $1$ on the following subset $W_D$ of $\mathbb P D^\perp$:
\begin{equation}
    W_D:=\mathbb{P}\left(D^\perp\setminus\left(D^2 \right)^\perp\right)\subseteq \mathbb{P}T^*M,
\end{equation}
so that 
%$(W_D,\xi,\oline\sigma)$ is an even contact manifold. 
$\big(\xi,\oline{\sigma}|_\xi\big)$ defines an \emph{even contact structure} on $W_D$; that is, a corank $1$ distribution with a conformal class of antisymmetric $2$-forms with $1$-dimensional kernel at each point \cite[\S 1.1, \S2.1]{doubrov2008rank3}. Now define
\begin{equation}
    \mathcal C = \ker(\oline{\sigma}|_{\xi}),
\end{equation}
a canonical line distribution on $W_D$ called the \emph{characteristic line distribution}. The line $\mathcal{C}$ is also generated by Cauchy characteristic vector fields of the even contact distribution $\xi$ on $W_D$. A nontrivial curve tangent to $\mathcal{C}$ in $W_D$ is obviously an abnormal extremal in the sense of Definition \ref{abnorm_def}. Such abnormal extremals will be called \emph{regular abnormal extremals associated with $D$} to distinguish them from abnormal extremals that pass through $\mathbb P((D^2)^\perp)$. By construction, exactly one abnormal extremal germ passes through each point of $W_D$.

\subsection{The Jacobi Flag of a Distribution}
\label{subsection: CanFlag}
The manifold $W_D$ also carries a canonical \emph{vertical distribution} $\mathcal{V}$ given by the tangents to the fibers of the projection $\pi:W_D\to M$. The sum $\mathcal{C}\oplus \mathcal{V}$ then forms a canonical distribution which splits into involutive subdistributions; such a splitting was called a \emph{pseudo-product structure} by N. Tanaka in \cite{Tanaka1970}. Define
\begin{gather}
    \label{J1 def}
    \mathcal{J}^{(1)} = \mathcal{C}\oplus \mathcal{V}
    \\
    \label{J(-i) def}
    \mathcal{J}^{(-i)} = \mathcal{J}^{(1-i)} + [\mathcal{C},\mathcal{J}^{(1-i)}]\quad\text{for all }i\geq 0
\end{gather}
A small computation shows that for all $\lambda\in W_D$,
\begin{equation}
    \label{J0 fact}
    \mathcal{J}^{(0)}(\lambda) = (\pi^*D)_\lambda = \{v\in T_\lambda W_D: T\pi(v) \in D\}
\end{equation} is the lift of $D$ over $\pi$. Also define
\begin{equation}
    \mathcal{J}^{(i)} =(\mathcal{J}^{(1-i)})^{\angle}\quad \text{for all }1<i\leq m+1
\end{equation}
to be the skew-orthogonal complement of $\mathcal{J}^{(1-i)}$ with respect to $\oline{\sigma}$. One should note that $\mathcal{J}^{(1)} = \big(\mathcal{J}^{(0)}\big)^\angle$. Also, it is shown in \cite[\S2.2]{doubrov2008rank3} that for each $i\geq 1$,
\begin{equation}
    \label{osculation property}
    \mathcal{J}^{(i)}(\lambda) = \big\{Y(\lambda): Y\text{ is a local section of }\mathcal{J}^{(i-1)}\text{ with } [Y,\mathcal{C}]_\lambda\subseteq \mathcal{J}^{(i-1)} \big\}.
\end{equation}
 We call $\{\mathcal{J}^{(i)}\}$ the \emph{Jacobi flag of $D$}.
 Let $\mathcal{R}_D$ be the generic subset of $W_D$ where the rank of $\mathcal{J}^{(i)}$ is locally constant for each $i$; we will now use $\pi:\mathcal{R}_D\to M$ for the canonical projection from $\mathcal{R}_D$ rather than from $W_D$. On each connected component of $\mathcal{R}_D$, one obtains a flag of distributions of constant rank
\begin{equation}
    \label{Char flag}
    \mathcal{J}^{(m+1)}\subsetneq \mathcal{J}^{(m)}\subsetneq \cdots\subsetneq \mathcal{J}^{(1)}= \mathcal{C}\oplus \mathcal{V} \subsetneq \mathcal{J}^{(0)} = \pi^*D \subsetneq \cdots\subsetneq \mathcal{J}^{(-m)}
\end{equation}
where $m$ is the least natural number with $\mathcal{J}^{(-m)} = \mathcal{J}^{(-m-1)}$ on the connected component of $\mathcal{R}_D$. For fixed $k\in \mathbb{N}$, $i>0$, and $x\in M$, the set
\begin{equation}
    \{\lambda\in \pi^{-1}(x): \dim\mathcal{J}^{(-i)}>k\}
\end{equation}
is Zariski open in the fiber $\pi^{-1}(x)$. Therefore, the connected components of $\mathcal{R}_D$ are saturated with respect to the projection $\pi$, and the integer $m$ depends only upon the point $x\in M$; we will write $m(x)$ for this integer. Wherever we write $m$, omitting the argument $x$, we assume the function $m(x)$ is constant by restricting to the image of a connected component of $\mathcal{R}_D$ under $\pi$.

Because $\mathcal C$ is generated by characteristic vector fields of $\xi$, each $\mathcal{J}^{(i)}$ is included in $\xi$. The distribution $D$ is said to be \emph{of maximal class} at a point $x\in M$ if $\mathcal{J}^{(-m(x))}(\lambda)=\xi(\lambda)$. Among all rank $3$ distributional germs, those of maximal class are generic: maximality of class is a nonempty Zariski open condition on a certain jet of the distribution. The symplectification procedure for rank 3 distributions developed in \cite{doubrov2008rank3} assigned to $D$ a canonical frame on a fiber bundle near points of maximal class.

Intersecting the flag ${\mathcal{J}^{(i)}}$ with the vertical distribution gives the distributions
\begin{equation}
    \label{def V_i}
    V_i = \mathcal{J}^{(i)}\cap \mathcal V \quad\text{for each }0\leq i \leq m+1.
\end{equation}
Since $\mathcal{V}\subseteq \mathcal{J}^{(1)}$, we have that $V_1=\mathcal V$, so we can use \eqref{J1 def} to write
\begin{equation}
    \label{J(i) = V_i + C}
    \mathcal{J}^{(i)} = V_i\oplus \mathcal{C}\quad\text{for each }i\geq 1
\end{equation}
Observe that \eqref{J0 fact} implies that $[V_0,\mathcal{J}^{(0)}]\subseteq \mathcal{J}^{(0)}$. The proof of the following lemma exploits this fact and the involutivity of $V_0=\mathcal{V}$; we omit the proof here because it is essentially identical to that of Lemma $2$ in \cite{doubrov2009london}.
\begin{lemma}
    \label{Lemma: invol cond} 
    {\ }
    \begin{enumerate}
        \item $[V_i,V_i]\subseteq V_i$ for each $0\leq i \leq m+1$; that is, $V_i$ is involutive
        \item $[V_i,\mathcal{J}^{(1-i)}]\subseteq \mathcal{J}^{(1-i)}$ for each $1\leq i \leq m+1$.
    \end{enumerate}
\end{lemma}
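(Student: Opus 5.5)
The plan is induction on $i$, using two facts: the Jacobi flag satisfies the osculation property \eqref{osculation property}, and $\mathcal V$ is involutive with $[\mathcal V,\mathcal J^{(0)}]\subseteq\mathcal J^{(0)}$. The latter holds because $\mathcal J^{(0)}=\pi^*D$ is the preimage of a distribution under a submersion whose fibers are tangent to $\mathcal V$.

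\emph{Base cases.} For $i=0$ and $i=1$ we have $V_0=V_1=\mathcal V$, which is involutive, so (1) holds. Part (2) for $i=1$ is exactly the statement $[\mathcal V,\mathcal J^{(0)}]\subseteq \mathcal J^{(0)}$.

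\emph{Inductive step.} Assume (1) and (2) hold for some $i\ge 1$. We prove them for $i+1$.

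First, take local sections $X$ of $V_{i+1}$ and $Z$ of $\mathcal J^{(-i)}=\mathcal J^{(1-i)}+[\mathcal C,\mathcal J^{(1-i)}]$; it suffices to treat $Z\in\mathcal J^{(1-i)}$ and $Z=[C,Y]$ with $C\in\mathcal C$, $Y\in\mathcal J^{(1-i)}$. Since $V_{i+1}\subseteq V_i$, the induction hypothesis gives $[X,Y]\in\mathcal J^{(1-i)}\subseteq\mathcal J^{(-i)}$. For the second type, the Jacobi identity gives
\[
[X,[C,Y]] = [[X,C],Y] + [C,[X,Y]].
\]
The term $[C,[X,Y]]$ lies in $[\mathcal C,\mathcal J^{(1-i)}]\subseteq \mathcal J^{(-i)}$.

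For the term $[[X,C],Y]$, the osculation property \eqref{osculation property} for $X\in\mathcal J^{(i+1)}$ gives $[X,C]\in\mathcal J^{(i)}=V_i\oplus\mathcal C$. Write $[X,C]=X'+fC$ with $X'\in V_i$. Then $[X',Y]\in\mathcal J^{(1-i)}$ by the induction hypothesis, and $[fC,Y]=f[C,Y]-Y(f)C$ lies in $\mathcal J^{(-i)}$. This proves (2) for $i+1$; the point needing care is that the identity is tensorial enough for function coefficients to cause no trouble.

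Second, involutivity of $V_{i+1}$. Take $X_1,X_2\in V_{i+1}$. Then $[X_1,X_2]$ is vertical, since $\mathcal V$ is involutive. Apply the osculation characterization: $[X_1,X_2]\in\mathcal J^{(i)}$ by involutivity of $V_i$, and
\[
[[X_1,X_2],C]=[X_1,[X_2,C]]-[X_2,[X_1,C]].
\]
Here $[X_j,C]\in V_i\oplus\mathcal C$. Then $[X_1,V_i]\subseteq V_i$ by (1) for $i$. The remaining term $[X_1,fC]=f[X_1,C]+X_1(f)C$ lies in $\mathcal J^{(i)}$ again. Hence $[[X_1,X_2],C]\in\mathcal J^{(i)}$, so $[X_1,X_2]\in\mathcal J^{(i+1)}\cap\mathcal V=V_{i+1}$.

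Alternatively, one could prove (1) via skew-orthogonality, since $\mathcal J^{(i+1)}=(\mathcal J^{(-i)})^\angle$. This route would use (2) and the closedness of $\sigma$: for $X_1,X_2\in V_{i+1}$ and $Z\in\mathcal J^{(-i)}$, expanding $d\sigma(X_1,X_2,Z)=0$ shows $\sigma([X_1,X_2],Z)=0$. The expansion needs $\sigma(X_j,Z)=0$ and $[X_j,Z]\in\mathcal J^{(-i)}$, the latter being (2) for $i+1$, which is why (2) is established first.

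\emph{Main obstacle.} The delicate step is using \eqref{osculation property} correctly. It is a pointwise characterization, so one must work on $\mathcal R_D$, where all ranks are constant, so that sections of $V_i$ and $\mathcal J^{(i)}$ exist locally and the bracket conditions make sense. One must also keep track of the $\mathcal C$-component $fC$ of $[X,C]$. It is harmless only because $[\mathcal C,\cdot]$ maps each $\mathcal J^{(j)}$ into $\mathcal J^{(j-1)}$, which is also what keeps $[C,Y]$ inside $\mathcal J^{(-i)}$.
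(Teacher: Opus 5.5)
Your proposal is correct. It follows essentially the argument the paper intends. The paper omits the proof, citing Lemma 2 of \cite{doubrov2009london}, and points to exactly your two base facts: involutivity of $\mathcal V$ and $[\mathcal V,\mathcal J^{(0)}]\subseteq\mathcal J^{(0)}$. From these you run an induction on $i$ using the Jacobi identity, the osculation property \eqref{osculation property}, and the splitting $\mathcal J^{(i)}=V_i\oplus\mathcal C$. Your handling of the $\mathcal C$-components $fC$ is sound, and so is your observation that the main route to (1) needs only the osculation property and the inductive hypothesis (1), not (2).
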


The following weaker corollary will later be useful and can be demonstrated by applying the involutivity conditions and the decomposition \eqref{J(i) = V_i + C}.

\begin{corollary}
    \label{Corollary: Involutivity}
    For each $1-m\leq i,j  \leq m$ satisfying $i+j\geq 1$, we have that
    \begin{equation}
        [\mathcal{J}^{(i)},\mathcal{J}^{(j)}] \subseteq \mathcal{J}^{(\min\{i,j\}-1)}
    \end{equation}
\end{corollary}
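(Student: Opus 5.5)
We may assume $i\le j$, since the statement is symmetric in $i$ and $j$ and $\min\{i,j\}=i$. Then $j\geq 1$: otherwise $i\le j\le 0$ would give $i+j\le 0$. By \eqref{J(i) = V_i + C} we can therefore write $\mathcal{J}^{(j)}=V_j\oplus\mathcal{C}$. We then bound $[\mathcal{J}^{(i)},\mathcal{C}]$ and $[\mathcal{J}^{(i)},V_j]$ separately, showing that each lies in $\mathcal{J}^{(i-1)}$.

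This suffices because all the objects are distributions, hence modules over smooth functions. For local sections $X$ of $\mathcal{J}^{(i)}$ and $Y$ of $\mathcal{J}^{(j)}$ we have
\[
[fX,gY]=fg[X,Y]+fX(g)\,Y-gY(f)\,X ,
\]
and the extra terms lie in $\mathcal{J}^{(i)}+\mathcal{J}^{(j)}\subseteq\mathcal{J}^{(i-1)}$, using that the flag \eqref{Char flag} is decreasing in the index and $j\ge i$. So it is enough to check the inclusion on spanning sections, which is what the two-term split does.

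\emph{The term $[\mathcal{J}^{(i)},\mathcal{C}]$.} If $i\le 0$, then $\mathcal{J}^{(i-1)}=\mathcal{J}^{(i)}+[\mathcal{C},\mathcal{J}^{(i)}]$ by \eqref{J(-i) def}. This applies because $i\ge 1-m$, so the index $i-1\geq -m$ stays within the flag. If $i\ge 1$, the osculation property \eqref{osculation property} says that every section $Y$ of $\mathcal{J}^{(i)}$ satisfies $[Y,\mathcal{C}]\subseteq\mathcal{J}^{(i-1)}$. In both cases $[\mathcal{J}^{(i)},\mathcal{C}]\subseteq \mathcal{J}^{(i-1)}$.

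\emph{The term $[\mathcal{J}^{(i)},V_j]$.} Here we split into two cases according to the sign of $i$.
\begin{enumerate}
\item If $i\le 0$, set $k=1-i$, so $1\le k\le m$. The hypothesis $i+j\ge1$ means $j\ge k$, hence $V_j\subseteq V_k$, since the $V_\ell$ decrease in $\ell$. Lemma \ref{Lemma: invol cond}(2) with index $k$ then gives
\[
[V_j,\mathcal{J}^{(i)}]\subseteq[V_k,\mathcal{J}^{(1-k)}]\subseteq\mathcal{J}^{(i)}\subseteq\mathcal{J}^{(i-1)}.
\]
\item If $i\ge 1$, write $\mathcal{J}^{(i)}=V_i\oplus\mathcal{C}$. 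Since $j\ge i$ we have $V_j\subseteq V_i$, and Lemma \ref{Lemma: invol cond}(1) gives $[V_i,V_j]\subseteq V_i\subseteq\mathcal{J}^{(i-1)}$. The remaining piece $[\mathcal{C},V_j]\subseteq[\mathcal{C},\mathcal{J}^{(j)}]$ lies in $\mathcal{J}^{(j-1)}$ by the previous step applied with $j$ in place of $i$, and $\mathcal{J}^{(j-1)}\subseteq\mathcal{J}^{(i-1)}$.
\end{enumerate}

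Combining the two terms gives $[\mathcal{J}^{(i)},\mathcal{J}^{(j)}]\subseteq\mathcal{J}^{(i-1)}=\mathcal{J}^{(\min\{i,j\}-1)}$. The only delicate point is the index bookkeeping: checking that $1-i$ lands in the range $1\le k\le m+1$ where Lemma \ref{Lemma: invol cond}(2) applies, and that $i-1\ge -m$ so that $\mathcal{J}^{(i-1)}$ is defined. This is exactly where the hypotheses $i+j\ge 1$ and $1-m\le i,j\le m$ are used.
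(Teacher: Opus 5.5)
Your proof is correct and follows the route the paper indicates. The paper only says the corollary follows from the involutivity conditions of Lemma \ref{Lemma: invol cond} and the decomposition $\mathcal{J}^{(i)}=V_i\oplus\mathcal{C}$; you fill in the details, including the brackets with $\mathcal{C}$ via \eqref{J(-i) def} and \eqref{osculation property}. The one step that deserves a word is using \eqref{osculation property}, which is a pointwise characterization, as a statement about every section of $\mathcal{J}^{(i)}$. This is justified because $Y\mapsto[Y,\mathcal{C}]_\lambda \bmod \mathcal{J}^{(i-1)}(\lambda)$ is tensorial on sections $Y$ of $\mathcal{J}^{(i-1)}$.
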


% In section \ref{section: dim 6-7}, we will demonstrate that all rank 3 distributions with $6$-dimensional square in dimensions $6$ and $7$ are of maximal class at a generic point of the base manifold. On the other hand, in section \ref{section: dim 8}, we will provide an example of a rank 3 distribution with $6$-dimensional square on an $8$ dimensional manifold which is not of maximal class. 

\subsection{The Jacobi Symbol}
\label{subsection: Jacobi Symbol}

We now introduce a skew Young diagram associated to $D$ called the \textit{Jacobi symbol}. Consider the tuple of natural numbers $T=(t_{-m},t_{1-m},\ldots, t_{m-1}, t_m)$, where
\begin{equation}
\label{t_i}
    t_i = \mathrm{dim}\ \mathcal{J}^{(i)} - \mathrm{dim}\ \mathcal{J}^{(i+1)}
\end{equation}

 Using the definition of $\mathcal{J}^{(-1)}$ and the fact that $\mathcal{J}^{(0)}= \pi^*D$, we have that $t_0 = 2$. Further, it is demonstrated in \cite[Proposition 2]{doubrov2008rank3} that $t_1=2$ precisely if $\mathrm{dim}\ D^2 = 6$. It is also easy to see from the definition of $\mathcal{J}^{(-i)}$ that $t_{i+1}\leq t_i$ and $t_{-i} = t_i$ for each $i>0$. Therefore, $T$ is a symmetric tuple of the form
 \begin{equation}
    \big(
    \overbrace{1,1,\ldots,1}^{\ell},
    \overbrace{2,2,\ldots,2}^{2k+1},
    \overbrace{1,1,\ldots,1}^{\ell}
    \big)
 \end{equation}
 where $k\geq 1$ and $\ell\geq 0$. \footnote{Our convention for the integer $k$ in the pair $(k,\ell)$ defining a skew Young diagram is one less than that in \cite{doubrov2008rank3}. We choose this notation to simplify later indexing.} 
 % \IZ{It is not so essential but $k$ in our 2008 paper with Boris \cite{doubrov2008rank3} in the description of the Young diagram we use was $1$ larger than the $k$ here. For the sake of consistency and for the convenience of a reader who would like to examine also this paper \cite{doubrov2008rank3}, I would modify this accordingly or at least mention it as a footnote.} 
 
 The tuple $T$ can be equivalently written either as the pair of integers $(k,\ell)$ or as the \emph{skew Young diagram associated to $D$}
 \begin{equation}
 \label{Jacobi Symb Diagram}
\begin{tikzpicture}[baseline=(current bounding box.center)]
    % Place the ytableau in a node
    \node (table) {
        \begin{ytableau}
            *(white) & {\cdots} & *(white) & *(white) & {\cdots} & *(white) & \none & \none & \none \\
            \none & \none & \none & *(white) & {\cdots} & *(white) & *(white) & {\cdots} & *(white) \\
        \end{ytableau}
    };

    % Brackets and labels over the columns
    % First \ell over first 3 columns
    \draw[decorate,decoration={brace,amplitude=5pt},yshift=1.2cm]
        ([xshift=0.1cm]table.north west) -- ++(1.6cm,0) node[midway,above=6pt] {\(\ell\)};

    % 2k+1 over middle 3 columns
    \draw[decorate,decoration={brace,amplitude=5pt},yshift=1.2cm]
        ([xshift=1.8cm]table.north west) -- ++(1.6cm,0) node[midway,above=6pt] {\(2k+1\)};

    % Second \ell over last 3 columns
    \draw[decorate,decoration={brace,amplitude=5pt},yshift=1.2cm]
        ([xshift=3.5cm]table.north west) -- ++(1.6cm,0) node[midway,above=6pt] {\(\ell\)};

    % Bottom brace spanning entire diagram
    \draw[decorate,decoration={brace,amplitude=5pt,mirror},yshift=-1.2cm]
        ([xshift=0.1cm]table.south west) -- ++(5.0cm,0)
        node[midway,below=6pt] {\(2m+1\)};
    \alttext{A two-row skew Young diagram: a top row of a length ell block then a length 2k
plus 1 block, over a bottom row offset right of a length 2k plus 1 block then a
length ell block, overlapping in a central 2k plus 1 columns. Braces above mark
ell, 2k plus 1, and ell; a brace below marks the total width, 2m plus 1.}
\end{tikzpicture}
\end{equation}
This skew Young diagram is also known as the \emph{Jacobi symbol} associated to $D$. Note that the Jacobi symbol of $D$ is locally constant on $M$. 

Observe that $\mathrm{dim}(\mathcal{R}_D) = 2n-4$ and that $m=k+\ell$. Because $\mathcal{J}^{(-k-\ell)}$ is contained in the even contact distribution $\xi$, we have that 
\begin{equation}
    \mathrm{dim}\ \mathcal{J}^{(-k-\ell)} = \mathrm{dim}\ \mathcal{J}^{(0)}+2k+\ell = n-1+2k+\ell \leq 2n-5
\end{equation}
or equivalently that \begin{equation}
    \label{num_of_boxes}
    2k+\ell\leq n-4,
\end{equation}where equality obtains precisely when $D$ is of maximal class and precisely when the skew Young diagram \eqref{Jacobi Symb Diagram} has $2n-6$ boxes.

The significance of the Jacobi symbol can be described as follows: The leaf space for the foliation of $\mathcal{R}_D$ by abnormal extremals is equipped with the natural contact structure. By pushing flag of distributions $\{\mathcal{J}^{(i)}\}$ forward to this leaf space one obtains at each fiber of this contact distribution a curve of symplectic flags, i.e. flags consisting of isotropic and coisotropic subspaces which are stable under the operation of skew-symmetric complement. The Jacobi symbol is the principal invariant of this curve up to the action of the conformal symplectic group on the fibers of the contact distribution \cite{Doubrov2012curves} \cite{doubrov2016JacobiCurves}. 

For our purposes, the essential facts about the skew Young diagram are captured by the following proposition, which is proven in \cite{doubrov2016JacobiCurves}

\begin{prop}
\label{Prop: JacobiFrame}
    Let $D$ be a $(3,n)$ distribution on $M$ with constant Jacobi symbol $(k,\ell)$. Fix a nonvanishing local section $x$ of the line distribution $\mathcal{C}$ near $\lambda$. One can choose a tuple of locally defined vector fields $\big(\{e_i\}_{i=-k}^{m},\{f_j\}_{j=-m}^k,\eta\big)$ on $\mathcal{R}_D$ which satisfy the following properties:
    \begin{enumerate}
        \item For each $-k-\ell\leq i \leq m$, 
        \[
            \mathcal J^{(i)} = \begin{cases}
                \mathcal J^{(1+i)}+\mathrm{span}(\{e_i\})& \text{if } k < i
                \\
                \mathcal J^{(1+i)}+\mathrm{span}(\{e_i,f_i\}) & \text{if }-k\leq i \leq k
                \\
                \mathcal J^{(1+i)}+\mathrm{span}(\{f_i\})& \text{if } i < -k
            \end{cases}
        \]
        \item For each $-k\leq i<m$ and each $-m\leq j<k$,
        \[
            e_i = [x,e_{i+1}] \mod \langle x\rangle \quad \text{and}\quad f_j = [x,f_{j+1}] \mod \langle x\rangle,
        \]
        and in the case that $i,j>0$, we have $e_i,f_j\in \mathcal{V}$
        \item Define $\eta = (-1)^{m}[e_{m},f_{-m}]$. The vector field $\eta$ is not contained in $\mathcal J^{(-m)}$, and for each $-k \leq i\leq m$,
        \[
            [e_i,f_{-i}] \equiv (-1)^i\eta \mod \mathcal{J}^{(-m)}
        \]
        
    \end{enumerate}
\end{prop}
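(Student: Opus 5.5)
The plan is to build the frame inductively. Start from the vertical fields and the characteristic field $x$, and generate the rest by iterated brackets with $x$. The normalization of $\eta$ then comes from the skew-symmetric pairing $\oline\sigma$ on $\xi/\mathcal C$.

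\emph{Construction of the $e$'s and $f$'s.} Work on a connected component of $\mathcal R_D$, where all $\mathcal J^{(i)}$ have constant rank and $t_i$ has the form recorded in the Jacobi symbol. The top filtrand is $\mathcal J^{(m)}=V_m\oplus\mathcal C$ with $\dim V_m=t_m$. It equals $1$ if $\ell>0$ and $2$ if $\ell=0$, in which case $m=k$. Choose a vertical $e_m$ spanning $V_m$ modulo nothing, and add $f_m$ if $\ell=0$. For $i$ from $m-1$ down to $1$, set $e_i=[x,e_{i+1}]$, with $f_i$ defined similarly once $i\le k$. Where a new direction appears at $i=k$ with $\ell>0$, choose a fresh vertical field $f_k$ in $V_k$ complementary to $V_{k+1}$.

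We must check three things about these fields.
\begin{enumerate}
\item They can be taken vertical modulo $x$. By \eqref{osculation property}, $[x,\mathcal J^{(i+1)}]\subseteq\mathcal J^{(i)}$, and since $\mathcal J^{(i)}=V_i\oplus\mathcal C$ we can subtract the $\mathcal C$-component.
\item They actually span $\mathcal J^{(i)}/\mathcal J^{(i+1)}$. This uses \eqref{osculation property} in reverse: if $[x,Y]\in\mathcal J^{(i)}$ then $Y\in\mathcal J^{(i+1)}$. Hence bracketing with $x$ induces injective maps $\mathcal J^{(i+1)}/\mathcal J^{(i+2)}\to\mathcal J^{(i)}/\mathcal J^{(i+1)}$.
\item The maps are onto for nonpositive indices. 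By the definition \eqref{J(-i) def}, $\mathcal J^{(-i)}$ is generated by brackets of $x$ with $\mathcal J^{(1-i)}$. So $e_i,f_j$ for $i,j\le 0$ are defined simply by continuing $e_i=[x,e_{i+1}]$ and $f_j=[x,f_{j+1}]$, and they span the successive quotients.
\end{enumerate}
The ranks $t_i$ dictate where each chain stops. The $e$-chain stops at $-k$ and the $f$-chain at $-m$, or symmetrically by relabeling. This gives (1) and (2).

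\emph{Normalization of $\eta$.} First, $\mathcal J^{(-m)}$ is the skew-orthogonal complement of $\mathcal J^{(m+1)}=\mathcal C$ only in the maximal-class case. In general $\mathcal J^{(-m)}\subsetneq\xi$, and $\mathcal J^{(-m)}$ has skew-orthogonal complement $\mathcal J^{(m+1)}$ in the appropriate sense. Pick any field $\eta'$ transverse to $\xi$. For $Y,Z\in\xi$, the bracket $[Y,Z]$ modulo $\xi$ equals $-\oline\sigma(Y,Z)$ times a representative of $T/\xi$, by the formula $d s(Y,Z)=-s([Y,Z])$ for sections of $\ker s$.

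Next, use the standard identity obtained by differentiating along $x$, which preserves $\oline\sigma$ up to conformal factor because $x$ is a Cauchy characteristic:
\[
\oline\sigma([x,Y],Z)+\oline\sigma(Y,[x,Z])\equiv c\,\oline\sigma(Y,Z).
\]
This shows that $\oline\sigma(e_i,f_{-i})$ equals $(-1)^{i-m}\oline\sigma(e_m,f_{-m})$ plus terms that vanish. The extra terms involve pairs already in skew-orthogonal position, namely $\mathcal J^{(a)}$ with $\mathcal J^{(b)}$ for $a+b\ge1$, since $\mathcal J^{(b)}\subseteq(\mathcal J^{(1-b)})^{\angle}$. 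The conformal factor can be absorbed by rescaling $x$, or it only contributes such vanishing terms.

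It remains to see that $\oline\sigma(e_m,f_{-m})\neq0$. Otherwise $e_m$ would be skew-orthogonal to all of $\mathcal J^{(-m)}$, including $f_{-m}$, and to all of $\mathcal J^{(1-m)}$ by the definition of $\mathcal J^{(m)}$. It would then lie in $\mathcal J^{(-m)\angle}$. That space contains $\mathcal J^{(m+1)}$, and it equals $\mathcal J^{(m+1)}$ because the flag is self-dual, $t_{-i}=t_i$. This contradicts $e_m\notin\mathcal J^{(m+1)}$.

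Hence $[e_i,f_{-i}]\equiv(-1)^i\eta$ modulo $\xi$. We have defined $\eta=(-1)^m[e_m,f_{-m}]$, and $\eta\notin\xi\supseteq\mathcal J^{(-m)}$. The congruence modulo $\xi$ must then be upgraded to a congruence modulo $\mathcal J^{(-m)}$. This is the main obstacle, since in nonmaximal class $\xi/\mathcal J^{(-m)}$ is nontrivial. The first approach I would try is to use Corollary \ref{Corollary: Involutivity} together with the Jacobi identity $[e_i,f_{-i}]=[e_i,[x,f_{1-i}]]$, expanded as $[[e_i,x],f_{1-i}]+[x,[e_i,f_{1-i}]]$. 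Inductively, $[e_i,f_{1-i}]\in\mathcal J^{(-m)}$ modulo corrections, and $[x,\mathcal J^{(-m)}]\subseteq\mathcal J^{(-m)}$ by stabilization. This gives the recursion $[e_i,f_{-i}]\equiv-[e_{i+1},f_{-i-1}]$ modulo $\mathcal J^{(-m)}$, which yields (3) with the sign $(-1)^i$.
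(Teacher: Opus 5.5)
The paper does not prove this proposition itself. It imports the statement from \cite{doubrov2016JacobiCurves}, where the frame comes from the theory of curves of symplectic flags: the Jacobi flag is pushed to the leaf space of the abnormal foliation, and its symbol is the Jacobi symbol. You instead build the frame by hand inside the paper's framework, and your decisive step is sound.
\begin{itemize}
\item The indices of $e_i$ and $f_{1-i}$ sum to $1$, so Corollary~\ref{Corollary: Involutivity} gives $[e_i,f_{1-i}]\in\mathcal J^{(\min\{i,1-i\}-1)}\subseteq\mathcal J^{(-m)}$ directly; no induction is needed.
\item Together with $[x,\mathcal J^{(-m)}]\subseteq\mathcal J^{(-m)}$, and the fact that all $\langle x\rangle$-corrections also land in $\mathcal J^{(-m)}$, this gives $[e_i,f_{-i}]\equiv-[e_{i-1},f_{1-i}]\bmod\mathcal J^{(-m)}$ for $1-k\le i\le m$.
\item Item (3) then follows from the definition of $\eta$.
\end{itemize}
This is the same device the paper uses for Lemma~\ref{Lemma: [e_i,e_-i]}. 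Your route is self-contained and shows why the congruence holds modulo $\mathcal J^{(-m)}$ rather than merely modulo $\xi$, which is the distinction that matters in nonmaximal class. The citation instead explains where the skew Young diagram comes from and covers arbitrary rank. Once you have the recursion, your $\oline\sigma$-computation modulo $\xi$ is needed only to show $\eta\notin\mathcal J^{(-m)}$.

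One step of the construction does not go through as written. Put $Q_j=\mathcal J^{(j)}/\mathcal J^{(j+1)}$. When $\ell>0$, $\mathrm{ad}\,x\colon Q_{-k}\to Q_{-k-1}$ maps a plane onto a line. Item (1) at $i=-k-1$ requires $f_{-k}$ to lie outside its kernel.

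The ranks $t_i$ do not tell you which line that kernel is. ``Relabeling'' is not available either: $e_{-k}=(\mathrm{ad}\,x)^{m+k}e_m$ is pinned by the top of the diagram, while $f_{-k}=(\mathrm{ad}\,x)^{2k}f_k$ for a freely chosen $f_k$.

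The missing ingredient is a duality. Fix a representative of $\oline\sigma$. For sections $Y$ of $\mathcal J^{(j+1)}$ and $Z$ of $\mathcal J^{(-j)}$ one has exactly $\oline\sigma([x,Y],Z)=-\oline\sigma(Y,[x,Z])$. This is the correct form of your ``standard identity'': both the term $x(\oline\sigma(Y,Z))$ and the conformal term vanish because $\oline\sigma(Y,Z)\equiv0$.

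Consequently $\mathrm{ad}\,x\colon Q_{-k}\to Q_{-k-1}$ is minus the transpose of $\mathrm{ad}\,x\colon Q_{k+1}\to Q_k$ with respect to the nondegenerate pairings $Q_j\times Q_{-j}\to\mathbb R$. Its kernel is therefore the annihilator of $e_k$. That annihilator contains $e_{-k}$, because $\oline\sigma(e_k,e_{-k})=(-1)^k\oline\sigma(e_0,e_0)=0$. So any complementary $f_k$ works, and the $e$-chain is the one that terminates.

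There are two smaller slips.
\begin{itemize}
\item In nonmaximal class, $\mathcal J^{(m+1)}=V_{m+1}\oplus\mathcal C$ with $V_{m+1}\neq0$. Hence $\dim V_m=t_m+\dim V_{m+1}$, and $e_m$ (and $f_m$ when $\ell=0$) must be chosen to span $V_m$ modulo $V_{m+1}$.
\item When $\ell=0$, $\mathcal J^{(-m)}=\mathcal J^{(1-m)}+\langle e_{-m},f_{-m}\rangle$, so your argument for $\oline\sigma(e_m,f_{-m})\neq0$ also needs $\oline\sigma(e_m,e_{-m})=0$. A simpler argument covers all cases: $\oline\sigma(e_m,f_{-m})=(-1)^m\oline\sigma(e_0,f_0)\neq0$, since $\oline\sigma$ is nondegenerate on the plane $Q_0$ because $\mathcal J^{(1)}=(\mathcal J^{(0)})^{\angle}$.
\end{itemize}
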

The contents of this Proposition are well-summarized by decorating the Jacobi symbol as follows:

% \begin{figure}[H]
\begin{equation}
    \label{figure: decorated young}
    \begin{tikzpicture}[baseline=(current bounding box.center)]
        % Place the ytableau in a node
        \node (table) {
            \ytableausetup{boxsize=2.5em}
            \begin{ytableau}
                \scriptstyle e_{m} & {\cdots} & \scriptstyle e_{k+1} & \scriptstyle e_{k} & {\cdots} & \scriptstyle e_{-k} & \none & \none & \none \\
                \none & \none & \none & \scriptstyle f_{k} & {\cdots} & \scriptstyle f_{-k} & \scriptstyle f_{-k-1} & {\cdots} & \scriptstyle f_{-m} \\
            \end{ytableau}
        };
    
        % % Brackets and labels over the columns
        % % First \ell over first 3 columns
        % \draw[decorate,decoration={brace,amplitude=5pt},yshift=1.2cm]
        %     ([xshift=0.3cm]table.north west) -- ++(2.5cm,0) node[midway,above=6pt] {\(\ell\)};
    
        % % 2k+1 over middle 3 columns
        % \draw[decorate,decoration={brace,amplitude=5pt},yshift=1.2cm]
        %     ([xshift=3cm]table.north west) -- ++(2.5cm,0) node[midway,above=6pt] {\(2k+1\)};
    
        % % Second \ell over last 3 columns
        % \draw[decorate,decoration={brace,amplitude=5pt},yshift=1.2cm]
        %     ([xshift=5.7cm]table.north west) -- ++(2.5cm,0) node[midway,above=6pt] {\(\ell\)};
    \alttext{A two-row skew Young diagram (decorated Jacobi symbol): a top row of boxes e sub m down to e sub minus k over a bottom row f sub k down to f sub minus m, offset so the rows overlap in a central 2k+1 column block (e sub i above f sub i) and the full width is 2m+1 columns.}
    \end{tikzpicture}
\end{equation}

% \end{figure}

The first and second items imply that for each $0\leq r$, the subspace $V_r$ defined by \eqref{def V_i} can be written as
\[
    V_r = V_{m+1}\oplus \mathrm{span}\big(\{e_i:r\leq i\leq m\}\cup \{f_j: r\leq j\leq k\}\big).
\]

\subsection{The Jacobi-Tanaka Algebra}
Although the Jacobi symbol is the basic invariant of the filtered manifold $\mathcal R_D'$ obtained from symplectification, a finer invariant called the \textit{Jacobi-Tanaka algebra} can be defined, which is simply the Tanaka symbol of the filtration $\{\mathcal{J}^{(-i)}\}$ after quotienting by a common Cauchy characteristic and reindexing. 

The Jacobi symbol can be recovered by truncating the Jacobi-Tanaka algebra; that is, by quotienting the algebra by elements of graded weight at most $-2m-3$. For distributions of maximal class, the two invariants agree, but for distributions of nonmaximal class, the Jacobi-Tanaka algebra cannot be recovered from the Jacobi symbol alone.

\subsubsection{Reindexing the Jacobi Flag}
Recall that $m=k+\ell$. Our choice of indexing for the flag $\{\mathcal{J}^{i}\}_{i=-m}^m$ is convenient for the statement of the involutivity conditions in Lemma \ref{Lemma: invol cond}. However, in order to obtain a filtration which respects the bracket, we also define
\begin{equation}
    \label{F def 1}
    \mathcal{F}^{-i} = \mathcal{J}^{(m+1-i)} = \mathcal{J}^{(k+\ell+1-i)}\quad\text{for each }1\leq i \leq \ell+1
\end{equation}
Extending this filtration for $\ell+1<i$ by
\begin{equation}
    \label{F def 2}
    \mathcal{F}^{-i} = \mathcal{F}^{1-i}+\sum_{j=1}^{i-1} [\mathcal{F}^{-j},\mathcal{F}^{j-i}]
\end{equation}
We obtain a filtration $\{\mathcal{F}^{-i}\}_{i\in \mathbb{N}}$ on $\mathcal{R}_D$ which respects brackets; that is,
\begin{equation}
    \label{bracket-resp cond}
    [\mathcal{F}^{-i},\mathcal{F}^{-j}]\subseteq \mathcal{F}^{-i-j}\quad\text{for all }i,j\in \mathbb{N}.
\end{equation}
This can be shown using the involutivity conditions of Lemma \ref{Lemma: invol cond}.

Now let $\mathcal{R}_D'$ be the generic subset of $\mathcal{R}_D$ where each of the filtrands $\{\mathcal{F}^{-i}\}$ has locally constant rank. The following proposition illuminates the structure of the filtration $\{\mathcal F^{-i}\}_{i>0}$ and its relation to the Jacobi symbol. Recall that $m = k+ \ell$.
\begin{prop}
    \label{Prop: Technical}
    For each $\lambda\in\mathcal R_D'$, the following hold:
    \begin{enumerate}[label=(\roman*)]
        \item $\mathcal{F}^{-i}(\lambda) = \mathcal{J}^{(m+1-i)}(\lambda)$ for each $1\leq i\leq 2m+1$
        \item $\mathcal{F}^{-2m-2}(\lambda) = \mathcal{F}^{-2m-1}(\lambda)\oplus \big\langle \eta(\lambda)\big\rangle$
        \item $[V_{m+1},\mathcal{F}^{-i}](\lambda)\subseteq \mathcal{F}^{-i}(\lambda)$ for each $i\in\mathbb N$
    \end{enumerate}
    where $\eta$ is a (locally defined) vector field as in Proposition \ref{Prop: JacobiFrame}.
\end{prop}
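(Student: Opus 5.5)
We argue by induction on $i$, with part (i) as the main content; parts (ii) and (iii) then follow from (i) combined with Proposition \ref{Prop: JacobiFrame} and Lemma \ref{Lemma: invol cond}. For $1\le i\le \ell+1$, part (i) holds by the definition \eqref{F def 1}. For $\ell+1< i\le 2m+1$ we show two inclusions: $\mathcal J^{(m+1-i)}\subseteq \mathcal F^{-i}$ and $\mathcal F^{-i}\subseteq\mathcal J^{(m+1-i)}$.

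The first inclusion comes from the term $j=1$ in \eqref{F def 2}. Assume inductively that $\mathcal F^{1-i}=\mathcal J^{(m+2-i)}$, and note $\mathcal F^{-1}=\mathcal J^{(m)}\ni x$. Then $\mathcal F^{-i}\supseteq \mathcal J^{(m+2-i)}+[\mathcal C,\mathcal J^{(m+2-i)}]$. When $m+1-i\le 0$, this sum equals $\mathcal J^{(m+1-i)}$ by the definition \eqref{J(-i) def}. When $m+1-i\ge 1$, we use item (2) of Proposition \ref{Prop: JacobiFrame} instead: $e_{r}\equiv[x,e_{r+1}]$ and $f_{r}\equiv[x,f_{r+1}]$ modulo $\langle x\rangle$. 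Together with item (1), this shows the bracket with $x$ produces exactly the new frame vectors of $\mathcal J^{(m+1-i)}$.

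For the reverse inclusion, take a term $[\mathcal F^{-j},\mathcal F^{j-i}]$ with $1\le j\le i-1$. By induction it equals $[\mathcal J^{(m+1-j)},\mathcal J^{(m+1-i+j)}]$. The indices $a=m+1-j$ and $b=m+1-i+j$ satisfy $a+b=2m+2-i\ge 1$ precisely when $i\le 2m+1$. So Corollary \ref{Corollary: Involutivity} gives containment in $\mathcal J^{(\min\{a,b\}-1)}$. Since $\min\{a,b\}\ge m+2-i$, this lies in $\mathcal J^{(m+1-i)}$. The endpoint cases, where an index equals $m+1$ or falls outside the range of the Corollary, are handled separately. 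Index $m+1$ corresponds to $\mathcal J^{(m+1)}=V_{m+1}\oplus\mathcal C$, and for it we use Lemma \ref{Lemma: invol cond}(2) together with $[\mathcal C,\mathcal J^{(r)}]\subseteq\mathcal J^{(r-1)}$ from \eqref{osculation property}. I expect this bookkeeping at the boundary indices to be the main nuisance, though not a real obstacle.

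For part (ii), apply \eqref{F def 2} with $i=2m+2$. The pairs now satisfy $a+b=0$, so the Corollary no longer applies. Instead we reduce modulo $\mathcal F^{-2m-1}=\mathcal J^{(-m)}$ using the frame. Each $\mathcal J^{(a)}$ is spanned by $\mathcal J^{(a+1)}$ together with $e_a,f_a$. Terms $[\mathcal J^{(a+1)},\mathcal J^{(-a)}]$ satisfy $a+1-a=1\ge 1$, so the Corollary places them inside $\mathcal J^{(-m)}$ (with the boundary case treated as above). Hence only brackets of the top new vectors matter: $[e_a,e_{-a}]$, $[e_a,f_{-a}]$, $[f_a,e_{-a}]$, $[f_a,f_{-a}]$. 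By Proposition \ref{Prop: JacobiFrame}(3), $[e_a,f_{-a}]\equiv\pm\eta$. The remaining brackets should vanish modulo $\mathcal J^{(-m)}$ by skew-orthogonality. Concretely, $\oline\sigma$ restricted to $\xi/\mathcal J^{(-m)}$ pairs the graded pieces, and the bracket modulo $\mathcal J^{(-m)}$ is computed via $\oline s([Y,Z])=-d\oline s(Y,Z)$, the standard even-contact argument. Since $\eta\notin\mathcal J^{(-m)}$, we obtain $\mathcal F^{-2m-2}=\mathcal F^{-2m-1}\oplus\langle\eta\rangle$. If the contact-form argument proves awkward, I would fall back on iterating $[x,\cdot]$ and the Jacobi identity, as in \cite{doubrov2016JacobiCurves}.

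Part (iii) is proved by induction on $i$. For $i\le \ell+1$ we have $\mathcal F^{-i}=\mathcal J^{(m+1-i)}$. Lemma \ref{Lemma: invol cond}(2) with $i=m+1$ gives $[V_{m+1},\mathcal J^{(-m)}]\subseteq\mathcal J^{(-m)}$. For the intermediate $\mathcal J^{(r)}$ we apply the same lemma with the appropriate $V_s\supseteq V_{m+1}$, or argue via \eqref{osculation property}. For larger $i$, the Jacobi identity gives
\[
[v,[Y,Z]]=[[v,Y],Z]+[Y,[v,Z]],
\]
which propagates invariance through the recursive definition \eqref{F def 2}.
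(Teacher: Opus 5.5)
Your treatment of (i) and (iii) is essentially the paper's route, but part (ii) has a genuine gap. It lies at the step where you dispose of $[e_a,e_{-a}]$ and $[f_a,f_{-a}]$ ``by skew-orthogonality.''

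\textbf{Why the contact-form argument fails.} For $Y,Z\in\Gamma(\xi)$, the identity $\oline s([Y,Z])=-d\oline s(Y,Z)$ determines $[Y,Z]$ only modulo $\xi=\ker\oline s$, not modulo $\mathcal J^{(-m)}$.
\begin{itemize}
\item $\mathcal J^{(-m)}$ has codimension $n-4-2k-\ell=\operatorname{rank}V_{m+1}$ in $\xi$. This is positive exactly when $D$ is not of maximal class.
\item That is the only case in which (ii) says anything. For maximal class, $\mathcal J^{(-m)}=\xi$ and $T\mathcal R_D=\mathcal J^{(-m)}\oplus\langle\eta\rangle$, so (ii) is automatic.
\item The component your computation detects is the harmless one. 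We have $\eta=\pm[e_m,f_{-m}]$ with $\oline\sigma(e_m,f_{-m})\neq0$; for $\ell\ge1$ this follows from $e_m\in(\mathcal J^{(1-m)})^{\angle}\setminus(\mathcal J^{(-m)})^{\angle}$ and $\mathcal J^{(-m)}=\mathcal J^{(1-m)}+\langle f_{-m}\rangle$. Hence $\eta$ is transverse to $\xi$, and $\oline s$ only measures the $\eta$-coefficient, which (ii) allows anyway.
\item The components that must be ruled out are those lying in $\xi$ but outside $\mathcal J^{(-m)}$, and $\oline s$ cannot see them. In the $(3,8)$ case of Section \ref{section: dim 8}, $\xi/\mathcal J^{(-2)}$ is a line. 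It is exactly this step that keeps that line out of $\mathcal F^{-6}$ and pushes it to degree $-7$, where it appears as $\oline\nu$.
\end{itemize}

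\textbf{What is actually needed.} You need a bracket-level statement, which is the paper's Lemma \ref{Lemma: [e_i,e_-i]}. It is proved by induction on $i$ starting from $[e_0,e_0]=0$:
\begin{itemize}
\item Write $e_{-i-1}\equiv[x,e_{-i}]$ modulo $\mathcal C$ and expand $[e_{i+1},[x,e_{-i}]]$ by the Jacobi identity.
\item Use $[e_{i+1},x]\equiv-e_i$ modulo $\mathcal C$. The terms coming from the $\mathcal C$-ambiguities lie in $\mathcal J^{(-m)}$.
\item Use Lemma \ref{Lemma: invol cond}(2) to get $[e_{i+1},e_{-i}]\in\mathcal J^{(-i)}$, so $[x,[e_{i+1},e_{-i}]]\in\mathcal J^{(-i-1)}\subseteq\mathcal J^{(-m)}$.
\item This gives $[e_{i+1},e_{-i-1}]\equiv-[e_i,e_{-i}]$ modulo $\mathcal J^{(-m)}$, and likewise for the $f$'s.
\end{itemize}
This is the argument you list only as a fallback. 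It is not optional: it carries the entire content of (ii).

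\textbf{Parts (i) and (iii).} Your (i) uses Corollary \ref{Corollary: Involutivity} as the paper does, and is if anything more careful. You use the full sum in \eqref{F def 2}, and you invoke Proposition \ref{Prop: JacobiFrame}(2) for the forward inclusion when $m+1-i\ge1$. No boundary indices actually occur there: both indices lie in $[1-m,m]$. Your (iii) uses the Jacobi identity, as the paper does, but the base case needs one correction. For $i\le\ell+1$ you have $\mathcal F^{-i}=\mathcal J^{(r)}$ with $r\ge k\ge1$, where Lemma \ref{Lemma: invol cond}(2) does not apply. Use instead the involutivity of $V_r\supseteq V_{m+1}$, together with $[V_{m+1},\mathcal C]\subseteq\mathcal J^{(m)}$, which follows from \eqref{osculation property}.
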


Let us prove each part of the proposition above in sequence.
\begin{proof}[Proof of (i)]
    For each $1\leq i\leq \ell+1$, the claim holds by the definition of $\mathcal{F}^{-i}$ given in \eqref{F def 1}. For higher $i$, we use induction. Fix some $\ell+1\leq a \leq 2m$, and assume the conclusion holds for each $i\leq a$. 

    By Remark \ref{Remark: generators of m}, we can write
    \begin{gather}
        \label{F-a-1}
        \mathcal{F}^{-a-1} = \mathcal{F}^{-a} + \sum_{j=1}^{\ell+1}[\mathcal{F}^{-j},\mathcal{F}^{-a-1+j}] 
        =
        \mathcal{J}^{(m+1-a)} + \sum_{j=1}^{\ell+1}[\mathcal{J}^{(m+1-j)},\mathcal{J}^{(m-a+j)}]
    \end{gather}
    Because $a\leq 2m$, we have $(m+1-j) + (m-a+j) = 2m+1-a\geq 1$, so that Corollary \ref{Corollary: Involutivity} can be applied to obtain for each $j$
    \begin{equation}
        \label{J summand bracket}
        [\mathcal J^{(m+1-j)},\mathcal J^{(m-a+j)}]\subseteq \mathcal J^{(\min\{m+1-j,\ m-a+j\}-1)}
    \end{equation}
    Considering only $1\leq j \leq \ell+1$ as in the summation and recalling that $\ell+1\leq a$, we have $m+1-j\geq m-\ell\geq m-a+1.$ Similarly, since $j\geq 1$, $m-a+j\geq m-a+1$. Consequently, \eqref{J summand bracket} is contained in $\mathcal J^{(m-a)}$, and \eqref{F-a-1} gives
    \begin{equation}
        \mathcal F^{-a-1}\subseteq \mathcal J^{(m+1-a)}+ \mathcal J^{(m-a)}\subseteq \mathcal J^{(m-a)}
    \end{equation}
    as desired. The opposite inclusion follows from the definition \eqref{J(-i) def} and the fact that $\mathcal{C}\subseteq \mathcal{J}^{(m+1-j)}$ for each $1\leq j \leq a$.
\end{proof}

In order to prove (ii), we need the following lemma.

\begin{lemma}
    \label{Lemma: [e_i,e_-i]}
    Let $\big(\{e_i\}_{i=-k}^{k+\ell},\{f_j\}_{j=-k-\ell}^k,\eta\big)$ be locally defined vector fields as in Proposition \ref{Prop: JacobiFrame}. For each $0\leq i \leq k$,
    \begin{gather}
        [e_{i},e_{-i}]\in \mathcal{J}^{(-m)}\quad\text{and}\quad [f_{i},f_{-i}]\in \mathcal{J}^{(-m)}
    \end{gather}
\end{lemma}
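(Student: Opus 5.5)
We prove both inclusions by induction on $i$, downward from $i=k$, writing $e_{-i}$ and $f_{-i}$ as iterated brackets with the characteristic field $x$ and using the Jacobi identity. The same argument works for $e$'s and $f$'s, so we treat $[e_i,e_{-i}]$.

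\textbf{Case $i=0$.} Here $[e_0,e_0]=0$, so the claim holds trivially. The content of the lemma lies in $1\le i\le k$.

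\textbf{Base case $i=k$.} By Proposition \ref{Prop: JacobiFrame}(1), $e_k\in\mathcal J^{(k)}$ and $e_{-k}\in\mathcal J^{(-k)}$. Since $k+(-k)=0<1$, Corollary \ref{Corollary: Involutivity} does not apply directly, so we use Proposition \ref{Prop: JacobiFrame}(2) instead: $e_{-k}\equiv[x,e_{1-k}]\bmod\langle x\rangle$. The Jacobi identity gives
\[
[e_k,[x,e_{1-k}]]=[[e_k,x],e_{1-k}]+[x,[e_k,e_{1-k}]].
\]
The first term is $-[e_{k-1},e_{1-k}]$ modulo $[\langle x\rangle,e_{1-k}]\subseteq\mathcal J^{(-k)}$. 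In the second term, the indices of $e_k$ and $e_{1-k}$ sum to $1$, so Corollary \ref{Corollary: Involutivity} gives $[e_k,e_{1-k}]\in\mathcal J^{(-k)}$, and hence the whole term lies in $\mathcal J^{(-k-1)}\subseteq\mathcal J^{(-m)}$. The leftover term $[e_k,\langle x\rangle]$ lies in $\mathcal J^{(k-1)}$. This yields the recursion
\[
[e_i,e_{-i}]\equiv -[e_{i-1},e_{1-i}] \pmod{\mathcal J^{(-m)}}\quad\text{for } 1\le i\le k,
\]
provided the intermediate indices allow the Corollary. Iterating down to $[e_0,e_0]=0$ would finish the proof. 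In fact this works for every $i$ directly, with no need for a base case.

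\textbf{Main obstacle: the $\mathcal J^{(-m)}$ filtration bounds.} These are the delicate point in the recursion. When $i\le \ell$ or $-i$ is close to $-m$, the term $[x,[e_i,e_{1-i}]]$ is only known to lie in $\mathcal J^{(-i-1)}$, which need not be contained in $\mathcal J^{(-m)}$.

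\textbf{Better approach: a symmetric expansion.} To avoid this, we instead expand to a full skew pairing. The bracket $[e_i,e_{-i}]$ modulo $\mathcal J^{(-m)}$ is measured by the skew form: $\xi/\mathcal J^{(-m)}$ is paired via $\bar\sigma$ with $\mathcal J^{(m+1)}$-type data, and $\eta$ spans the top quotient. Concretely, we use Proposition \ref{Prop: JacobiFrame}(3), which identifies the component of brackets $[\mathcal J^{(a)},\mathcal J^{(-a)}]$ transverse to $\mathcal J^{(-m)}$ with the value of $\bar\sigma$ on the symplectic flag of the Jacobi curve. The transverse part of $[e_i,e_{-i}]$ is then a multiple of $\eta$, with coefficient proportional to $\bar\sigma(e_i,e_{m-?})$-type pairings. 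By the recursion above this coefficient equals $(-1)^i$ times the corresponding coefficient for $[e_0,e_0]$, which is $0$.

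\textbf{Step to check first.} The key thing to verify is that every error term in the recursion lands in $\mathcal J^{(-m)}$, using Lemma \ref{Lemma: invol cond}(2) for the vertical parts $e_j$ with $j>0$. If that verification fails, the fallback is to argue via the antisymmetry of $\bar\sigma$ on the Jacobi curve: the pairing $\omega(e_i,e_{-i})$ lives on the isotropic part spanned by the $e$-row. In the normal form of \cite{doubrov2016JacobiCurves} that row is Lagrangian in the $(2k+1)$-block, which forces the coefficient to vanish.
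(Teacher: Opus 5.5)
Your recursion $[e_i,e_{-i}]\equiv-[e_{i-1},e_{1-i}]\pmod{\mathcal J^{(-m)}}$ for $1\le i\le k$ is exactly the paper's proof, and it is already complete. You obtain it by writing $e_{-i}=[x,e_{1-i}]$ modulo $\langle x\rangle$, applying the Jacobi identity, and iterating down to $[e_0,e_0]=0$.

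The ``main obstacle'' you raise afterwards is not real. The Jacobi flag increases as the superscript decreases, and $\mathcal J^{(-m-1)}=\mathcal J^{(-m)}$ by the definition of $m$. So every error term lands in $\mathcal J^{(-m)}$ for all $1\le i\le k\le m$, independently of $\ell$:
\begin{itemize}
\item the term $[e_i,gx]$ lies in $\mathcal J^{(i-1)}$ by the osculation property;
\item the terms coming from $[hx,e_{1-i}]$ lie in $\mathcal J^{(-i)}$;
\item since $e_i\in V_i$ for $i\ge 1$, Lemma \ref{Lemma: invol cond}(2) gives $[e_i,e_{1-i}]\in\mathcal J^{(1-i)}$, hence $[x,[e_i,e_{1-i}]]\in\mathcal J^{(-i)}$.
\end{itemize}
Even your weaker Corollary bound suffices, because $\mathcal J^{(-i-1)}\subseteq\mathcal J^{(-k-1)}\subseteq\mathcal J^{(-m-1)}=\mathcal J^{(-m)}$. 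Smaller $i$ makes these subspaces smaller, not larger. The same computation works verbatim for the $f_j$.

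You should drop the symplectic ``better approach'' and the Lagrangian fallback, because as written they are not valid:
\begin{itemize}
\item Proposition \ref{Prop: JacobiFrame}(3) is about $[e_i,f_{-i}]$, not $[e_i,e_{-i}]$.
\item The assertion that the part of $[e_i,e_{-i}]$ transverse to $\mathcal J^{(-m)}$ is a multiple of $\eta$ amounts to Proposition \ref{Prop: Technical}(ii). The paper proves that statement \emph{using} this lemma, so relying on it here would be circular.
\item The vanishing of a pointwise $\bar\sigma$-pairing of $e_i$ with $e_{-i}$ would only place $[e_i,e_{-i}]$ in $\xi$. That subspace strictly contains $\mathcal J^{(-m)}$ whenever $D$ is not of maximal class.
\end{itemize}
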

\begin{proof}
    By induction on $i$; the base case $i=0$ is trivial. For the induction case, assume the result holds for some $0\leq i <k$. Using the second property of Proposition \ref{Prop: JacobiFrame}, we have
    \begin{align}
        [e_{i+1},e_{-i-1}] &\equiv \big[e_{i+1},[x,e_{-i}]\text{ mod } \mathcal C\big] 
        \\
        &\equiv \big[[e_{i+1},x],e_{-i}\big] + \big[x,[e_{i+1},e_{-i}]\big] \mod \mathcal{J}^{(i)}\\
        &\equiv -[e_{i}, e_{-i}] \mod \mathcal{J}^{(-m)}
    \end{align}
    where the last equivalence holds because by the second involutivity condition of Lemma \ref{Lemma: invol cond}, $[e_{i+1},e_{-i}]\in \mathcal{J}^{(-i)}$. Now applying the induction hypothesis gives the result for $i+1$. Similar argument can be made for the $f_i$.
\end{proof}

\begin{proof}[Proof of (ii)]
    By the definition \eqref{F def 2},
    \begin{align}
        \label{Last Fi}
        \mathcal{F}^{-2m-2} & = \mathcal{F}^{-2m-1} + \sum_{j=1}^{m+1}[\mathcal{F}^{-j},\mathcal{F}^{-2m-2+j}]
    \end{align}
    Now for each index $j$ with $1\leq j \leq \ell$, use property 1 of Proposition \ref{Prop: JacobiFrame} and item (i) of Proposition \ref{Prop: Technical} to observe that 
    \begin{align}
        [\mathcal{F}^{-j},\mathcal{F}^{-2m-2+j}] & =[\mathcal{F}^{1-j}\oplus \langle e_{m+1-j}\rangle,\mathcal{F}^{-2m-1+j}\oplus \langle f_{-m-1+j}\rangle]
        \\
        &\subseteq \mathcal{F}^{-2m-1} + \Big\langle [e_{m+1-j},f_{-m-1+j}]\Big\rangle = \mathcal{F}^{-2m-1}\oplus \langle \eta\rangle
    \end{align}
    where the containment follows from \eqref{bracket-resp cond}, and the last equality follows from property 3 of Proposition \ref{Prop: JacobiFrame}. 
    
    Similarly, for each index $j$ with $\ell+1\leq j \leq m+1$, we have
    \begin{align}
        &[\mathcal{F}^{-j},\mathcal{F}^{-2m-2+j}]
        \\
        &\ = \big[\mathcal{F}^{1-j}\oplus \langle e_{m+1-j},f_{m+1-j}\rangle,\mathcal{F}^{-2m-1+j}\oplus \langle e_{-m-1+j}, f_{-m-1+j}\rangle\big]
        \\
        &\ \subseteq \mathcal{F}^{-2m-1} + \big[\langle e_{m+1-j},f_{m+1-j}\rangle,\langle e_{-m-1+j},f_{-m-1+j}\rangle\big]
    \end{align}
    Applying Lemma \ref{Lemma: [e_i,e_-i]} and property 3 of Proposition \ref{Prop: JacobiFrame} then gives that 
    \begin{equation}
        [\mathcal{F}^{-j},\mathcal{F}^{-2m-2+j}]\subseteq \mathcal{F}^{-2m-1}\oplus \langle \eta\rangle.
    \end{equation}
    Returning to \eqref{Last Fi} then gives $\mathcal{F}^{-2m-2}(\lambda) \subseteq  \mathcal{F}^{-2m-1}(\lambda)\oplus \big\langle \eta(\lambda)\big\rangle$. The opposite inclusion is immediate from item (3) of Proposition \ref{Prop: JacobiFrame}.
\end{proof}

\begin{proof}[Proof of (iii)]
    For $1\leq i \leq 2m+1$, the claim is easily established by applying item (i) of Proposition \ref{Prop: Technical} along with the involutivity conditions of Lemma \ref{Lemma: invol cond}. Now fix some $a>2m+1$ and assume the claim for each $i<a$. We have by the Jacobi identity that
    \begin{gather}
        [V_{m+1},\mathcal{F}^{-a}] =  \Big[V_{m+1},\mathcal{F}^{1-a}+\sum_{j=1}^{a-1}[\mathcal{F}^{-j},\mathcal{F}^{j-a}]\Big]
        \\
         \subseteq [V_{m+1},\mathcal{F}^{1-a}] + \sum_{j=1}^{a-1}\big[V_{m+1},[\mathcal{F}^{-j},\mathcal{F}^{j-a}]\big]
        \\
         \subseteq [V_{m+1},\mathcal{F}^{1-a}]+ \sum_{j=1}^{a-1}\big[[V_{m+1},\mathcal{F}^{-j}],\mathcal{F}^{j-a}\big] + \big[\mathcal{F}^{-j},[V_{m+1},\mathcal{F}^{j-a}]\big]
        \\
        \subseteq \mathcal{F}^{-a} + \sum_{j=1}^{a-1}[\mathcal{F}^{-j},\mathcal{F}^{j-a}] \subseteq \mathcal{F}^{-a}
    \end{gather}
    where the last line follows from the induction hypothesis and the definition of $\mathcal{F}^{-a}$.
\end{proof}

\begin{remark}
    \label{Remark: D and F}
    Two observations should be made about the relation of the filtration $\{\mathcal{F}^{-i}\}_{i>0}$ to the original distribution $D$.
    \begin{enumerate}
        \item By \eqref{J0 fact}, $\mathcal{F}^{-k-\ell-1}=\pi^*(D)$. Thus if the distribution $D$ is bracket-generating, then at each $\lambda\in \mathcal{R}_D$, $\mathcal{F}^{-i}(\lambda)=T\mathcal R_D$ for sufficiently large $i$.
        \item For any $\lambda\in \mathcal R_D$, the algebra of infinitesimal symmetries of $D$ near $\pi(\lambda)$ is naturally included in the algebra of infinitesimal symmetries of the filtration $\{\mathcal{F}^{-i}\}$ near $\lambda$, since this filtration was obtained canonically from $D$.
    \end{enumerate}
\end{remark}

\subsubsection{Defining the Jacobi-Tanaka Algebra}
Before considering the basic local invariants of the filtration obtained via the symplectification procedure, we quotient by the Cauchy characteristic $V_{m+1}$, which has rank $n-4-2k-\ell$. Item (iii) of Proposition \ref{Prop: Technical} implies that the (locally defined) quotient $q:\mathcal R_D'\to \check{\mathcal R}_D'$ by the foliation generated by $V_{m+1}$ carries a filtration $\{\check {\mathcal F}^{-i}\}$ induced by $\{\mathcal F^{-i}\}$. Further, the vertical distribution ${\mathcal{V}}\subseteq T\mathcal R_D'$ passes to a distribution $\check{\mathcal{V}}$ on $\check{\mathcal R}_D'$ which can be used to recover the germ of the original distribution, $D$. We will use $\check \pi:\check{\mathcal R}_D'\to M$ for the projection induced by $\pi:\mathcal R_D'\to M$. Note that $\check{\mathcal{V}}$ is the vertical distribution for $\check \pi$. Near each $\lambda\in\mathcal R_D'$, we have maps of filtered manifolds
\begin{center}
\begin{altfig}{A commutative triangle. Top-left, the pair script R prime subscript D with the collection F superscript minus i; top-right, the checked version, script R check prime subscript D with F check superscript minus i; bottom, the pair M with D superscript minus i. A horizontal map q goes top-left to top-right, a diagonal map pi goes top-left to bottom, and a vertical map pi check goes top-right to bottom, with pi check after q equal to pi.}

    \begin{tikzcd}[column sep=large, row sep=large]
    \label{Diagram: reduced sympl}
        \big(\mathcal R_D', \{\mathcal F^{-i}\}\big)
        \arrow[r, "q"]
        \arrow[dr, "\pi"']
        &
        \big(\check{\mathcal R}_D', \{\check{\mathcal F}^{-i}\}\big)
        \arrow[d, "\check{\pi}"]
        \\
        & \big(M, \{D^{-i}\}\big)
    % \alttext{A commutative triangle. Top-left, the pair script R prime subscript D with the collection F superscript minus i; top-right, the checked version, script R check prime subscript D with F check superscript minus i; bottom, the pair M with D superscript minus i. A horizontal map q goes top-left to top-right, a diagonal map pi goes top-left to bottom, and a vertical map pi check goes top-right to bottom, with pi check after q equal to pi.}
    \end{tikzcd}
\end{altfig}
\end{center}
At each $\check{\lambda}\in \check{\mathcal{R}}_D'$, define the \emph{Tanaka symbol of $\{\check{\mathcal{F}}^{-i}\}_{i>0}$ at $\check{\lambda}$} to be
\begin{equation}
    \label{symbol def}
    \mathfrak{m}(\check{\lambda}) = \bigoplus_{i=1}^{\infty}\check{\mathcal{F}}^{-i}(\check{\lambda})/\check{\mathcal{F}}^{1-i}(\check{\lambda}) = \bigoplus_{i>0}\mathfrak{m}_{-i},
\end{equation}
which is a negatively graded Lie algebra with bracket induced by the bracket of vector fields and serves as a model to the filtered tangent space $T_{\check{\lambda}}\mathcal R_D'$. We also call $\mathfrak{m}(\check{\lambda})$ the \emph{Jacobi-Tanaka algebra of $D$ at $\check{\lambda}$}. If $\mathfrak{m}(\check{\lambda})$ does not depend on $\check\lambda$, then we say $D$ is \textit{of constant Jacobi-Tanaka symbol}.

\begin{remark}
    \label{Remark: Jacobi-Tanaka identification}
    For any $\lambda\in \check{\mathcal{R}}_D'$, the Jacobi-Tanaka algebra at $\check{\lambda}=q(\lambda)$ is naturally identified with the quotient Tanaka symbol of the filtration $\{\mathcal F^{-i}\}_{i>0}$ by $V_{m+1}(\lambda)$; explicitly, this Tanaka symbol is given by
    \begin{equation}
        \bigoplus_{i\geq 1} \mathcal F^{-i}(\lambda)/\mathcal F^{1-i}(\lambda).
    \end{equation}
\end{remark}

\begin{definition}
    \label{def: Tanaka prol}
    Given a negatively graded Lie algebra $\mathfrak{a}$, the \textit{universal Tanaka prolongation} of $\mathfrak{a}$ is the maximal graded Lie algebra 
    \[
        \mathfrak{g}(\mathfrak{a}) = \displaystyle{\bigoplus_{i\in\mathbb{Z}}\mathfrak{g}_i}
    \]
    satisfying the properties
    \begin{enumerate}[label=(\roman*)]
        \item For each $i<0$, $\mathfrak{g}_i = \mathfrak{a}_i$
        \item For any $i\geq0$ and any $x\in \mathfrak{g}_i$, if $[x,\mathfrak{g}_j] = 0$ for each $j<0$, then $x=0$.
    \end{enumerate} 
\end{definition}

The universal Tanaka prolongation of $\mathfrak{a}$ is unique up to natural graded Lie algebra isomorphism and can be constructed recursively by defining $\mathfrak{g}_{i} = \mathfrak{a}_{i}$ for $i<0$ and

\begin{equation}
\mathfrak{g}_i=\left\{\varphi\in\bigoplus_{j>0}\operatorname{Hom}(\mathfrak a_{-j},\mathfrak g_{i-j})\ \middle|\
\begin{aligned}
&[\varphi(X),Y]+[X,\varphi(Y)]=\varphi\big([X,Y]\big)\\
&\text{for all } X,Y\in\mathfrak a
\end{aligned}\right\}
\end{equation}
for each $i\geq 0$. Note that the universal Tanaka prolongation of a graded Lie algebra $\mathfrak{a}$ might be infinite-dimensional.
\begin{remark}
    \label{Remark: Tanaka symm}
    It is well-known that if $\mathfrak{a}(\lambda)$ is the Tanaka symbol of a (non-positively) filtered manifold $\big(N,\{\mathcal G^{-i}\}\big)$ at a point $\lambda$, then the algebra of infinitesimal symmetries for $\big(N,\{\mathcal G^{-i}\}\big)$ at $\lambda$ has dimension at most $\mathrm{dim}\big(\mathfrak g(\mathfrak a(\lambda)\big)$. See, for example, \cite{cap2017} \cite{Tanaka1970} \cite{Zelenko2009-qt}. Although these resources treat only the case of fundamental Tanaka symbols (that is, the symbols $\mathfrak m$ generated by $\mathfrak m_{-1}$), the proofs can be reproduced with obvious modification for non-fundamental symbols.
\end{remark}

\begin{remark}
    \label{Remark: generators of m}
    Suppose $\mathfrak{m}$ is the Jacobi-Tanaka algebra for a distribution $D$ with rectangular Jacobi symbol $(k,0)$. Then by \eqref{F def 2}, $\mathfrak{m}$ is \textit{fundamental}; that is, $\mathfrak{m}$ is generated as a Lie algebra by $\mathfrak m_{-1}$.

    Remark \ref{Remark: D and F} implies that if $\mathfrak m$ is a Jacobi-Tanaka algebra at a point $\lambda$ with Jacobi-Tanaka symbol $(k,\ell)$, then $\mathfrak m_{-1}\oplus\mathfrak m_{-2}\oplus\cdots \oplus \mathfrak m_{-\ell-1}$ generates $\mathfrak m$, so each $i>\ell+1$ has
    \begin{equation}
        \label{eqn: generators of mi}
        \mathfrak m_{-i} = \bigoplus_{j=1}^{\ell+1} [\mathfrak m_{-j}, \mathfrak m_{-i+j}]
    \end{equation}
\end{remark}

We will soon relate the infinitesimal symmetries of $\big(\check{\mathcal R}_D',\{\check{\mathcal F}^{-i}\}\big)$ to those of the original distribution $(M,D)$. Before we do so, we must consider how the Jacobi symbol constrains the Jacobi-Tanaka algebra.

For each $i>0$, define
\begin{equation}
    \label{def: m^-i, m'}
    \mathfrak{m}^{-i} = \bigoplus_{-i\leq j \leq -1}\mathfrak{m}_j\quad \text{and}\quad \mathfrak{m}' = \bigoplus_{j\leq -2k-2\ell - 3} \mathfrak{m}_j
\end{equation}

Recall that by Remark \ref{Remark: Jacobi-Tanaka identification}, at any $\lambda\in \mathcal R_D'$, the Jacobi-Tanaka symbol at $q(\lambda)$ is isomorphic to the Tanaka symbol of $\big(\mathcal R_D',\{\mathcal F^{-i}\}\big)$ at $\lambda$ after quotienting by $V_{m+1}$. We utilize this identification in the following proposition, which shows that the graded Lie algebra $\mathfrak{m}/\mathfrak{m}'$ is equivalent to the Jacobi symbol $(k,\ell)$.
\begin{prop}
    \label{prop: m rels}
    Let $D$ be a $(3,n)$ distribution on $M$ with constant Jacobi symbol $(k,\ell)$, and let $\big(x,\{e_i\}_{i=-k}^{m},\{f_j\}_{j=-m}^k,\eta\big)$ be vector fields on $\mathcal R_D'$ as in Proposition \ref{Prop: JacobiFrame}. Fix $\lambda\in \mathcal R_D'$ where the vector fields are defined, and let $\check\lambda = q(\lambda)$. Define elements of $\mathfrak m(\check\lambda)$
    \begin{align}
        & \oline{x} = x(\lambda) \mod V_{m+1}(\lambda),\quad & \oline e_{i} = e_{i}(\lambda)\mod \mathcal{F}^{i-m}(\lambda),\\
        &  \oline f_{i} = f_{i}(\lambda)\mod \mathcal{F}^{i-m}(\lambda),\quad &\oline \eta = \eta(\lambda) \mod \mathcal{F}^{-2m-1}(\lambda)
    \end{align}
    for appropriate $i$ and $j$. We have bracket relations in $\mathfrak{m}(\check\lambda)$
    \begin{align}
        [\oline x,\oline e_i] &= \oline e_{i-1}\quad\text{for each }1-k\leq i \leq m
        \\
        [\oline x,\oline f_i] &= \oline f_{i-1}\quad\text{for each }1-m \leq i \leq k
        \\
        [\oline e_i, \oline f_{-i}] &= (-1)^i \eta\quad \text{for each } -k\leq i \leq m 
        %\label{Heisenberg}
    \end{align}
    and all other brackets from $\big(\oline x,\{\oline e_i\}_{i=-k}^{m},\{\oline f_j\}_{j=-m}^k,\oline \eta\big)$ are zero modulo ${\mathfrak{m}}'\big(\check\lambda\big)$.
\end{prop}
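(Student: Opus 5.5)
The plan is to verify the relations degree by degree, using the grading that item (i) of Proposition \ref{Prop: Technical} and property 1 of Proposition \ref{Prop: JacobiFrame} put on the frame. Since $\mathcal F^{-1}=\mathcal J^{(m+1)}=V_{m+1}\oplus\mathcal C$, the element $\oline x$ spans $\mathfrak m_{-1}$. Since $e_i,f_i\in\mathcal J^{(i)}=\mathcal F^{i-m-1}$, the elements $\oline e_i,\oline f_i$ lie in $\mathfrak m_{i-m-1}$, and by item (ii) $\oline\eta$ spans $\mathfrak m_{-2m-2}$. Hence $\mathfrak m/\mathfrak m'$ has basis $\oline x,\oline e_i,\oline f_j,\oline\eta$, with one or two basis vectors in each degree $-1,\dots,-2m-2$.

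A bracket of two basis elements whose degree is $\le -2m-3$ vanishes modulo $\mathfrak m'$ automatically. So only brackets landing in degrees $\ge -2m-2$ need to be checked, and each of these must be identified with a combination of the finitely many basis vectors of the target degree. I will first handle the brackets with $\oline x$, then the brackets among the $\oline e$'s and $\oline f$'s.

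\textbf{Brackets with $\oline x$.} Property 2 of Proposition \ref{Prop: JacobiFrame} gives $[x,e_i]\equiv e_{i-1}$ modulo $\langle x\rangle\subseteq\mathcal F^{-1}$, which is a strictly smaller filtrand. This yields $[\oline x,\oline e_i]=\oline e_{i-1}$ for $1-k\le i\le m$, and similarly $[\oline x,\oline f_j]=\oline f_{j-1}$ for $1-m\le j\le k$.

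The delicate boundary case is $[\oline x,\oline e_{-k}]$, of degree $-m-k-2$, which is $\ge -2m-2$ when $\ell\ge 0$. By \eqref{J(-i) def} we only know
\[
[x,e_{-k}]\equiv c\,f_{-k-1}\mod \mathcal J^{(-k)},
\]
and we must show $c=0$. (The mirror case $[\oline x,\oline f_{-m}]$ has degree $-2m-2$, and its vanishing is the same kind of statement.)

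\textbf{Detecting coefficients by pairing with $\eta$.} The coefficient $c$ is detected by bracketing with the complementary element $e_{k+1}$, since property 3 gives $[e_{k+1},f_{-k-1}]\equiv\pm\eta$ modulo $\mathcal J^{(-m)}$. The Jacobi identity gives
\[
[e_{k+1},[x,e_{-k}]]=[[e_{k+1},x],e_{-k}]+[x,[e_{k+1},e_{-k}]].
\]
In the first term, $[e_{k+1},x]\equiv -e_k$, so this term is congruent to $-[e_k,e_{-k}]$, which lies in $\mathcal J^{(-m)}$ by Lemma \ref{Lemma: [e_i,e_-i]}. In the second term, $[e_{k+1},e_{-k}]\in\mathcal J^{(-k)}$ by Lemma \ref{Lemma: invol cond}(2), so its bracket with $x$ lies in $\mathcal J^{(-k-1)}\subseteq\mathcal J^{(-m)}$. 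Therefore the left-hand side has no $\eta$-component, which forces $c=0$ by property 3.

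\textbf{Remaining brackets.} The same pairing device will be applied to the brackets $[\oline e_i,\oline e_j]$, $[\oline f_i,\oline f_j]$, and $[\oline e_i,\oline f_j]$ with $j\ne -i$.

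First, a bracket lands in degree $-2m-2$ exactly when $i+j=0$. In that case the $ee$ and $ff$ brackets vanish by Lemma \ref{Lemma: [e_i,e_-i]}, and the $ef$ bracket equals $(-1)^i\oline\eta$ by property 3. For $i$ outside $[-k,k]$ this is still covered by property 3 together with $\eta=(-1)^m[e_m,f_{-m}]$.

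Second, when $i+j>0$ the bracket lands in a degree strictly between. There Corollary \ref{Corollary: Involutivity} puts the bracket into $\mathcal J^{(\min\{i,j\}-1)}$, which is typically the filtrand of the correct degree or deeper. The leftover components are read off, as above, by bracketing with complementary $e$'s or $f$'s. The resulting iterated brackets are reduced, via the Jacobi identity and the relation $[x,\cdot\,]$ lowering the index, to the base cases already handled.

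I expect the main obstacle to be this last bookkeeping. The naive filtration bounds from Corollary \ref{Corollary: Involutivity} are sometimes off by exactly one filtration step, leaving undetermined coefficients along $\oline e$'s or $\oline f$'s of the target degree. Each such coefficient has to be killed by a Jacobi-identity argument of the $[x,e_{-k}]$ type. I would organize this as an induction on $i+j$ descending from the top, moving indices across using $\mathrm{ad}\,\oline x$, in the same way as in the proof of Lemma \ref{Lemma: [e_i,e_-i]}.
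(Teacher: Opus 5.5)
Your handling of the brackets with $\oline x$ is correct, and your $\eta$-pairing argument that kills $c$ in $[\oline x,\oline e_{-k}]=c\,\oline f_{-k-1}$ for $\ell>0$ is exactly the paper's. The cases $i+j<0$ (automatic modulo $\mathfrak m'$) and $i+j=0$ (Lemma \ref{Lemma: [e_i,e_-i]} and property 3) are also fine.

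\textbf{The gap.} It is the case $i+j>0$ for brackets among the $\oline e$'s and $\oline f$'s, which is most of the claim that ``all other brackets vanish,'' and which you leave as a plan. With $i\ge j$, Corollary \ref{Corollary: Involutivity} only gives $\mathcal J^{(j-1)}=\mathcal F^{j-m-2}$. Vanishing in degree $i+j-2m-2$ requires $\mathcal F^{i+j-2m-1}$. This works iff $i\le m-1$, so every bracket whose larger index is $m$ keeps an undetermined component.

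Your pairing device does not resolve these components. It only produces linear relations among the unknown coefficients of other brackets of the same type. For instance, take $m=k=1$ and write
$[\oline e_1,\oline f_1]=p\,\oline e_0+q\,\oline f_0$, $[\oline f_0,\oline e_1]=\alpha\,\oline e_{-1}+\beta\,\oline f_{-1}$, and $[\oline f_0,\oline f_1]=\gamma\,\oline e_{-1}+\delta\,\oline f_{-1}$.
Pairing the first with $\oline f_0$ and the second with $\oline f_1$ gives the same relation $p=\alpha+\delta$ both times. So the first step of your descending induction already depends on brackets of lower index sum, and the induction does not close.

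\textbf{The missing ingredient.} Use Lemma \ref{Lemma: invol cond} directly rather than through Corollary \ref{Corollary: Involutivity}. The corollary loses one filtration step only because of the $\mathcal C$-summand in $\mathcal J^{(i)}=V_i\oplus\mathcal C$. The fields $e_i,f_i$ with $i>0$ are vertical, i.e.\ they lie in $V_i$, so no step is lost. Take $i\ge j$ with $i+j>0$:
\begin{itemize}
\item If $j>0$, all four brackets lie in $V_j\subseteq\mathcal J^{(j)}$ by involutivity of $V_j$.
\item If $j\le 0$, then $i\ge 1-j$, so $V_i\subseteq V_{1-j}$, and item (2) gives $[V_{1-j},\mathcal J^{(j)}]\subseteq\mathcal J^{(j)}$.
\end{itemize}
Hence every such bracket lies in $\mathcal J^{(j)}=\mathcal F^{j-m-1}\subseteq\mathcal F^{i+j-2m-1}$, since $i\le m$. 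So it vanishes in $\mathfrak m$ with no leftover component to kill.

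\textbf{A smaller point.} $[\oline x,\oline f_{-m}]$ is not ``the same kind of statement.'' It lies in $\mathfrak m_{-2m-2}=\langle\oline\eta\rangle$, and pairing it with anything lands in degree $\le -2m-3$, so pairing detects nothing. Its vanishing follows from $\mathcal J^{(-m-1)}=\mathcal J^{(-m)}$, i.e.\ from the definition of $m$. The same argument is needed for $[\oline x,\oline e_{-k}]$ when $\ell=0$, where $e_{k+1}$ does not exist.
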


\begin{proof}
    Each of the nontrivial relations hold immediately from the properties of the frame from Proposition \ref{Prop: JacobiFrame} and from items (i) and (ii) of Proposition \ref{Prop: Technical}. All relations involving $V_{m+1}$ are demonstrated by item (iii) of Proposition \ref{Prop: Technical}. It remains to show the trivial relations between the $\oline e_i$ and the $\oline f_i$.

    {\bf Brackets among the $\oline e_i$ and the $\oline f_j$:}
    First consider the case where $i+j>0$; assume without loss of generality that $i\geq j$ so that $i>0$. Items (1) and (2) of Lemma \ref{Lemma: invol cond} imply that each of $[e_i,e_j], [e_i,f_j], [f_i,e_j],$ and $[f_i,f_j]$ is contained in $\mathcal J^{(j)}$; item (1) is applicable for $j\leq 0$, while item (2) is applicable for $j>0$. By item (i) of Proposition \ref{Prop: Technical}, $ \mathcal J^{(j)} = \mathcal F^{j-m-1}\subseteq \mathcal F^{i+j-2m-1}$, so that in the grading, each of the brackets vanishes.
    
    On the other hand, if $i+j<0$, then each of $[\oline e_i,\oline e_j]$, $[\oline e_i,\oline f_j]$, and $[\oline f_i,\oline f_j]$ has graded weight at most $-2m-3$, and therefore belongs to $\mathfrak{m}'(\check\lambda)$. Finally, if $i+j=0$, then one can induct on $i$ as follows; let $[\oline e_0, \oline e_0]=0$ be the base case. The Jacobi identity then gives
    \begin{equation}
        [\oline e_{-1},\oline e_1] = \big[[\oline x, \oline e_0],\oline e_1\big] = \big[[\oline x, \oline e_1],\oline e_0\big] +  \big[\oline x,[\oline e_0,\oline e_1]\big] = [\oline e_0,\oline e_0] + [\oline x, 0] = 0
    \end{equation}
    where $[\oline e_1,\oline e_0]=0$ has already been shown. Continuing in this way, one can show that $[\oline e_i, \oline e_j] = [\oline f_i, \oline f_j] = 0$ whenever $i+j=0$.

    {\bf Brackets with $\oline x$:} By the definition of $m$, $[\mathcal C,\mathcal J^{(-m)}] \subseteq \mathcal J^{(-m)}$, so $[\oline x, \oline f_{-m}]=[\oline x, \oline e_{-m}]=0$. In the case that $\ell>0$, we also must show $[\oline x, \oline e_{-k}] = 0$. By Proposition \ref{Prop: JacobiFrame} (1) and Proposition \ref{Prop: Technical} (i), we have that $\mathfrak m_{-m-k-2} = \langle f_{-k-1}\rangle,$ so $[\oline x, \oline e_{-k}] = cf_{-k-1}$ for some $c\in \mathbb R$. Applying the Jacobi identity yields
    \begin{gather}
        c(-1)^{k+1} \oline \eta = [\oline e_{k+1},c\oline f_{-k-1}] = \big[\oline e_{k+1},[\oline x,\oline e_{-k}]\big] 
        \\
        =-[\oline e_{k},\oline e_{-k}] + \big[\oline x,[\oline e_{k+1},\oline e_{-k}]\big] = 0.
    \end{gather}
    Thus $c= 0$ and $[\oline x, \oline e_{-k}]=0$.

    The remaining brackets ($[\oline x, \oline \eta], [\oline e_i,\oline \eta]$, and $[\oline f_i,\oline \eta]$) all have weight at most $-2m-3$ and are therefore contained in $\mathfrak m'(\check\lambda)$.
\end{proof}

Although the above proposition holds only with modulus $\mathfrak{m}(\check\lambda)$, the following corollary holds without this modulus. Note that throughout its proof, the proposition is applied only in graded weight at least $-2m-2$, where the modulus is irrelevant.

\begin{corollary}
    \label{cor: more m rels}
    Let $\lambda$, $\check\lambda$, and $\big(\oline x,\{\oline e_i\}_{i=-k}^{m},\{\oline f_j\}_{j=-m}^k,\oline \eta\big)$ be as above. Then we have bracket relations in ${\mathfrak{m}}(\check\lambda)$ 
    \begin{equation}
        [\oline x,\oline \eta] = 0\quad \text{and}\quad [\oline e_i,\oline \eta] = [\oline f_i, \oline \eta] = 0\text{ for }i>0
    \end{equation}
\end{corollary}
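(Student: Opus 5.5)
The plan is to derive each relation from the Jacobi identity together with the relations of Proposition \ref{prop: m rels}, using only those that live in graded weight at least $-2m-2$, where the modulus $\mathfrak m'(\check\lambda)$ plays no role. The key observation is that $\oline\eta$ is a single bracket of elements of $\mathfrak m$ of known behaviour. For instance, Proposition \ref{prop: m rels} gives $[\oline e_0,\oline f_0]=\oline\eta$. Writing $\oline\eta$ this way, every bracket $[\,\cdot\,,\oline\eta]$ can be expanded by the Jacobi identity into brackets of $\oline x$, $\oline e_i$, $\oline f_j$ with $\oline e_0$ or $\oline f_0$, which have weight at least $-2m-2$.

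For $[\oline x,\oline\eta]$, I would compute
\[
[\oline x,\oline\eta]=[\oline x,[\oline e_0,\oline f_0]]=[[\oline x,\oline e_0],\oline f_0]+[\oline e_0,[\oline x,\oline f_0]]=[\oline e_{-1},\oline f_0]+[\oline e_0,\oline f_{-1}].
\]
When $k\ge1$, both terms have weight $-2m-3$. So this direct expansion is not yet enough, and I need a different representation of $\oline\eta$. I would instead use $\oline\eta=(-1)^i[\oline e_i,\oline f_{-i}]$ with the index chosen so that one factor commutes with $\oline x$. The natural choice is $\oline\eta=(-1)^m[\oline e_m,\oline f_{-m}]$ (the definition in Proposition \ref{Prop: JacobiFrame}), because $[\oline x,\oline f_{-m}]=0$ holds as a genuine relation. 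That relation comes from $[\mathcal C,\mathcal J^{(-m)}]\subseteq\mathcal J^{(-m)}$, and its weight is $-2m-1$, so no modulus is involved. Then
\[
[\oline x,\oline\eta]=(-1)^m\bigl([\oline e_{m-1},\oline f_{-m}]+[\oline e_m,[\oline x,\oline f_{-m}]]\bigr)=(-1)^m[\oline e_{m-1},\oline f_{-m}].
\]
This last bracket has weight $(m-1-m-1)+(-m-m-1)$; I would double-check it against the grading, where $\oline e_i$ sits in degree $i-m-1$ and $\oline f_{-m}$ in degree $-2m-1$, giving weight $-2m-3$. This is still too low, so the argument must instead rewrite $[\oline e_{m-1},\oline f_{-m}]$ once more. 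Note that $\oline f_{-m}=[\oline x,\oline f_{1-m}]$ and $[\oline e_{m-1},\oline f_{1-m}]=(-1)^{m-1}\oline\eta$. Applying Jacobi gives
\[
[\oline e_{m-1},\oline f_{-m}]=[[\oline e_{m-1},\oline x],\oline f_{1-m}]+[\oline x,[\oline e_{m-1},\oline f_{1-m}]]=-[\oline e_{m-2},\oline f_{1-m}]+(-1)^{m-1}[\oline x,\oline\eta].
\]
Combining with the previous identity, the $[\oline x,\oline\eta]$ terms appear with coefficients that should cancel to $2[\oline x,\oline\eta]$ on one side. Iterating this shift down the diagram should reduce everything to brackets involving $\oline f_{-m}$ with $[\oline x,\oline f_{-m}]=0$, and then to brackets like $[\oline e_i,\oline f_j]$ with $i+j>0$. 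Those vanish exactly by Lemma \ref{Lemma: invol cond}, since for $i+j>0$ the vanishing holds in $\mathfrak m$ itself and not merely modulo $\mathfrak m'$. The outcome I aim for is a linear relation $c\,[\oline x,\oline\eta]=0$ with $c\neq0$.

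For $[\oline e_i,\oline\eta]$ with $i>0$, I would write $\oline\eta=(-1)^m[\oline e_m,\oline f_{-m}]$ and expand by Jacobi:
\[
[\oline e_i,[\oline e_m,\oline f_{-m}]]=[[\oline e_i,\oline e_m],\oline f_{-m}]+[\oline e_m,[\oline e_i,\oline f_{-m}]].
\]
Here $[\oline e_i,\oline e_m]=0$ exactly, since $i+m>0$. The second term involves $[\oline e_i,\oline f_{-m}]$, whose index sum $i-m$ may be nonpositive, so I would choose the representative of $\oline\eta$ more cleverly. Using $\oline\eta=\pm[\oline e_0,\oline f_0]$ gives
\[
[\oline e_i,\oline\eta]=\pm\bigl([[\oline e_i,\oline e_0],\oline f_0]+[\oline e_0,[\oline e_i,\oline f_0]]\bigr),
\]
and both inner brackets have index sum $i>0$. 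By the first paragraph of the proof of Proposition \ref{prop: m rels}, they vanish in weight at least $-2m-2$, and so they are genuinely $0$. Hence $[\oline e_i,\oline\eta]=0$. The case $[\oline f_i,\oline\eta]$ is identical.

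The hard part is $[\oline x,\oline\eta]$. Here no representative $[\oline e_i,\oline f_{-i}]$ has both factors commuting with $\oline x$, so one needs the telescoping Jacobi argument above, producing a nonzero multiple of $[\oline x,\oline\eta]$ equal to zero.
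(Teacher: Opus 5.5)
Your handling of $[\oline e_i,\oline\eta]$ and $[\oline f_i,\oline\eta]$ through $\oline\eta=[\oline e_0,\oline f_0]$ is exactly the paper's argument. Your telescoping for $[\oline x,\oline\eta]$ is the paper's Jacobi chain run in the opposite direction: the paper starts from $\oline\eta=(-1)^{k}[\oline e_{-k},\oline f_k]$ and climbs to $[\oline e_m,\oline f_{-m}]$. The gap is in how you close the chain. Every bracket your recursion produces has the form $[\oline e_{j-1},\oline f_{-j}]$, with index sum $-1$. Brackets $[\oline e_i,\oline f_j]$ with $i+j>0$ never occur, so Lemma \ref{Lemma: invol cond} terminates nothing. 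In fact the recursion runs down to the end of the $e$-row, where
\[
[\oline e_{-k},\oline f_{k-1}]=[\oline e_{-k},[\oline x,\oline f_k]]=[[\oline e_{-k},\oline x],\oline f_k]+(-1)^k[\oline x,\oline\eta].
\]
The first term lies in weight $-2m-3$, where the relations modulo $\mathfrak m'$ say nothing, unless you know that $[\oline x,\oline e_{-k}]=0$.

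That relation is the missing ingredient. You need both endpoint relations: $[\oline x,\oline f_{-m}]=0$, which you use, and $[\oline x,\oline e_{-k}]=0$. The point is not to find one representative of $\oline\eta$ with both factors commuting with $\oline x$; two different representatives each have one such factor, and the chain links them. The relation $[\oline x,\oline e_{-k}]=0$ comes from Proposition \ref{prop: m rels}. It holds exactly, since it lives in weight $-k-m-2\ge -2m-2$. For $\ell=0$ it is again $[\mathcal C,\mathcal J^{(-m)}]\subseteq\mathcal J^{(-m)}$; for $\ell>0$ it is the separate Jacobi argument with $\oline e_{k+1}$ in that proof.

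With this relation in hand, put $B_j=(-1)^j[\oline e_{j-1},\oline f_{-j}]$. Your identities become $B_j=B_{j-1}-[\oline x,\oline\eta]$ for $2-k\le j\le m$, together with $B_{1-k}=-[\oline x,\oline\eta]$ and $B_m=[\oline x,\oline\eta]$. Hence $(m+k+1)[\oline x,\oline\eta]=0$. You should carry out this count rather than just asserting $c\neq 0$: every step contributes with the same sign, and that is what makes $c=m+k+1$ nonzero. A minor slip: $[\oline x,\oline f_{-m}]$ has weight $-2m-2$, not $-2m-1$, which does not affect the argument.
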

\begin{proof}
    Applying the Jacobi identity and the previous proposition gives that
    \begin{gather}
        \label{x, eta 1}
        [\oline x,\oline \eta] = (-1)^{-k}\big[\oline x,[\oline e_{-k},\oline f_{k}]\big] 
        \\
        = (-1)^{-k}\Big(\big[[\oline x, \oline e_{-k}], \oline f_k\big] + \big[\oline e_{-k},[\oline x, \oline f_{k}]\big]\Big) = (-1)^{-k}\big[\oline e_{-k},\oline f_{k-1}\big],
    \end{gather}
    since $[\oline x, \oline e_{-k}]=0$ by Proposition \ref{prop: m rels}. Expanding $\oline e_{-k} = [\oline x , \oline e_{1-k}]$, we can continue by applying to Jacobi identity:
    \begin{gather}
        (-1)^{-k}[\oline e_{-k},\oline f_{k-1}] =  (-1)^{-k}\big[[\oline x, \oline e_{1-k}],\oline f_{k-1}\big] 
        \\
        = (-1)^{-k}\Big(\big[[\oline x, \oline f_{k-1}],\oline e_{1-k}\big]+\big[\oline x,[\oline e_{1-k},\oline f_{k-1}]\big]\Big)
        \\
        = (-1)^{-k}\Big( [\oline f_{k-2}, \oline e_{1-k}]+ (-1)^{1-k}[\oline x,\oline \eta]\Big) = -[\oline x, \oline \eta] + (-1)^{1-k}[\oline e_{1-k},\oline f_{k-2}]
    \end{gather}
    One can continue by expanding $e_{1-k} = [\oline x ,\oline e_{2-k}]$ and again applying the Jacobi identity. Iterating this procedure for $m+k-2$ more steps, we eventually obtain that
    \begin{gather}
        [\oline x,\oline \eta] = (-m-k+1)[\oline x,\oline \eta] + (-1)^{m-1}[\oline{e}_{m-1},\oline f_{-m}].
    \end{gather}
    Finally, we can write $\oline e_{m-1} = [\oline x, \oline e_{m}]$ and apply the Jacobi identity once more to find that
    \begin{gather}
        [\oline x,\oline \eta] = (-m-k+1)[\oline x,\oline \eta] + (-1)^{m-1}\big[[\oline x, \oline{e}_{m}],\oline f_{-m}\big] \\
        = (-m-k)[\oline x , \oline \eta]+ (-1)^{m}\big[\oline e_m,[\oline x, \oline f_{-m}]\big]
    \end{gather}
    By Proposition \ref{prop: m rels}, $[\oline x, \oline f_{-m}] = 0$; grouping all $[\oline x,\oline \eta]$ terms together then yields that $(m+k+1)[\oline x,\oline \eta] = 0$. Since $m+k+1> 0$, this implies $[\oline x,\oline \eta] = 0$, and the first relation is proven. 

    For the remaining relations, recall from the previous proposition that for each $i>0$, $[\oline e_i,\oline e_0] = [\oline e_i, \oline f_0] = 0$. Applying the Jacobi identity, we therefore obtain
    \begin{equation}
        [\oline e_i, \oline \eta] = \big[\oline e_i, [\oline e_0,\oline f_0]\big] = \big[[\oline e_i,\oline e_0],\oline f_0\big] + \big[\oline e_0, [\oline e_i, \oline f_0]\big] = 0.
    \end{equation}
    Similar argument gives that $[\oline f_i, \oline \eta]=0$.
\end{proof}

We are now able to relate the infinitesimal symmetries of $\big(\check{\mathcal R}_D',\{\check{\mathcal F}^{-i}\}\big)$ to those of the original distribution $(M,D)$.

\begin{prop}
    \label{Prop: Jacobi alg bij}
    Let $D$ be a rank $3$ distribution with $6$-dimensional square on $M$. Fix $\check\lambda\in {\check{\mathcal R}}_D'$, and let ${\mathfrak{m}}(\check{\lambda})$ be the Jacobi-Tanaka algebra of $D$ at $\check\lambda$. The infinitesimal symmetries of $\big({\check{\mathcal{R}}}_D',\{\check {\mathcal F}^{-i}\}\big)$ near $\check \lambda$ are in bijection with those of $(M,D)$ near $\check\pi(\check\lambda)$. In particular, the dimension of the algebra of infinitesimal symmetries of $D$ near $\check \pi(\check \lambda)$ is not greater than the dimension of $\mathfrak{g}\big({\mathfrak{m}}(\check\lambda)\big)$.
\end{prop}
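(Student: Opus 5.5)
The plan is to build maps in both directions and show they are mutually inverse, then deduce the dimension bound from Remark \ref{Remark: Tanaka symm}.

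\emph{From $(M,D)$ to the reduced filtered manifold.} Let $X$ be an infinitesimal symmetry of $D$ near $\check\pi(\check\lambda)$. Its flow acts on $T^*M$ by the cotangent lift, which preserves the Liouville form $s$. It therefore preserves $D^\perp$, $(D^2)^\perp$, $W_D$ and the even contact structure $(\xi,\oline\sigma|_\xi)$, and it induces a vector field $\tilde X$ on $\mathbb PD^\perp$. Since $\mathcal C$, $\mathcal V$ and the whole Jacobi flag are constructed canonically, the flow of $\tilde X$ preserves them. It also preserves the subsets $\mathcal R_D$ and $\mathcal R_D'$ and the filtration $\{\mathcal F^{-i}\}$; this is Remark \ref{Remark: D and F}(2). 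In particular $\tilde X$ preserves $V_{m+1}$, so it is projectable along the leaves of $V_{m+1}$. It therefore descends to a vector field $\check X$ on $\check{\mathcal R}_D'$ that preserves $\{\check{\mathcal F}^{-i}\}$. Moreover $\check\pi_*\check X=X$, since $\pi_*\tilde X=X$. Hence the assignment $X\mapsto\check X$ is an injective Lie algebra homomorphism.

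\emph{From the reduced filtered manifold back to $(M,D)$.} This is the main step. Let $Y$ be a symmetry of $\big(\check{\mathcal R}_D',\{\check{\mathcal F}^{-i}\}\big)$. I need to show that $Y$ preserves $\check{\mathcal V}$ and is $\check\pi$-projectable. The plan is to recover $\check{\mathcal V}$ intrinsically from the filtration:
\begin{itemize}
\item $\check{\mathcal F}^{-1}$ is the image of $\mathcal J^{(m)}=V_m\oplus\mathcal C$.
\item $\mathcal J^{(1)}=\mathcal C\oplus\mathcal V$ is one of the filtrands, namely $\mathcal F^{-m}$, by Proposition \ref{Prop: Technical}(i).
\item The line $\check{\mathcal C}$ should be characterized canonically: for instance, as the unique line in $\check{\mathcal F}^{-1}$ whose bracket with $\check{\mathcal F}^{-i}$ produces the successive jumps $\check{\mathcal F}^{-i-1}$ for $\ell+1\le i\le 2m$.
\end{itemize}
I would work this out at the symbol level, using the relations of Proposition \ref{prop: m rels}. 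In $\mathfrak m$, the operator $\mathrm{ad}$ of an element $a\oline x+v$ with $v\in\mathrm{span}\{\oline e_m,\dots\}$ acting on degree $-1$ must behave like $\mathrm{ad}\,\oline x$ only when $a\neq0$. The kernel-type conditions, such as $[\oline e_i,\oline e_j]=0$ and the fact that $\mathcal V$ is involutive while $\mathcal C\oplus\mathcal V$ is not, should then single out $\mathcal V$ as the involutive corank-one subdistribution of $\mathcal F^{-m}$ that contains $\mathcal F^{-m}\cap$(the integrable parts). A cleaner alternative that I would try first is to characterize $\check{\mathcal V}$ as a maximal involutive subdistribution of $\check{\mathcal F}^{-m}$ complementary to the characteristic direction. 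One can check via the symbol that this subdistribution is unique, because $[\oline x,\cdot]$ is injective on the relevant pieces. Granting this, $Y$ preserves $\check{\mathcal V}$, so it projects to a vector field $X$ on $M$. Since $\check{\mathcal F}^{-m-1}=\check\pi^*D$ by Remark \ref{Remark: D and F}(1), $X$ preserves $D$.

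\emph{Conclusion.} Composing the two constructions gives the identity in each direction. Projecting the lift gives back $X$. Conversely, lifting $\check\pi_*Y$ gives back $Y$, because a symmetry of the filtration that projects to zero is vertical and preserves the canonical structure. From the symbol relations, the only such symmetry is zero, by the same non-degeneracy used above. The dimension bound then follows from Remark \ref{Remark: Tanaka symm} applied to $\big(\check{\mathcal R}_D',\{\check{\mathcal F}^{-i}\}\big)$ at $\check\lambda$. Note that only the first direction, injectivity of $X\mapsto\check X$, is actually needed for the bound; if the intrinsic recovery of $\check{\mathcal V}$ turns out to be delicate, that direction alone already yields the last sentence of the statement.
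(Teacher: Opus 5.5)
Your first direction is fine. The cotangent lift of a symmetry $X$ of $D$ preserves every canonically constructed object, in particular $V_{m+1}$ and $\{\mathcal F^{-i}\}$, so it descends to $\check X$ with $\check\pi_*\check X=X$. You are also right that this injection together with Remark~\ref{Remark: Tanaka symm} already gives the final dimension bound.

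\textbf{The gap.} It is in the converse, which is what the bijection requires. You never prove that an arbitrary symmetry $Y$ of $\big(\check{\mathcal R}_D',\{\check{\mathcal F}^{-i}\}\big)$ preserves $\check{\mathcal V}$; you write ``Granting this''. The characterizations you sketch do not supply this step.
\begin{itemize}
\item Describing $\check{\mathcal V}$ as the maximal involutive subdistribution of $\check{\mathcal F}^{-m}$ complementary to the characteristic direction is circular. It presupposes that $\check{\mathcal C}$ is already intrinsically determined by the filtration, and your description of $\check{\mathcal C}$ through the jumps of the filtration is only conjectured.
\item The uniqueness ``because $[\oline x,\cdot]$ is injective on the relevant pieces'' is asserted, not checked.
\end{itemize}

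\textbf{The missing idea.} This is how the paper argues: $\check{\mathcal V}$ is the Cauchy characteristic of the single filtrand $\check{\mathcal F}^{-m-1}=\check\pi^*D$. Every symmetry of the filtration automatically preserves this object. To see that it equals $\check{\mathcal V}$:
\begin{itemize}
\item $\mathcal V$ lies in the Cauchy characteristic of $\pi^*D$, because it is the vertical distribution of $\pi$.
\item Nothing more is characteristic. The Levi bracket $\Lambda^2(\pi^*D)\to T\mathcal R_D/\pi^*D$ factors through the Levi bracket $\Lambda^2D\to TM/D$, which is injective since $\operatorname{rank}D^2=6$. In symbol terms, $[\oline x,\oline e_0]=\oline e_{-1}$, $[\oline x,\oline f_0]=\oline f_{-1}$ and $[\oline e_0,\oline f_0]=\oline\eta$ are independent (Proposition~\ref{prop: m rels}).
\item Since $V_{m+1}\subseteq\mathcal V$ preserves $\mathcal F^{-m-1}$ (Proposition~\ref{Prop: Technical}(iii)), this passes to the quotient.
\end{itemize}
Hence $Y$ preserves $\check{\mathcal V}$, is $\check\pi$-projectable, and projects to a symmetry of $D$.

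Your involutivity idea can be rescued in the form ``$\check{\mathcal V}$ is the unique involutive corank-one subdistribution of $\check{\mathcal F}^{-m}$''. The proof uses $[x,e_1]\equiv e_0$ and $[x,f_1]\equiv f_0$ modulo $\mathcal J^{(1)}$, but this is more work than the Cauchy characteristic argument.

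\textbf{Vertical symmetries.} Your claim that a vertical symmetry vanishes ``by the same non-degeneracy'' also needs an actual computation. Let $i_0$ be the lowest level at which the vertical field $v$ has a component not vanishing identically.
\begin{itemize}
\item The paper uses the pairing $[e_{i_0},f_{-i_0}]\equiv(-1)^{i_0}\eta$ modulo $\mathcal J^{(-m)}=\mathcal F^{-2m-1}$.
\item Your $\mathrm{ad}\,x$ idea also works: $[v,x]\equiv-(v_e^{i_0}e_{i_0-1}+v_f^{i_0}f_{i_0-1})$ modulo $\mathcal J^{(i_0)}\supseteq\mathcal J^{(m)}=\mathcal F^{-1}$. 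So $v$ would fail to preserve $\mathcal F^{-1}$.
\end{itemize}
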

\begin{proof}
    Note that by Lemma \ref{Lemma: invol cond} and Proposition \ref{prop: m rels}, $\check {\mathcal{V}}$ is the full Cauchy characteristic of $\check{\mathcal{F}}^{-m-1}=\check\pi^*(D)$, so one can recover $(M,D)$ from \linebreak[4] $\big(\check{\mathcal{R}}_D',\{\check{\mathcal{F}}^i\}\big)$.
    Since $\big(\check{\mathcal{R}}_D',\{\check{\mathcal{F}}^i\}\big)$ was constructed without reference to a coordinate chart, each infinitesimal symmetry of $(M,D)$ near $x\in M$ lifts to an infinitesimal symmetry of $\big({\check{\mathcal{R}}}_D',\{\check{\mathcal{F}}^i\}\big)$ near any $\check\lambda\in \check\pi^{-1}(x)$, and by the recovery process, the lift of a nontrivial infinitesimal symmetry of $(M,D)$ is nontrivial.
    
    Conversely, any infinitesimal symmetry of $\big(\check{\mathcal{R}}_D',\{\check{\mathcal{F}}^i\}\big)$ near $\check\lambda\in\check{\mathcal{R}}_D'$ must preserve both $\check \pi^*(D) = \check{\mathcal{F}}^{-m-1}$ and $\check {\mathcal{V}}$. Therefore, such an infinitesimal symmetry passes to an infinitesimal symmetry of $D$ near $\check\pi(\check\lambda)$.
    
    Let us argue that the resulting infinitesimal symmetry of $(M,D)$ is nontrivial; we do so by showing any vertical infinitesimal symmetry $\check v$ of \allowbreak$\big({\check{\mathcal{R}}}_D',\{\check{\mathcal{F}}^i\}\big)$ are trivial. Using the frame constructed in Proposition \ref{Prop: JacobiFrame}, let $v = v_f^i \cdot f_i + v_e^i \cdot e_i$ be a lift of $\check v$ to a local section of $\mathcal V$, where $v_e^i$ and $v_f^i$ are smooth locally defined functions on $\mathcal R_D'$. Suppose for contradiction that there is a minimal integer $i_0>0$ such that $v_f^{i_0}(\lambda)$ or $v_e^{i_0}(\lambda)$ is nonzero for some $\lambda \in q^{-1}(\check\lambda)$. Assume without loss of generality that $v_e^{i_0}(\lambda)\neq 0$. By the third property of Proposition \ref{Prop: JacobiFrame},
    \begin{equation}
        [v,f_{-i_0}]\equiv (-1)^{i_0}\eta\mod \mathcal{J}^{-m}
    \end{equation}
    so that $v$ does not preserve $\mathcal{J}^{-m} = \mathcal F^{-2m-1}$. Therefore $v=0$, as desired.

    We therefore have a bijection between the infinitesimal symmetries of $\{\check{\mathcal F}^i\}$ near $\check\lambda$ and those of $D$ near $\check \pi(\lambda)$. The final assertion of the Proposition then follows from Remark \ref{Remark: Tanaka symm}.
\end{proof}

\subsubsection{Tanaka symbols associated with Jacobi-Tanaka algebras}
\label{subsubsection: symplectically flat}
In Tanaka theory, each Tanaka symbol $\mathfrak{m}$ (i.e., a negatively graded Lie algebra) has a corresponding \textit{Tanaka flat} filtered structure, constructed as follows: Let $\mathcal G(\mathfrak m)$ be the simply connected Lie group with Lie algebra $\mathfrak m$. The Tanaka flat filtration on $\mathcal G(\mathfrak m)$ is the left-invariant filtration given by
\begin{equation}
    \mathfrak m^{-i} = \bigoplus_{j\geq -i}\mathfrak m_j
\end{equation}
All other filtered manifolds with Tanaka symbol $\mathfrak m$ have algebras of infinitesimal symmetries which are bounded in dimension by that of the universal Tanaka prolongation $\mathfrak g(\mathfrak m)$, which is the algebra of infinitesimal symmetries for the flat model at any point\cite{Tanaka1970}\cite{Zelenko2009-qt}\cite{CapSlovak}.

% \ND{I think there are several questions that could be addressed here. Most basically, does $\mathcal D(\mathfrak m)$ always have Jacobi-Tanaka symbol $\mathfrak m$? Is the symplectically flat distribution always Tanaka flat? (No!) In constructing the symplectically flat $(3,6,8)$ distribution, we basically considered the Tanaka symbol of $\mathcal D(\mathfrak m)$ (which is a priori only filtered, not Tanaka flat).}

Now suppose that $\mathfrak m$ is a negatively graded Lie algebra which satisfies the conditions of Proposition \ref{prop: m rels} for some integer $m>0$. Let $\mathcal G(\mathfrak m)$ be the simply connected Lie group with Lie algebra $\mathfrak m$, and let $\mathfrak v$ be the left-invariant distribution $\langle \oline e_i,\oline f_i\rangle_{i>0}$ on $\mathcal G(\mathfrak m)$. Because $\mathfrak v$ is a Cauchy characteristic of the left-invariant distribution $\mathfrak m^{-m-1}$, the latter distribution descends to a distribution $\mathcal D(\mathfrak m)$ on the homogeneous space $\mathcal G(\mathfrak m)\to \mathcal M(\mathfrak{\mathfrak m}) = \mathcal G(\mathfrak m)/\mathcal G(\mathfrak v)$. The distribution $\mathcal D(\mathfrak m)$ will be called the \emph{symplectically flat distribution with Jacobi-Tanaka symbol $\mathfrak m$}.

The Tanaka symbol $\mathfrak{n}(\mathfrak m)$ of the distribution $\mathcal{D}(\mathfrak m)$ will be called the \emph{Tanaka symbol induced by the Jacobi-Tanaka algebra $\mathfrak m$}. A primary advantage of the symplectification procedure is that the set of Tanaka symbols induced by a given Jacobi-Tanaka algebra is significantly smaller than the set of all possible Tanaka symbols (see Appendix \ref{Appendix} for a detailed demonstration). Crucially, because the maximally symmetric models are guaranteed to appear exclusively within this restricted set, we can safely exclude all others from our analysis.

\begin{remark}
If $\mathfrak{m}$ is the Jacobi-Tanaka algebra corresponding to a rank $3$ distribution of maximal class (equivalently, for which $\mathcal{D}(\mathfrak{m})$ is of maximal class), then it can be shown \cite{doubrov2008rank3, doubrov2016JacobiCurves} that $\mathcal{D}(\mathfrak{m})$ is equivalent to the flat distribution with Tanaka symbol $\mathfrak{n}(\mathfrak{m})$. However, in general, without the assumption of maximal class, the symplectically flat distribution $\mathcal{D}(\mathfrak{m})$ may not be isomorphic to the flat distribution with Tanaka symbol $\mathfrak{n}(\mathfrak{m})$ so that the symplectification of the flat distribution with Tanaka symbol $\mathfrak n(\mathfrak m)$ may have Jacobi-Tanaka symbol different from $\mathfrak m$, and subsequently the assignment $\mathfrak{m} \mapsto \mathfrak{n}(\mathfrak{m})$ may not be injective. 
\end{remark}

By Proposition \ref{Prop: Jacobi alg bij}, the infinitesimal symmetries of a distribution $D$ with Jacobi-Tanaka symbol $\mathfrak m$ are in bijection with the infinitesimal symmetries of $\big(\check{\mathcal R}_D',\{\check {\mathcal {F}}^{-i}\}\big)$, and are therefore bounded in dimension by the universal Tanaka prolongation $\mathfrak g(\mathfrak m)$, which is the infinitesimal symmetry algebra of $\mathcal D(\mathfrak m)$.

\section{Proof of the conjecture of maximality of class in the case of (3,6) and (3,7)}
\label{section: dim 6-7}

In this section, we will prove Theorem \ref{Thm: (3,6) and (3,7)} using the machinery of the previous section. Let $D$ be a bracket-generating distribution on $M^n$ satisfying the hypotheses of the theorem. From the observations of Section \ref{subsection: Jacobi Symbol}, the distribution $D$ is of maximal class at a point $p\in M$ if and only if equality holds in \eqref{num_of_boxes} for the $k\geq 1$ and $\ell\geq 0$ which define the Jacobi symbol at a generic point $\lambda\in \pi^{-1}(p)$; that is, if $2k+\ell=n-4$. 

First, let us treat the case where $n=6$. The condition \eqref{num_of_boxes} and the conditions that $k\geq 1$ and $\ell\geq 0$ imply that $k=1$ and $\ell=0$, which gives equality in \eqref{num_of_boxes} and implies that $p$ is a point of maximal class for $D$, as desired.

When $n=7$, the only possible choices for $(k,\ell)$ are $(1,1)$ and $(1,0)$. In the first case, we have equality in \eqref{num_of_boxes}, so $D$ is of maximal class at $p$. We now show that the Jacobi symbol $(1,0)$ does not in fact arise from any bracket-generating distribution $D$. Suppose that $n=7$ and the Jacobi symbol of $D$ at $\check\lambda\in \check\pi^{-1}(p)$ is given by $(k,\ell)=(1,0)$. Let $\big(\oline x,\{\oline e_i\}_{i=-k}^{k+\ell},\{\oline f_j\}_{j=-k-\ell}^k,\oline \eta\big)$ be as in Proposition \ref{prop: m rels}; the first four graded pieces $\mathfrak{m}_i$ of the Jacobi-Tanaka algebra $\mathfrak{m} = \mathfrak{m}(\check\lambda)$ are
\begin{gather}
    {\mathfrak{m}}_{-1} = \langle \oline x, \oline e_1, \oline f_1\rangle,\quad {\mathfrak{m}}_{-2} =\langle \oline e_0, \oline f_0\rangle,\quad 
    {\mathfrak{m}}_{-3} = \langle \oline e_{-1}, \oline f_{-1}\rangle,\quad {\mathfrak{m}}_{-4}=\langle \eta\rangle
\end{gather}
Because $\ell = 0$, the symbol $\mathfrak m$ is fundamental. Utilizing this fact along with Corollary \ref{cor: more m rels} yields
\begin{equation}
    {\mathfrak{m}}_{-5} = [{\mathfrak{m}}_{-1},{\mathfrak{m}}_{-4}] = 0.
\end{equation}
Therefore $\mathrm{rank}(\check{\mathcal{F}}^{-4}) = \mathrm{dim}(\mathfrak{m}) = 8$ . However, $\dim(\check{\mathcal{R}}_D') = \dim(\mathcal{R}_D')-\dim(V_{2}) = (2n-4)-1 = 9$. By Remark \ref{Remark: D and F}, this implies that $D$ is not bracket-generating at $p$, which contradicts our assumption. We have now proven Theorem \ref{Thm: (3,6) and (3,7)}.

\section{A (3,8) Distribution of Nonmaximal Class}
\label{section: dim 8}

Now let us analyze rank $3$ distributions in dimension $8$. The restriction \eqref{num_of_boxes} on the Jacobi symbol $(k,\ell)$ implies that at most four Jacobi symbols arise in this situation, namely the rectangular diagrams $(2,0)$ and $(1,0)$ and the skew diagrams $(1,2)$ and $(1,1)$. Among these Jacobi symbols, $(2,0)$ and $(1,2)$ yield equality in \eqref{num_of_boxes} and consequently arise from distributions of maximal class. The Jacobi symbol $(1,0)$ does not arise from a $(3,8)$ distribution, by same argument that concluded Section \ref{section: dim 6-7}. Consequently, only distributions with Jacobi symbol $(1,1)$ are of nonmaximal class, and this symbol corresponds to the skew Young diagram

% \begin{figure}[H]
    \begin{center}
        \begin{tikzpicture}[baseline=(current bounding box.center)]
            % Place the ytableau in a node
            \node (table) {
                \begin{ytableau}
                      e_{2} & e_{1} &  e_{0} &  e_{-1} & \none \\
                     \none & f_{1} &  f_{0} & f_{-1} & f_{-2} \\
                \end{ytableau}
            };
        \alttext{A skew Young diagram, comprised of two rows of four boxes. The boxes of the
first row are labeled e subscript 2, e subscript 1, e subscript 0, e subscript
minus 1, and the boxes of the second row are labeled f subscript 1, f subscript
0, f subscript minus 1, f subscript minus 2. Columns share the same index; that
is, the box labeled e subscript 1 is above the box labeled f subscript 1, and so
on.}
        \end{tikzpicture}
    \end{center}
    % \caption{The decorated Jacobi symbol corresponding to $(k,\ell) = (1,1)$}
% \end{figure}

In fact, if a $(3,8)$ distribution has the above Jacobi symbol, its Jacobi-Tanaka algebra can be determined:

\begin{prop}
    \label{Prop: (3,8) Nonmax Jacobi Alg}
    Let $D$ be a $(3,8)$ distribution with $6$-dimensional square which is not of maximal class. At any point ${\check\lambda} \in \check{\mathcal{R}}_D'$, the Jacobi-Tanaka algebra ${\mathfrak s}={\mathfrak s}({\check\lambda})$ of $D$ has a basis
    \begin{gather}
        \mathfrak s_{-1} = \langle \oline x,\oline e_2\rangle, \quad\mathfrak s_{-2} = \langle \oline e_1,\oline f_1\rangle,\quad \mathfrak s_{-3} = \langle \oline e_{0},\oline f_{0}\rangle,\quad \mathfrak s_{-4} = \langle \oline e_{-1},\oline f_{-1}\rangle
        \\
        \mathfrak s_{-5} = \langle \oline f_{-2}\rangle,\quad \mathfrak s_{-6} = \langle \oline \eta\rangle, \quad \mathfrak s_{-7} = \langle \oline \nu\rangle
    \end{gather}
    with nontrivial bracket relations
    \begin{align}
        & \label{JacobiRel1}
        [\oline x, \oline e_j] = \oline e_{j-1} \quad \text{for }j=0,1,2
        & [\oline x, \oline f_j] = \oline f_{j-1} \quad \text{for }j=-1,0,1
        \\
        \label{JacobiRel2}
       & [\oline e_j,\oline f_{-j}] = (-1)^j\oline \eta \quad\text{for }j=-1,0,1,2
        &-[\oline f_{1}, \oline f_{-2}] = [\oline f_{0}, \oline f_{-1}] = \oline \nu
    \end{align}
\end{prop}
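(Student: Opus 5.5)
The plan is to combine Proposition \ref{prop: m rels} and Corollary \ref{cor: more m rels}, which determine $\mathfrak s$ modulo $\mathfrak m'$, with a dimension count, and to settle the missing relations by Jacobi-identity manipulations. As discussed above, a nonmaximal $(3,8)$ distribution has Jacobi symbol $(k,\ell)=(1,1)$, so $m=2$ and $\mathfrak m'=\bigoplus_{j\le -7}\mathfrak s_j$. Proposition \ref{prop: m rels} then supplies the basis $\oline x,\oline e_2,\dots,\oline e_{-1},\oline f_1,\dots,\oline f_{-2},\oline\eta$ of weights $-1$ through $-6$, with the gradings as in the statement. It also supplies relations \eqref{JacobiRel1} and the first family in \eqref{JacobiRel2}, and all other brackets among these elements with values in weights $\ge -6$ vanish. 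In particular $[\oline x,\oline f_{-2}]=0$, $[\oline x,\oline e_{-1}]=0$, $[\oline f_1,\oline e_{-1}]=0$, $[\oline e_1,\oline e_{-1}]=0$ and $[\oline f_1,\oline f_{-1}]=0$, the last two also following from Lemma \ref{Lemma: [e_i,e_-i]}. Corollary \ref{cor: more m rels} adds $[\oline x,\oline\eta]=[\oline e_i,\oline\eta]=[\oline f_i,\oline\eta]=0$ for $i>0$.

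\textbf{Dimension count.} Here $\dim\mathcal R_D'=2n-4=12$, and $V_{m+1}$ has rank $n-4-2k-\ell=1$, so $\dim\check{\mathcal R}_D'=11$. Since $D$ is bracket-generating, Remark \ref{Remark: D and F} gives $\dim\mathfrak s=11$. The ten listed elements of weights $-1,\dots,-6$ therefore leave exactly one dimension in weights $\le -7$.

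\textbf{Structure of $\mathfrak s_{-7}$.} By Remark \ref{Remark: generators of m}, for $i>2$ we have $\mathfrak s_{-i}=[\mathfrak s_{-1},\mathfrak s_{-i+1}]+[\mathfrak s_{-2},\mathfrak s_{-i+2}]$. Hence
\[
\mathfrak s_{-7}=\langle [\oline x,\oline\eta],[\oline e_2,\oline\eta],[\oline e_1,\oline f_{-2}],[\oline f_1,\oline f_{-2}]\rangle .
\]
The first two brackets vanish by Corollary \ref{cor: more m rels}. Applying $\operatorname{ad}\oline x$ to $[\oline e_2,\oline f_{-2}]=\oline\eta$ and using $[\oline x,\oline\eta]=0$ and $[\oline x,\oline f_{-2}]=0$ gives $[\oline e_1,\oline f_{-2}]=0$. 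Thus $\mathfrak s_{-7}$ is spanned by $[\oline f_1,\oline f_{-2}]$.

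If $\mathfrak s_{-7}$ were zero, then $\mathfrak s_{-8}=[\oline e_1,\oline\eta]+[\oline f_1,\oline\eta]=0$, and inductively all lower pieces would vanish. This would give $\dim\mathfrak s=10$, contradicting the count. So $\mathfrak s_{-7}$ is one-dimensional, and dimension forces $\mathfrak s_{-j}=0$ for $j\ge 8$.

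\textbf{The remaining weight $-7$ brackets.} Expand $\oline f_{-2}=[\oline x,\oline f_{-1}]$ in $[\oline f_1,\oline f_{-2}]$ and use $[\oline f_1,\oline f_{-1}]=0$. This yields $[\oline f_1,\oline f_{-2}]=-[\oline f_0,\oline f_{-1}]$, so setting $\oline\nu:=[\oline f_0,\oline f_{-1}]$ gives the second family in \eqref{JacobiRel2}. The other weight $-7$ brackets are handled the same way, by applying $\operatorname{ad}\oline x$ to a weight $-6$ relation.
\begin{itemize}
\item From $[\oline e_1,\oline f_{-1}]=-\oline\eta$ we get $[\oline e_0,\oline f_{-1}]=-[\oline e_1,\oline f_{-2}]=0$.
\item From $[\oline e_1,\oline e_{-1}]=0$ and $[\oline x,\oline e_{-1}]=0$ we get $[\oline e_0,\oline e_{-1}]=0$.
\item From $[\oline f_1,\oline e_{-1}]=0$ we get $[\oline f_0,\oline e_{-1}]=0$.
\end{itemize}
Every bracket landing in weight $\le -8$, such as $[\oline x,\oline\nu]$ or $[\oline e_{-1},\oline f_{-1}]$, vanishes because those pieces are zero.

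The main obstacle is the step showing $\mathfrak s_{-7}\neq 0$. It uses the bracket-generating hypothesis through the exact value $\dim\check{\mathcal R}_D'=11$, together with the generation property of Remark \ref{Remark: generators of m}. Care is needed that the relations quoted from Proposition \ref{prop: m rels} are only used in weights $\ge -6$, where they hold exactly rather than modulo $\mathfrak m'$.
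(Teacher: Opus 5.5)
Your argument is essentially the paper's proof. It uses the same reduction $\mathfrak s_{-7}=\langle[\oline f_1,\oline f_{-2}]\rangle$ via $[\oline e_1,\oline f_{-2}]=[\oline x,\oline\eta]=0$, the same contradiction between $\dim\mathfrak s=10$ and $\dim\check{\mathcal R}_D'=11$ under bracket generation, and the same Jacobi-identity manipulations for the remaining weight $-7$ brackets. You also treat $[\oline f_0,\oline e_{-1}]$ explicitly, which the paper's final display leaves implicit.

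One correction: $[\oline f_1,\oline e_{-1}]$ is not zero. It is the $j=-1$ case of $[\oline e_j,\oline f_{-j}]=(-1)^j\oline\eta$, so $[\oline f_1,\oline e_{-1}]=\oline\eta$. Your third bullet still goes through: applying $\operatorname{ad}\oline x$ with $[\oline x,\oline\eta]=0$ and $[\oline x,\oline e_{-1}]=0$ still gives $[\oline f_0,\oline e_{-1}]=0$.
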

\begin{proof}
    Let $\big(\oline x,\{\oline e_i\}_{i=-k}^{k+\ell},\{\oline f_j\}_{j=-k-\ell}^k,\oline \eta\big)$ be as in Proposition \ref{prop: m rels}. All the bracket relations between ${\mathfrak s}_i$ and ${\mathfrak s}_j$ where $i+j\geq-6$ are given by Proposition \ref{prop: m rels}. Further, Corollary \ref{cor: more m rels} implies that $[{\mathfrak s}_{-1},{\mathfrak s}_{-6}]=0$.
    
    Now consider the relations of ${\mathfrak s}_{-2}$ with ${\mathfrak s}_{-5}$. The Jacobi identity yields 
    \begin{equation}
        [\oline e_1,\oline f_{-2}] = \big[[\oline x,\oline e_2],\oline f_{-2}\big] =  \big [\oline x,[\oline e_2,\oline f_{-2}]\big] + 0 = [\oline x, \oline \eta] = 0
    \end{equation}
    Thus Remark \ref{Remark: generators of m} implies that ${\mathfrak s}_{-7} = \big\langle [\oline f_1,\oline f_{-2}]\big\rangle$. If $[\oline f_1,\oline f_{-2}]=0$, then it is easy to show ${\mathfrak s}_j=0$ for all $j<-7$, and consequently that $\dim\big({\mathfrak s}({\check\lambda})\big) = 10<\dim(\check{\mathcal R}_D') = 11$. However, by Remark \ref{Remark: D and F}, this implies that $D$ is not bracket-generating at $p$, which contradicts our assumption. 
    
    We may therefore define $\oline \nu = -[\oline f_1,\oline f_{-2}]$, which generates ${\mathfrak s}_{-7}$. By the dimension count, ${\mathfrak s}_{j}=0$ for $j<-7$, and the final required relation follows from the Jacobi identity:
    \begin{gather}
        [\oline f_0,\oline f_{-1}] = \big[[\oline x,\oline f_1],\oline f_{-1}\big] = \big[[\oline x,\oline f_{-1}],\oline f_1\big] = [\oline f_{-2},\oline f_1] = \oline \nu
        \\
        [\oline e_0, \mathfrak s_{-4}] = \big[[\oline x , \oline e_{1}],\mathfrak s_{-4}\big] =  [\oline e_{1},\mathfrak s_{-5}] + \big[\oline x,[\oline e_1,\mathfrak s_{-4}]\big] = 0.
    \end{gather}
\end{proof}

\subsection{A Distribution of Nonmaximal Class}
\label{subsection: nonmax model}

% With the Jacobi-Tanaka algebra $\mathfrak s$ of Proposition \ref{Prop: (3,8) Nonmax Jacobi Alg} in hand, we could define the distribution $\mathfrak D$ to be the symplectically flat distribution $\mathcal D(\mathfrak s)$, as described in Section \ref{subsubsection: symplectically flat}. However, the resulting distribution is also Tanaka flat, and we therefore construct $\mathfrak D$ as the Tanaka flat distribution for its Tanaka symbol $\mathfrak n$, which is given explicitly below.

The Jacobi-Tanaka algebra $\mathfrak s$ of Proposition \ref{Prop: (3,8) Nonmax Jacobi Alg} motivates the construction of the distribution $\mathfrak D$ which follows. One can obtain $\mathfrak D$ on the homogeneous space $\mathcal G(\mathfrak s)/\mathcal{G}(\mathfrak v)$ where $\mathcal G(\mathfrak s)$ is the simply connected Lie group with Lie algebra $\mathfrak{s}$ and $\mathcal G(\mathfrak v)$ is the subgroup corresponding to the subalgebra $\mathfrak v = \langle \oline e_i,\oline f_i\rangle_{i>0}$. On this homogeneous space, $\mathfrak D$ is the distribution corresponding to the image of the left invariant filtrand $\mathfrak s^{-m-1}$ on $\mathcal G(\mathfrak s)$. However, the resulting distribution is also Tanaka flat; it is therefore more clear to construct $\mathfrak D$ as the Tanaka flat distribution for its Tanaka symbol $\mathfrak n$, which is given explicitly below.

Let $\mathfrak{n}$ be the Lie subalgebra of $\mathfrak s$ generated by $\oline x, \oline e_0$, and $ \oline f_0$; the basis and nontrivial relations of $\mathfrak n$ are given in \eqref{n basis} and \eqref{n rels}, respectively. Also equip $\mathfrak n$ with the grading described in \eqref{n basis}; that is, the grading where $\mathfrak n_{-1} = \langle \oline x, \oline e_0, \oline f_0\rangle$. The grading on $\mathfrak{n}$ is given explicitly by 
\begin{equation}
    \mathfrak{n} = \mathfrak{n}_{-1}\oplus\mathfrak{n}_{-2}\oplus\mathfrak{n}_{-3} = \langle \oline x,\oline e_0, \oline f_0\rangle \oplus \langle \oline e_{-1},\oline f_{-1},\oline \eta\rangle \oplus \langle \oline f_{-2},\oline \nu\rangle
\end{equation}
and the basis elements have nontrivial relations
\begin{eqnarray}
    ~&[\oline x, \oline e_0] = \oline e_{-1},\quad [\oline x, \oline f_{0}] = \oline f_{-1},\quad [\oline x, \oline f_{-1}] = \oline f_{-2},\\
    ~&[\oline e_0, \oline f_0]=\oline\eta,\quad [\oline f_0, \oline f_{-1}]=\oline\nu.
\end{eqnarray}
Now let $N$ be the simply connected Lie group with Lie algebra $\mathfrak{n}$ and let $\mathfrak{D}$ be the left-invariant distribution on $N$ given by $\mathfrak{n}_{-1}$. This distribution is called the \textit{Tanaka flat distribution} associated to the graded Lie algebra $\mathfrak{n}$.

The algebra of infinitesimal symmetries of $\mathfrak{D}$ is naturally isomorphic to the universal Tanaka prolongation $\mathfrak{g}(\mathfrak{n})$. We will soon see that $\mathfrak{g}(\mathfrak{n})$ and $\mathfrak g(\mathfrak s)$ are isomorphic as Lie algebras, though their gradings differ.

\subsection{The Prolonged Symbol}
\label{subsection: prol symb}
In order to give the universal Tanaka prolongations $\mathfrak{g} = \mathfrak{g}(\mathfrak s)$ and $\mathfrak g(\mathfrak n)$ explicitly, we will abstractly define an adapted basis for $\mathfrak{g}$ along with a Lie algebra involution $\theta$ on $\mathfrak{g}$, which will allow us to list fewer bracket relations in defining the Lie algebra structure. We will then identify $\mathfrak s$ and $\mathfrak n$ with the negative parts of $\mathfrak{g}$ under two gradings and show that under these identifications $\mathfrak{g}$ is the universal Tanaka prolongation of $\mathfrak s$ and $\mathfrak n$, respectively.

Define Lie algebra
\begin{equation}
    \label{def g}
    \mathfrak{g} = (\mathfrak{g}_2\oplus \mathbb{R})\ltimes W
\end{equation}
where $W$ is an isomorphic copy of the adjoint module of $\mathfrak g_2$, considered as an abelian Lie algebra, on which the line $\mathbb R$ acts by scaling.

Let us first define the bracket relations among a basis 
\begin{equation}
    \mathcal{B}_{\mathfrak{g}_2}=\{\oline{h}_1,\oline{h}_2,\oline{x},\oline{f}_1,\oline{f}_0,\oline{f}_{-1},\oline{f}_{-2},\oline{\nu},\oline{x}',\oline{f}_1',\oline{f}_0',\oline{f}_{-1}',\oline{f}_{-2}',\oline{\nu}'\}
\end{equation}
for the Levi subalgebra $\mathfrak{g}_2$ of $\mathfrak{g}$. Let $\mathfrak{h} = \mathrm{span}\{\oline{h}_1,\oline{h}_2\}$ be the Cartan subalgebra, and define a linear map $\theta$ on this basis by
\begin{gather}
    \label{def theta}
    \theta(a) = a'\quad\text{and}\quad \theta(a') = a \quad\text{for each }a\in \{\oline{x},\oline{f}_1,\oline{f}_0,\oline{f}_{-1},\oline{f}_{-2},\oline{\nu}\}
    \\
    \theta(\oline{h}_i)=-\oline{h}_i\quad \text{for }i=1,2
\end{gather}

All bracket relations among elements of the basis can be deduced from the following multiplication tables and the fact that $\theta$ is a Lie algebra involution.

% Optional: match the original array's row spacing (array default is 1.0).
% \renewcommand{\arraystretch}{1.0}

\begin{center}
% \tagpdfsetup{table/header-rows={1}, table/header-columns={1}}
\begin{tabular}{c|cccccccc}
& $\oline{h}_1$ & $\oline{h}_2$ & $\oline{x}$ & $\oline{f}_1$ & $\oline{f}_0$ & $\oline{f}_{-1}$ & $\oline{f}_{-2}$ & $\oline{\nu}$ \\ \hline
$\oline{h}_1$ & $0$ & $0$ & $2\oline{x}$ & $-3\oline{f}_1$ & $-\oline{f}_0$ & $\oline{f}_{-1}$ & $3\oline{f}_{-2}$ & $0$ \\
$\oline{h}_2$ & $0$ & $0$ & $-\oline{x}$ & $2\oline{f}_1$ & $\oline{f}_0$ & $0$ & $-\oline{f}_{-2}$ & $\oline{\nu}$ \\
$\oline{x}$  & $-2\oline{x}$ & $\oline{x}$ & $0$ & $\oline{f}_0$ & $\oline{f}_{-1}$ & $\oline{f}_{-2}$ & $0$ & $0$ \\
$\oline{f}_1$ & $3\oline{f}_1$ & $-2\oline{f}_1$ & $-\oline{f}_0$ & $0$ & $0$ & $0$ & $-\oline{\nu}$ & $0$ \\
$\oline{f}_0$ & $\oline{f}_0$ & $-\oline{f}_0$ & $-\oline{f}_{-1}$ & $0$ & $0$ & $\oline{\nu}$ & $0$ & $0$ \\
$\oline{f}_{-1}$ & $-\oline{f}_{-1}$ & $0$ & $-\oline{f}_{-2}$ & $0$ & $-\oline{\nu}$ & $0$ & $0$ & $0$ \\
$\oline{f}_{-2}$ & $-3\oline{f}_{-2}$ & $\oline{f}_{-2}$ & $0$ & $\oline{\nu}$ & $0$ & $0$ & $0$ & $0$ \\
$\oline{\nu}$ & $0$ & $-\oline{\nu}$ & $0$ & $0$ & $0$ & $0$ & $0$ & $0$ \\
\end{tabular}
\end{center}
\vspace{0.5cm}
 
\begin{center}
\scalebox{0.9}{
\begin{tabular}{c|cccccc}
 & $\oline{x}'$ & $\oline{f}_1'$ & $\oline{f}_0'$ & $\oline{f}_{-1}'$ & $\oline{f}_{-2}'$ & $\oline{\nu}'$ \\ \hline
$\oline{x}$  & $-\oline{h}_1$ & $0$ & $-3\oline{f}_1'$ & $-4\oline{f}_0'$ & $-3\oline{f}_{-1}'$ & $0$ \\
$\oline{f}_1$ & $0$ & $-\oline{h}_2$ & $\oline{x}'$ & $0$ & $0$ & $\oline{f}_{-2}'$ \\
$\oline{f}_0$ & $3\oline{f}_1$ & $-\oline{x}$ & $-\oline{h}_1 - 3\oline{h}_2$ & $4\oline{x}'$ & $0$ & $-3\oline{f}_{-1}'$ \\
$\oline{f}_{-1}$ & $4\oline{f}_0$ & $0$ & $-4\oline{x}$ & $-8\oline{h}_1 - 12\oline{h}_2$ & $12\oline{x}'$ & $12\oline{f}_0'$ \\
$\oline{f}_{-2}$ & $3\oline{f}_{-1}$ & $0$ & $0$ & $-12\oline{x}$ & $-36\oline{h}_1 - 36\oline{h}_2$ & $-36\oline{f}_1'$ \\
$\oline{\nu}$ & $0$ & $-\oline{f}_{-2}$ & $3\oline{f}_{-1}$ & $-12\oline{f}_0$ & $36\oline{f}_1$ & $-36\oline{h}_1 - 72\oline{h}_2$ \\
\end{tabular}
}

\end{center}
\vspace{0.5cm}

One can check that the Lie algebra we have constructed is indeed isomorphic to the split real form $\mathfrak g_2$ of the exceptional Lie algebra $\mathfrak{g}_2^\mathbb C$. In fact, the basis
\begin{equation}
    \Big\{\oline{h}_1,\oline{h}_2, \oline{x}, \oline{f}_1,-\oline{f}_0,\frac 12 \oline{f}_{-1}, \frac 16 \oline{f}_{-2}, -\frac16\oline{\nu}, \oline{x}',\oline{f}_1', -\oline{f}_0', \frac12\oline{f}_{-1}',\frac 16 \oline{f}_{-2}', -\frac16 \oline{\nu}'\Big\}
\end{equation}
is a Chevalley basis, and $\theta$ induces a Cartan involution of the complexified simple Lie algebra $\mathfrak{g}_2^\mathbb{C}$. Now choose $W$ to be the abelian Lie algebra with basis
\begin{equation}
    \mathcal{B}_W=\{\oline{w}_1,\oline{w}_2,\oline{y},\oline{e}_2,\oline{e}_1,\oline{e}_{0},\oline{e}_{-1},\oline{\eta}, \oline{y}',\oline{e}_2',\oline{e}_1',\oline{e}_{0}',\oline{e}_{-1}',\oline{\eta}'\}
\end{equation}
and let $\varphi:\mathfrak{g}_2\to W$ be the linear isomorphism defined by
\begin{equation}
    \varphi=\begin{cases}
        \oline{x}\mapsto \oline{y}
        \\
       \oline{f}_i\mapsto \oline{e}_{i+1}\text{ for }-2\leq i \leq 1
        \\
        \oline{\nu}\mapsto-\oline{\eta}
        \\
        \oline{h}_i\mapsto \oline{w}_i \text{ for }1\leq i \leq 2
    \end{cases}
\end{equation}
and $\varphi(a') = \varphi(a)'$ for each $a\in \{\oline{x},\oline{f}_1,\oline{f}_0,\oline{f}_{-1},\oline{f}_{-2},\oline{\nu}\}$. Let $\mathfrak{g}_2$ act on $W$ via $\rho:\mathfrak{g}_2\to \mathfrak{gl}(W)$ defined by
\begin{equation}
    \rho(a)(b) = \varphi\Big(\mathrm{ad}(a)\big(\varphi^{-1}(b)\big)\Big).
\end{equation}
Under this action, $W$ is isomorphic to the adjoint module of $\mathfrak{g}_2$. Finally, let the copy of $\mathbb{R}$ in the radical of $\mathfrak{g}$ be spanned by an element $r$ which acts as the identity on $W$. Extending the action of $\mathfrak{g}_2$ on $W$ by the trivial action of $\mathfrak{g_2}$ on $\langle r\rangle$, we have now constructed an explicit basis $\mathcal{B} = \mathcal{B}_{\mathfrak{g}_2}\cup\{r\}\cup\mathcal{B}_{W}$ for the universal Tanaka prolongation $\mathfrak{g} = \mathfrak{g}_2\ltimes\big(\langle r\rangle \ltimes W\big)$ along with all its bracket relations. 

Now let us establish two gradings $\mathrm{wt}_1$ and $\mathrm{wt}_2$ on $\mathfrak{g}$. Fix elements
\begin{equation}
    \label{grading elts}
    \oline H_1 = -2\oline h_1-3\oline h_2 + r,\quad \oline H_2 = -4\oline{h}_1-7\oline{h}_2 + r.
\end{equation} For $i=1,2$, define $\mathrm{wt}_i$ by
\begin{equation}
    \label{def Jacobi weight 1}
    [\oline H_i,a] = \mathrm{wt}_i(a)\cdot a\quad \text{for each }a\in \mathcal{B}
\end{equation}

This data can be expressed in the following diagrams, which are weight diagrams for the Cartan subalgebra $\langle \oline{h}_1,\oline{h}_2\rangle$ of $\mathfrak{g}_2$: 

% \begin{figure}[H]
    \begin{center}
    \begin{tikzpicture}[baseline=(current bounding box.center)]    
        % LEFT PARENTHESIS
        \draw[thick] (-4.6,-1.5) .. controls (-4.9,0) .. (-4.6,1.5);
        
    \begin{scope}[shift={(-2.5,0)}, scale=1.5]
        % Triangle 1 vertices (original angles 0, 120, 240 rotated by -30)
        \coordinate (T0) at ({cos(-30)}, {sin(-30)});
        \coordinate (T120) at ({cos(90)}, {sin(90)});
        \coordinate (T240) at ({cos(210)}, {sin(210)});
        % Triangle 2 vertices (original angles 60, 180, 300 rotated by -30)
        \coordinate (R60) at ({cos(30)}, {sin(30)});
        \coordinate (R180) at ({cos(150)}, {sin(150)});
        \coordinate (R300) at ({cos(270)}, {sin(270)});
        \filldraw[black] (0,0) circle (0.02);
        \path[name path=TA] (T0) -- (T120);
        \path[name path=TB] (T120) -- (T240);
        \path[name path=TC] (T240) -- (T0);
        \path[name path=RA] (R60) -- (R180);
        \path[name path=RB] (R180) -- (R300);
        \path[name path=RC] (R300) -- (R60);
        \path[name intersections={of=TA and RC, by=B0}];
        \path[name intersections={of=TA and RA, by=B60}];
        \path[name intersections={of=TB and RA, by=B120}];
        \path[name intersections={of=TB and RB, by=B180}];
        \path[name intersections={of=TC and RB, by=B240}];
        \path[name intersections={of=TC and RC, by=B300}];
        
        % Fixed y-coordinate for the weight rows
        \coordinate (wght1row) at (0,-1.6);
        \coordinate (wght2row) at (0,1.8);
    
        % Fixed y-coordinates for top and bottom of dotted lines
        \coordinate (toprow) at (0,-0.8);
        \coordinate (bottomrow) at (0,-1.45);
    
        % Row labels
        \coordinate (labelcol) at (-1.0, 0);
        \node[anchor=east] at (labelcol |- wght1row) {$\mathrm{wt}_1$};
        \node[anchor=east] at (labelcol |- wght2row) {$\mathrm{wt}_2$};
    
        % Drop-down lines for wght1
        \draw[white!60!black] (R180) -- (T240 |- bottomrow);
        \draw[white!60!black] (B180) -- (B180 |- bottomrow);
        \draw[white!60!black] (B120) -- (B240 |- bottomrow);
        \draw[white!60!black] (T120) -- (R300 |- bottomrow);
        \draw[white!60!black] (B60) -- (B60 |- bottomrow);
        \draw[white!60!black] (R60) -- (R60 |- bottomrow);
        \draw[white!60!black] (B0) -- (B0 |- bottomrow);
        
        % wght1 labels
        \node at (T240 |- wght1row) {3};
        \node at (B180 |- wght1row) {2};
        \node at (B240 |- wght1row) {1};
        \node at (R300 |- wght1row) {0};
        \node at (B60 |- wght1row) {-1};
        \node at (B0 |- wght1row) {-2};
        \node at (R60 |- wght1row) {-3};
        
        % wght2 labels
        \foreach \lbl [count=\i, evaluate=\i as \x using {-0.5705 + (\i-1)*0.295}] in {7,5,3,1,-1,-3,-5,-7} {
            \node at (\x, 1.6) {\footnotesize{\lbl}};
        }
        \foreach \lbl [count=\i, evaluate=\i as \x using {-0.05 + (\i-1)*0.295}] in {4,2,0,-2,-4} {
            \node at (\x, 1.9) {\footnotesize{\lbl}};
        }
        % \node at (-0.575,1.6) {\footnotesize{a}};
        
        \begin{scope} % odd wght2
            \clip (-1.5, -1) rectangle (1.5, 1.75);
            % Auxiliary points for w * \oline{x}' (weights != -1, 0, 1)
            \coordinate (auxB180) at ({-2*sqrt(3)/3}, {-2});     % w=5
            \coordinate (auxT120) at ({-sqrt(3)/3}, {-1});       % w=3
            \coordinate (auxR300) at ({sqrt(3)/3}, {1});         % w=-1
            \coordinate (auxB0) at ({2*sqrt(3)/3}, {2});         % w=-3
            
            % Auxiliary points for wght2 0
            \coordinate (auxW0a) at (0, 0);      % w=0
            \coordinate (auxW0b) at ({sqrt(3)/6}, 1);
        
            % Angled lines for wght2 through node and auxiliary point
            \draw[dotted, red] ($(B180)!-1!(auxB180)$) -- ($(B180)!0!(auxB180)$);  % w=5
            \draw[dotted, red] ($(T120)!0!(auxT120)$) -- ($(T120)!-1!(auxT120)$);  % w=3
            \draw[dotted, red] ($(R300)!0!(auxR300)$) -- ($(R300)!2!(auxR300)$);  % w=-1
            \draw[dotted, red] ($(B0)!0!(auxB0)$) -- ($(B0)!2!(auxB0)$);          % w=-3
            \draw[dotted, red] ($(auxW0a)!0!(auxW0b)$) -- ($(auxW0a)!3!(auxW0b)$);% w=1
        \end{scope}
    
        \begin{scope} % even wght2
            \clip (-1.5, -1) rectangle (1.5, 1.40);
            % Auxiliary points for w * \oline{x}' (weights != 0, 1, 2)
            \coordinate (auxT240) at ({-5*sqrt(3)/6}, {-5/2});   % w=6
            \coordinate (auxR180) at ({-7*sqrt(3)/6}, {-7/2});   % w=8
            \coordinate (auxB120) at ({-sqrt(3)/2}, {-3/2});     % w=4
            \coordinate (auxR60) at ({5*sqrt(3)/6}, {5/2});      % w=-4
            \coordinate (auxB300) at ({sqrt(3)/2}, {3/2});       % w=-2
            \coordinate (auxT0) at ({7*sqrt(3)/6}, {7/2});       % w=-6
            
            % Auxiliary points for weights -1, 0, 1
            \coordinate (auxB60) at (0, -1/2);   % w=0
            \coordinate (auxB240) at (0, 1/2);   % w=2
        
            % Angled lines for wght2 through node and auxiliary point
            \draw[white!50!red] ($(T240)!-1!(auxT240)$) -- ($(T240)!0!(auxT240)$); % w=6
            \draw[white!50!red] ($(R180)!0!(auxR180)$) -- ($(R180)!-1!(auxR180)$); % w=8
            \draw[white!50!red] ($(B120)!-1!(auxB120)$) -- ($(B120)!0!(auxB120)$); % w=4
            \draw[white!50!red] ($(R60)!0!(auxR60)$) -- ($(R60)!2!(auxR60)$);     % w=-4
            \draw[white!50!red] ($(B300)!0!(auxB300)$) -- ($(B300)!2!(auxB300)$); % w=-2
            \draw[white!50!red] ($(T0)!0!(auxT0)$) -- ($(T0)!2!(auxT0)$);         % w=-6
            \draw[white!50!red] ($(B60)!-1!(auxB60)$) -- ($(B60)!0!(auxB60)$);     % w=0
            \draw[white!50!red] ($(B240)!0!(auxB240)$) -- ($(B240)!2!(auxB240)$); % w=2
        \end{scope}
    
        \draw[blue, thick] (T0) -- (T120) -- (T240) -- cycle;
        \draw[blue, thick] (R60) -- (R180) -- (R300) -- cycle;
    
        \node[below right] at (B300) {\contour{white}{$\oline{f}_0$}};
        \node[right] at (B0) {\contour{white}{$\oline{f}_{-1}$}};
        \node[right] at (T0) {\contour{white}{$\oline \nu$}};
        \node[right] at (R60) {\contour{white}{$\oline{f}_{-2}$}};
        \node[below] at (R300) {\contour{white}{$\oline{f}_1$}};
        \node[above left] at (B120) {\contour{white}{${\oline{f}_0'}$}};
        \node[left] at (B180) {\contour{white}{$\oline{f}_{-1}'$}};
        \node[above left] at (R180) {\contour{white}{$\oline \nu'$}};
        \node[below left] at (T240) {\contour{white}{$\oline{f}_{-2}'$}};
        \node[above] at (T120) {\contour{white}{$\oline{f}_1'$}};
        \node[above right] at (B60) {\contour{white}{$\oline{x}$}};
        \node[below left] at (B240) {\contour{white}{$\oline{x}'$}};
    \end{scope}
    
    \begin{scope}[shift={(5,0)}, scale=1.5]
        % Triangle 1 vertices (original angles 0, 120, 240 rotated by -30)
        \coordinate (T0) at ({cos(-30)}, {sin(-30)});
        \coordinate (T120) at ({cos(90)}, {sin(90)});
        \coordinate (T240) at ({cos(210)}, {sin(210)});
        % Triangle 2 vertices (original angles 60, 180, 300 rotated by -30)
        \coordinate (R60) at ({cos(30)}, {sin(30)});
        \coordinate (R180) at ({cos(150)}, {sin(150)});
        \coordinate (R300) at ({cos(270)}, {sin(270)});
        \filldraw[black] (0,0) circle (0.02);
        \path[name path=TA] (T0) -- (T120);
        \path[name path=TB] (T120) -- (T240);
        \path[name path=TC] (T240) -- (T0);
        \path[name path=RA] (R60) -- (R180);
        \path[name path=RB] (R180) -- (R300);
        \path[name path=RC] (R300) -- (R60);
        \path[name intersections={of=TA and RC, by=B0}];
        \path[name intersections={of=TA and RA, by=B60}];
        \path[name intersections={of=TB and RA, by=B120}];
        \path[name intersections={of=TB and RB, by=B180}];
        \path[name intersections={of=TC and RB, by=B240}];
        \path[name intersections={of=TC and RC, by=B300}];
        
        % Fixed y-coordinate for the weight rows
        \coordinate (wght1row) at (0,-1.6);
        \coordinate (wght2row) at (0,1.8);
    
        % Fixed y-coordinates for top and bottom of dotted lines
        \coordinate (toprow) at (0,-0.8);
        \coordinate (bottomrow) at (0,-1.45);
    
        % Row labels
        \coordinate (labelcol) at (-1.0, 0);
        \node[anchor=east] at (labelcol |- wght1row) {$\mathrm{wt}_1$};
        \node[anchor=east] at (labelcol |- wght2row) {$\mathrm{wt}_2$};
    
        % Drop-down lines for wght1
        \draw[white!60!black] (R180) -- (T240 |- bottomrow);
        \draw[white!60!black] (B180) -- (B180 |- bottomrow);
        \draw[white!60!black] (B120) -- (B240 |- bottomrow);
        \draw[white!60!black] (T120) -- (R300 |- bottomrow);
        \draw[white!60!black] (B60) -- (B60 |- bottomrow);
        \draw[white!60!black] (R60) -- (R60 |- bottomrow);
        \draw[white!60!black] (B0) -- (B0 |- bottomrow);
        
        % wght1 labels
        \node at (T240 |- wght1row) {4};
        \node at (B180 |- wght1row) {3};
        \node at (B240 |- wght1row) {2};
        \node at (R300 |- wght1row) {1};
        \node at (B60 |- wght1row) {0};
        \node at (B0 |- wght1row) {-1};
        \node at (R60 |- wght1row) {-2};
    
        % wght2 labels
        \foreach \lbl [count=\i, evaluate=\i as \x using {-0.5705 + (\i-1)*0.295}] in {8,6,4,2,0,-2,-4,-6} {
            \node at (\x, 1.6) {\footnotesize{\lbl}};
        }
        \foreach \lbl [count=\i, evaluate=\i as \x using {-0.05 + (\i-1)*0.295}] in {5,3,1,-1,-3} {
            \node at (\x, 1.9) {\footnotesize{\lbl}};
        }
        % \node at (-0.575,1.6) {\footnotesize{a}};
        
        \begin{scope} % odd wght2
            \clip (-1.5, -1) rectangle (1.5, 1.75);
            % Auxiliary points for w * \oline{x}' (weights != -1, 0, 1)
            \coordinate (auxB180) at ({-2*sqrt(3)/3}, {-2});     % w=5
            \coordinate (auxT120) at ({-sqrt(3)/3}, {-1});       % w=3
            \coordinate (auxR300) at ({sqrt(3)/3}, {1});         % w=-1
            \coordinate (auxB0) at ({2*sqrt(3)/3}, {2});         % w=-3
            
            % Auxiliary points for wght2 0
            \coordinate (auxW0a) at (0, 0);      % w=0
            \coordinate (auxW0b) at ({sqrt(3)/6}, 1);
        
            % Angled lines for wght2 through node and auxiliary point
            \draw[dotted, red] ($(B180)!-1!(auxB180)$) -- ($(B180)!0!(auxB180)$);  % w=5
            \draw[dotted, red] ($(T120)!0!(auxT120)$) -- ($(T120)!-1!(auxT120)$);  % w=3
            \draw[dotted, red] ($(R300)!0!(auxR300)$) -- ($(R300)!2!(auxR300)$);  % w=-1
            \draw[dotted, red] ($(B0)!0!(auxB0)$) -- ($(B0)!2!(auxB0)$);          % w=-3
            \draw[dotted, red] ($(auxW0a)!0!(auxW0b)$) -- ($(auxW0a)!3!(auxW0b)$);% w=1
        \end{scope}
    
        \begin{scope} % even wght2
            \clip (-1.5, -1) rectangle (1.5, 1.40);
            % Auxiliary points for w * \oline{x}' (weights != 0, 1, 2)
            \coordinate (auxT240) at ({-5*sqrt(3)/6}, {-5/2});   % w=6
            \coordinate (auxR180) at ({-7*sqrt(3)/6}, {-7/2});   % w=8
            \coordinate (auxB120) at ({-sqrt(3)/2}, {-3/2});     % w=4
            \coordinate (auxR60) at ({5*sqrt(3)/6}, {5/2});      % w=-4
            \coordinate (auxB300) at ({sqrt(3)/2}, {3/2});       % w=-2
            \coordinate (auxT0) at ({7*sqrt(3)/6}, {7/2});       % w=-6
            
            % Auxiliary points for weights -1, 0, 1
            \coordinate (auxB60) at (0, -1/2);   % w=0
            \coordinate (auxB240) at (0, 1/2);   % w=2
        
            % Angled lines for wght2 through node and auxiliary point
            \draw[white!50!red] ($(T240)!-1!(auxT240)$) -- ($(T240)!0!(auxT240)$); % w=6
            \draw[white!50!red] ($(R180)!0!(auxR180)$) -- ($(R180)!-1!(auxR180)$); % w=8
            \draw[white!50!red] ($(B120)!-1!(auxB120)$) -- ($(B120)!0!(auxB120)$); % w=4
            \draw[white!50!red] ($(R60)!0!(auxR60)$) -- ($(R60)!2!(auxR60)$);     % w=-4
            \draw[white!50!red] ($(B300)!0!(auxB300)$) -- ($(B300)!2!(auxB300)$); % w=-2
            \draw[white!50!red] ($(T0)!0!(auxT0)$) -- ($(T0)!2!(auxT0)$);         % w=-6
            \draw[white!50!red] ($(B60)!-1!(auxB60)$) -- ($(B60)!0!(auxB60)$);     % w=0
            \draw[white!50!red] ($(B240)!0!(auxB240)$) -- ($(B240)!2!(auxB240)$); % w=2
        \end{scope}
    
        \draw[blue, thick] (T0) -- (T120) -- (T240) -- cycle;
        \draw[blue, thick] (R60) -- (R180) -- (R300) -- cycle;
    
        \node[below right] at (B300) {\contour{white}{$\oline{e}_1$}};
        \node[right] at (B0) {\contour{white}{$\oline{e}_{0}$}};
        \node[right] at (T0) {\contour{white}{$\oline \eta$}};
        \node[right] at (R60) {\contour{white}{$\oline{e}_{-1}$}};
        \node[below] at (R300) {\contour{white}{$\oline{e}_2$}};
        \node[above left] at (B120) {\contour{white}{${\oline{e}_1'}$}};
        \node[left] at (B180) {\contour{white}{$\oline{e}_{0}'$}};
        \node[above left] at (R180) {\contour{white}{$\oline \eta'$}};
        \node[below left] at (T240) {\contour{white}{$\oline{e}_{-1}'$}};
        \node[above] at (T120) {\contour{white}{$\oline{e}_2'$}};
        \node[above right] at (B60) {\contour{white}{$\oline{y}$}};
        \node[below left] at (B240) {\contour{white}{$\oline{y}'$}};
    \end{scope}
        
        % MATH SYMBOL
        \node at (1.3,0) {\Huge $\oplus\ \langle r \rangle \hspace{0.7cm}\ltimes$};
        
        % RIGHT PARENTHESIS
        \draw[thick] (1.7,-1.5) .. controls (2.0,0) .. (1.7,1.5);
    \alttext{Two weight diagrams side by side, showing how a Lie algebra g decomposes into a
reductive factor and a nilpotent factor. Inside a large pair of parentheses,
reading left to right: a first star-shaped diagram, then a direct-sum sign and a
one-dimensional factor spanned by r; the parenthesis then closes, followed by a
semidirect-product sign and a second star-shaped diagram. So the whole
expression is (first star direct sum r) semidirect product (second star). Each
star is a Star-of-David hexagram of two overlapping triangles, the twelve-root
diagram of the exceptional root system G 2, with a dot at the center for the
zero weight. The left star is a copy of the Lie algebra g 2; the right star is
an adjoint module of g 2. Each vertex is a basis vector positioned by two
weights, relative to a two-dimensional Cartan subalgebra: the first weight, wt
1, is read off the vertical columns, with an integer scale along the bottom (3
down to minus 3 on the left, 4 down to minus 2 on the right); the second weight,
wt 2, is read off two families of slanted parallel lines, with scales along the
top. The vertices are labeled by the algebra's basis vectors, for example x, nu,
and f subscript i with primed counterparts on the left, and Y, eta, and e
subscript i with primes on the right, as listed in the accompanying text.}
    \end{tikzpicture}
    \end{center}
    % \end{figure}
%     \caption{A decomposition of $\mathfrak g$ into its reductive and nilpotent factors, which are represented with weight diagrams for $\langle \overline H_1, \overline H_2\rangle$}
% \end{figure}
It is easily seen from the above diagram that the negative parts of $\mathfrak g$ with respect to $\mathrm{wt}_1$ and $\mathrm{wt}_2$ are isomorphic to $\mathfrak{n}$ from Section \ref{subsection: nonmax model} and $\mathfrak s$ from Proposition \ref{Prop: (3,8) Nonmax Jacobi Alg}, respectively. That is, $\mathfrak g$ satisfies item (i) of Definition \ref{def: Tanaka prol}. Item (ii) of the definition is also easily checked for both gradings. However, to see that $\mathfrak g$ is maximal among all graded Lie algebras satisfying (i) and (ii) in each grading, we have relied upon code utilizing the {$\mathsf{Differential Geometry}$} package in \textsc{Maple}. Altogether, we have proven the following proposition.

\begin{prop}
    \label{prop: Jacobi-Tanaka prolongation}
    The graded Lie algebra $\mathfrak g$ defined above is isomorphic to the universal Tanaka prolongation of $\mathfrak{n}$ (respectively, $\mathfrak s$) when equipped with the grading $\mathrm{wt}_1$ (respectively, $\mathrm{wt}_2$).
\end{prop}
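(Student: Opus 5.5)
The plan is to prove, for each of the two gradings, that $\mathfrak g$ satisfies items (i) and (ii) of Definition \ref{def: Tanaka prol}, and then that it is maximal. For maximality I would use that the universal prolongation $\mathfrak g(\mathfrak a)$ contains every graded Lie algebra satisfying (i) and (ii) as a graded subalgebra. So it suffices to show $\dim \mathfrak g(\mathfrak a)_i \le \dim \mathfrak g_i$ for all $i\ge 0$, with $\mathfrak g(\mathfrak a)_i=0$ for $i$ beyond the top degree of $\mathfrak g$.

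\textbf{Items (i) and (ii).} From the weight diagrams, the negative part of $\mathfrak g$ in $\mathrm{wt}_1$ is spanned by $\oline x,\oline e_0,\oline f_0$ (weight $-1$), $\oline e_{-1},\oline f_{-1},\oline \eta$ (weight $-2$) and $\oline f_{-2},\oline\nu$ (weight $-3$). The brackets among these, read off from the $\mathfrak g_2$ table and the action $\rho$ through $\varphi$, reproduce \eqref{n rels}. For example, $[\oline e_0,\oline f_0]=\rho(\oline f_0)\varphi(\oline f_{-1})$ must equal $-\varphi([\oline f_0,\oline f_{-1}])=-\varphi(\oline\nu)=\oline\eta$, and I would check the signs in the same way. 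The same comparison in $\mathrm{wt}_2$ gives the relations of Proposition \ref{Prop: (3,8) Nonmax Jacobi Alg}.

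For (ii), an element $a\in\mathfrak g_{\ge 0}$ killing $\mathfrak g_{<0}$ must vanish. I would argue this separately for the $W$-component and for the $\mathfrak g_2\oplus\langle r\rangle$-component, using that $\mathfrak g_2$ is simple and $W$ is faithful. The $W$-part commutes with $W\cap\mathfrak g_{<0}$, so one needs the negative $\mathfrak g_2$-part acting on it. The $r$ and Cartan parts are detected by the weights on negative elements, which are nonzero and independent.

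\textbf{Maximality.} I would compute the prolongation degree by degree, starting with $\mathfrak g(\mathfrak a)_0=\mathrm{Der}_0(\mathfrak a)$, the grading-preserving derivations. For $\mathfrak n$, a degree-zero derivation is determined by its restriction to $\mathfrak n_{-1}$, a $3\times3$ matrix. The constraints come from requiring that the relations of \eqref{n rels} be preserved and that the brackets which vanish stay zero. For example, $[\oline e_0,\oline f_{-1}]=0$ and $[\oline e_{-1},\oline f_0]=0$, and together with $[\oline x,\oline\eta]=0$ these force most entries to vanish. I expect $\dim\mathfrak g(\mathfrak n)_0$ to match the weight-zero part of $\mathfrak g$, namely $\langle \oline h_1,\oline h_2,r,\ldots\rangle$ together with the weight-zero elements of $W$.

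Positive degrees then follow recursively as linear algebra, bounded by the count $\dim\mathfrak g_i$. Since $\mathfrak n$ is fundamental, I would use the standard criterion that if $\mathfrak g(\mathfrak n)_{i}=0$ for some $i>0$ then all higher degrees vanish. That reduces the work to checking degrees up to one past the top $\mathrm{wt}_1$-weight, which is $4$.

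For $\mathfrak s$ the same recursion applies, but $\mathfrak s$ is not fundamental. Here I would use Remark \ref{Remark: generators of m}: $\mathfrak s_{-1}\oplus\mathfrak s_{-2}$ generates $\mathfrak s$, so each $\varphi\in\mathfrak g(\mathfrak s)_i$ is determined by its values on $\mathfrak s_{-1}\oplus\mathfrak s_{-2}$. The vanishing criterion then needs two consecutive zero degrees. The top $\mathrm{wt}_2$-weight is $8$, so I would check up to degree $10$.

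\textbf{Main obstacle.} The hard step is the explicit solving of these linear systems in each degree, especially the middle degrees where $\mathfrak g_i$ is largest. A hand computation is error-prone, so I would do it by computer algebra, e.g.\ Maple's DifferentialGeometry Tanaka prolongation routines. A conceptual alternative would be to identify $\mathfrak g(\mathfrak n)$ via Kostant/Yamaguchi-type cohomology for the semidirect sum, but that seems harder, so I would rely on the direct computation.
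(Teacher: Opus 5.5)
Your proposal follows essentially the same route as the paper: items (i) and (ii) are checked directly against the weight diagrams, and maximality in both gradings is delegated to a computer-algebra computation with Maple's DifferentialGeometry package. Your degree-by-degree scheme correctly organizes that computation, stopping after one vanishing degree for the fundamental $\mathfrak n$ and after two consecutive vanishing degrees for $\mathfrak s$, which is generated by $\mathfrak s_{-1}\oplus\mathfrak s_{-2}$.

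Two small slips do not affect the plan. Your sample check should read $[\oline e_0,\oline f_0]=-\rho(\oline f_0)\varphi(\oline f_{-1})=-\varphi([\oline f_0,\oline f_{-1}])=\oline\eta$. Also, $\mathrm{Der}_0(\mathfrak n)$ is cut out by only two conditions, so it does not force ``most entries'' to vanish: it is the $7$-dimensional stabilizer in $\mathfrak{gl}(\mathfrak n_{-1})$ of the line $\langle \oline e_0\rangle$.
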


\subsection{Proof of Theorem \ref{Thm: main theorem}}
\label{subsection: maximal symmetry}

We are now in a position to prove Theorem \ref{Thm: main theorem}. To see that $\mathfrak{D}$ has maximal infinitesimal symmetries among those $(3,8)$ distributions of nonmaximal class with $6$-dimensional square, note that by Proposition \ref{Prop: (3,8) Nonmax Jacobi Alg}, all such distributions have Jacobi-Tanaka algebra isomorphic to $\mathfrak s$, and therefore infinitesimal symmetries of dimension at most $\mathrm{dim}\big(\mathfrak g(\mathfrak s)\big) = 29$ by Remark \ref{Remark: Tanaka symm} and Proposition \ref{prop: Jacobi-Tanaka prolongation}.

In order to complete the proof of Theorem \ref{Thm: main theorem}, it remains only to show that every $(3,8)$ distribution of maximal class with $6$-dimensional square has infinitesimal symmetries of dimension less than $29$.
In the papers \cite{doubrov2008rank3} and \cite{doubrov2016JacobiCurves}, the second and third authors of the current paper demonstrate that the maximally symmetric $(3,n)$ distributional germ of maximal class has Jacobi symbol $(1,n-6)$; i.e., its skew Young diagram has maximal shift. Because we are momentarily restricting our attention to distributions of maximal class, Proposition \ref{prop: m rels} implies that the Jacobi-Tanaka algebra is determined by the Jacobi symbol. In \cite{doubrov2025large}, the same authors demonstrate that the Jacobi-Tanaka algebra corresponding to the Jacobi symbol $(1,n-6)$ has universal Tanaka prolongation of dimension $\mathrm{Fib}_{n-1}+n+2$, where $\mathrm{Fib}_{n}$ is the $n$th Fibonacci number with $\mathrm{Fib}_1 =\mathrm{Fib}_2 = 1$. By Remark \ref{Remark: Tanaka symm}, the maximal dimension of infinitesimal symmetry algebra for a $(3,8)$ distribution of maximal class is $\mathrm{Fib}_{7}+10 = 23$, and the proof of Theorem \ref{Thm: main theorem} is complete.

\section{Interpretation via the split octonions}
\label{octonions_sec}
The following exposition of the split octonions and its relation to the maximally symmetric $(2,3,5)$ distributional germ follows \cite{AgrachevRollingBalls} closely; we add an interpretation of the $(3,6,8)$ distribution $\mathfrak{D}$ using the tangent bundle $T\oct$ and its natural algebraic structure.

Let $\oct$ be the split octonion algebra, which can be viewed as the vector space sum $\mathbb{H}\oplus\mathbb{H}$ of quaternion algebras equipped with multiplication law
\[
    (a+\ell b)(c + \ell d) = (ac + d\oline{b}) + \ell(\oline a d + cb)
\]
for $a,b,c,d\in \mathbb{H}$. It is well-known that the automorphism group of $\oct$ is the split real form $G_2$ of the Lie group $G_2^\mathbb C$; let $\phi:G_2\to \mathrm{Aut}(\oct)$ be this action. For any $x = a + \ell b\in\oct,$ define conjugate $\oline{x} = \oline{a} - \ell b$. This conjugation gives rise to the isotropic quadratic form $Q(x) = \oline{x}x = |a|^2-|b|^2$, which satisfies $Q(xy)=Q(x)Q(y)$ for all $x,y\in\oct$, making $\oct$ a (non-associative) composition algebra. The polarization of $Q$ is then a signature $(4,4)$ nondegenerate bilinear form $Q(x,y) = \frac{1}{2}(\oline xy+\oline yx)$. Write
\begin{equation}
    \mathbb R^7 = \{x\in \oct: Q(1,x)=0\}
\end{equation} for the subspace of pure imaginary octonions. With this identification, $\mathbb{R}^7$ inherits the cross-product $x\times y = \mathrm{Im}(xy) = \frac{1}{2}(xy-yx)$, which satisfies
\begin{gather}
    \label{cross prod antisymm}
    x \times y = -y\times x
    \\
    \label{cross prod orthogonality}
    Q(x, x\times y) = 0
    \\
    \label{cross prod magnitude}
    Q(x\times y) = Q(x)Q(y)-Q(x,y)^2
\end{gather}
for all $x,y\in \mathbb{R}^7$. The cone $Q^{-1}(0)$ is precisely the set of zero divisors in $\oct$. Define
\begin{equation}
    K:=\{x\in\mathbb{R}^7 : Q(x)=0\}\setminus\{0\},
\end{equation}
the cone of pure imaginary zero divisors.

Since $K\subseteq \mathbb R^7$, for each $x\in K$ we can identify
\begin{equation}
\label{TxK}
    T_xK = \{y\in \mathbb{R}^7: Q(x,y) = 0\}
\end{equation}

The algebra $\oct$ is an alternative algebra, which means the following identities hold:
\begin{equation}
    (xy)x = x(yx),\quad x(xy) = (xx)y, \quad (yx)x = y(xx)
\end{equation}
Writing out the commutator definition of the cross-product and applying these identities yields
\begin{align}
\label{x times x times y}
    x\times (x\times y) &= 0 \quad\text{for any }x\in K, y\in T_xK.
\end{align}

Using the identification \eqref{TxK}, define the distribution $\Delta$ on $K$ by
\begin{equation}
\label{def Delta}
    \Delta(x) = \{y \in T_xK : x\times y=0\}.
\end{equation}
One can observe that $\Delta$ has rank $3$. Indeed, a simple computation shows that for any nonzero zero divisor $x\in \mathbb{R}^7$, the equation $x\times y = 0$ reduces to a single quaternionic equation, and for each solution $y\in \mathbb{R}^7$ of $x\times y=0$, equation \eqref{cross prod magnitude} implies that $Q(x,y)=0$, so that $y\in T_xK$. Thus $\Delta_x$ has codimension $4$ in $T_x\mathbb{R}^7$, and $\dim(\Delta_x)=3$.

It will later be useful to consider the Pfaffian system $I_\Delta$ of $1$-forms which annihilate $\Delta$. The $\mathbb{R}^7$-valued $1$-form $\omega_0 = x\times dx$ on $\mathbb{R}^7$ defines $\Delta$ when restricted to $K$. Because the $G_2$ action on $\mathbb{R}^7$ preserves the cross-product, $\omega_0$ satisfies the equivariance property 
\begin{equation}
    \label{omega_0 equivar}
    \phi(g)^*\omega_0 = \phi(g)\circ \omega_0
\end{equation}
By \eqref{cross prod orthogonality} and \eqref{x times x times y}, one can see that $\omega_0|_{TK}$ in fact takes values in the distribution $\Delta$. We can therefore write
\begin{equation}
    \label{def: I Delta}
    I_\Delta =  \{Q(u,\omega_0): u\in \mathbb{R}^7\} 
\end{equation}

Let $\mathcal{E}$ be the Euler vector field on $K$; that is, $\mathcal{E}(x) := x$. It is easy to show that $\mathcal{E}(x)$ is a Cauchy characteristic of $\Delta.$ Let $s:K\to \mathbb{P}K$ be the projectivization of the cone $K$. Because the Euler vector field $\mathcal{E}$ is a Cauchy characteristic vector field of $\Delta$, the rank $2$ distribution $\boldsymbol{\Delta}:=s_*\Delta$ is well-defined.

Because the automorphisms of $\oct$ preserve the norm $Q$, the unit $1\in \oct$, and the Euler vector field $\mathcal{E}$, the action of $G_2$ on $\oct$ preserves $K$ and passes to a smooth action on $\mathbb{P} K$ which preserves the distribution $\boldsymbol{\Delta}$. Further, choosing a basis for $\mathbb R^7$ from $K$, one can deduce that any automorphism in $G_2$ which acts trivially on $K$ acts trivially on all of $\oct$, so the action of $G_2$ on $K$ is effective.

Viewing $\Delta\subseteq \oct^2$ as a cone in a real vector space, consider the projectivization $\mathbb P_{\oct^2}(\Delta)\subseteq \mathbb P(\oct^2)$, which has a natural projection to $\mathbb PK$. We use $\mathbb P_{\oct^2}(\Delta)$ to distinguish this projectivization from the fiberwise projectivization $\mathbb P \Delta$. We now construct a distribution $\boldsymbol{\Delta}'$ on $\mathbb P_{\oct^2}(\Delta)$ which projects to $\boldsymbol{\Delta}$ and is locally equivalent to the distribution $\mathfrak D$ defined in Section \ref{subsection: nonmax model}.

Viewing tangent vectors as jets of parametrized curves, $T\oct$ is naturally an algebra with multiplication
\begin{equation}
\label{tang mult}
    j^1_0\big(x(t)\big)\cdot j^1_0\big(y(t)\big) = j^1_0\big(x(t)\cdot y(t)\big)
\end{equation}
for smooth curves $x(t)$ and $y(t)$ in $\oct$. Under the canonical identification $T\oct = \oct ^2$, this multiplication can be written
\begin{equation}
    (x_0,x_1)\cdot (y_0,y_1) = (x_0y_0,x_0y_1+x_1y_0)\quad\text{for }x_i,y_i\in \oct.
\end{equation}
We can also use \eqref{tang mult} to equip $TG_2$ with Lie group structure. We then have a natural Lie group isomorphism $TG_2 = G_2\ltimes W$, where $W$ is the adjoint module of $G_2$, which is naturally identified with ${\mathfrak{g}}_2$ as a module. We denote by $\Phi$ the natural action of $TG_2$ on $T\oct$ via automorphisms 
\begin{equation}
\label{def tan action}
    \Phi\Big(j^1_0\big(g(t)\big)\Big)\Big(j^1_0\big(x(t)\big)\Big) = j^1_0\Big(\phi\big(g(t)\big)\big(x(t)\big)\Big)
\end{equation}
for smooth curves $g(t)\in G_2$ and $x(t)\in \oct$.

Identifying the adjoint module $W$ with $T_eG_2$ gives a natural action of $W$ on $\oct$ via $\phi'=T_e\phi$. Under the identifications $TG_2\cong G_2\ltimes W$ and $T\oct = \oct^2$, the action \eqref{def tan action} can then be written
\begin{equation}
\label{def TG action}
    \Phi(g, X)(x_0,x_1) = \Big(\phi(g)(x_0), \phi(g)\circ \phi'(X)(x_0) + \phi(g)(x_1)\Big)
\end{equation}
where $x = (x_0,x_1)\in TK\subseteq \oct^2$ and $(g,X)\in G_2\ltimes W$. 

Since the action of $TG_2$ on $T\oct$ is by automorphisms, it preserves the pure imaginary subbundle $T\mathbb{R}^7\subseteq T\oct$. Let $(x_0,x_1) = (x_0^1,x_0^2,\ldots, x_0^7,x_1^1,x_1^2,\ldots x_1^7)$ be coordinates on the vector space $T\mathbb{R}^7 \cong \mathbb{R}^7\oplus \mathbb{R}^7$ corresponding to the canonical ordered basis $(i,j,k,\ell,\ell i,\ell j, \ell k)$ of $\mathbb{R}^7$.

Now define an $\mathbb{R}^7$-valued $1$-form $\omega_1 = x_0\times dx_1 + x_1\times dx_0$ on the manifold $T\mathbb{R}^7$. A short computation shows that $\omega_1$ satisfies transformation law
\begin{equation}
    \label{omega_1 transformation law}
    \Phi(g,X)^*\omega_1 = \phi(g)\big(\omega_1 + \phi'(X)\circ \pi^*\omega_0\big).
\end{equation}
Restricting $\omega_1$ to $TTK$, consider the distribution $\Delta'$ on $TK$ which is annihilated by the Pfaffian system
\begin{equation}
\label{def I Delta'}
    I_{\Delta'}= \pi^*(I_\Delta)\oplus\{Q(v,\omega_1): v\in \Gamma(TK)\} 
\end{equation}
where $\pi:TK\to K$ is the canonical projection. Note that we mildly abuse notation above by writing $v$ for $v\circ \pi$, considered as an $\mathbb{R}^7$-valued function on $TK$.
\begin{lemma}
    \label{lemma: TG2 invar}
    The distribution $\Delta'$ is invariant under the action of $TG_2$ defined in \eqref{def TG action}.
\end{lemma}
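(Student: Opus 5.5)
We will show that for every $(g,X)\in TG_2 = G_2\ltimes W$ the pullback $\Phi(g,X)^*$ maps the Pfaffian system $I_{\Delta'}$ into itself. Since $\Phi(g,X)$ is a diffeomorphism of $TK$ with inverse of the same form, this gives $\Phi(g,X)^*I_{\Delta'} = I_{\Delta'}$, i.e.\ $\Phi(g,X)_*\Delta'=\Delta'$. We handle the two summands of \eqref{def I Delta'} separately, using the transformation laws \eqref{omega_0 equivar} and \eqref{omega_1 transformation law}.

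We first note that $\Phi(g,X)$ is a lift of $\phi(g)$ over $\pi:TK\to K$: the first component of \eqref{def TG action} is $\phi(g)(x_0)$. Hence
\[
\Phi(g,X)^*\pi^*Q(u,\omega_0)=\pi^*\phi(g)^*Q(u,\omega_0)=\pi^*Q\big(u,\phi(g)\circ\omega_0\big)=\pi^*Q\big(\phi(g)^{-1}u,\omega_0\big),
\]
by \eqref{omega_0 equivar} and the $G_2$-invariance of $Q$. This lies in $\pi^*(I_\Delta)$.

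For the second summand, take $v\in\Gamma(TK)$ and write $\tilde v = v\circ\pi$. Here $\tilde v(x_0,x_1)=v(x_0)$ is an $\mathbb R^7$-valued function with $Q(x_0,\tilde v)=0$. By \eqref{omega_1 transformation law},
\[
\Phi(g,X)^*Q(\tilde v,\omega_1) = Q\Big(\tilde v\circ\Phi(g,X),\ \phi(g)\big(\omega_1+\phi'(X)\circ\pi^*\omega_0\big)\Big).
\]
Set $u = \phi(g)^{-1}\big(\tilde v\circ \Phi(g,X)\big)$, which is the function $(x_0,x_1)\mapsto \phi(g)^{-1}v(\phi(g)x_0)$. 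It depends only on $x_0$, and it satisfies $Q(x_0,u)=Q(\phi(g)x_0,v(\phi(g)x_0))=0$. So $u$ is again of the form $w\circ\pi$ with $w\in\Gamma(TK)$, namely $w=\phi(g)^{-1}\circ v\circ\phi(g)$. Using invariance of $Q$, the pullback becomes
\[
Q(u,\omega_1)+Q\big(u,\phi'(X)\pi^*\omega_0\big).
\]
The first term lies in the second summand of $I_{\Delta'}$. For the second term, $\phi'(X)$ is a derivation of $\oct$ in $\mathfrak g_2\subset\mathfrak{so}(Q)$, so it is $Q$-skew and preserves $\mathbb R^7$. Hence
\[
Q\big(u,\phi'(X)\pi^*\omega_0\big) = -\pi^*Q\big(\phi'(X)w,\omega_0\big),
\]
where $\phi'(X)w$ is an $\mathbb R^7$-valued function on $K$.

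The main obstacle is the following point: $I_\Delta$ in \eqref{def: I Delta} is written with constant $u\in\mathbb R^7$, but we now need $Q(\text{function},\omega_0)$ to lie in $I_\Delta$. We resolve this by reading $I_\Delta$ as a Pfaffian system, i.e.\ a $C^\infty$-module. Since $\omega_0=\sum_a \omega_0^a\,\epsilon_a$ in a basis $\epsilon_a$ of $\mathbb R^7$, we get $Q(h,\omega_0)=\sum_a h^a\,Q(\epsilon_a,\omega_0)$, which is a function combination of elements of $I_\Delta$. The same remark applies to $\omega_1$, and it is consistent with the definition in \eqref{def I Delta'}, where $v$ is already allowed to be a section. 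It follows that $\pi^*Q(\phi'(X)w,\omega_0)\in\pi^*(I_\Delta)$.

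Combining the two parts shows $\Phi(g,X)^*I_{\Delta'}\subseteq I_{\Delta'}$, and the inverse argument gives equality. Finally, we check that $\Phi$ preserves $TK$: $\phi(g)$ preserves $K$, and its differential together with the infinitesimal action $\phi'(X)x_0\in T_{x_0}K$ (by skewness, $Q(x_0,\phi'(X)x_0)=0$) maps $TK$ into itself. Hence the restriction to $TTK$ is legitimate, and the lemma follows.
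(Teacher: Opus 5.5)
Your proposal is correct and follows essentially the same argument as the paper. Both proofs pull back the two summands of $I_{\Delta'}$, handle the first with $\pi\circ\Phi(g,X)=\phi(g)\circ\pi$ and the equivariance of $\omega_0$, and handle the second with the transformation law for $\omega_1$ and the $Q$-invariance of $\phi(g)$. This gives a $Q(\cdot,\omega_1)$ term with a section of $TK$ plus a term built from $\pi^*\omega_0$.

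The only difference is cosmetic. The paper works pointwise at $x$, so it needs neither the $Q$-skewness of $\phi'(X)$ nor your $C^\infty$-module reading of $I_\Delta$.
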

\begin{proof}
    For any $(g,X)\in G_2\ltimes W$, applying the definition \eqref{def TG action} yields
    \begin{gather}
        \Phi(g,X)^*(\pi^*\omega_0) = \big(\pi\circ \Phi(g,X\big)\big)^*\omega_0 = \big(\phi(g)\circ\pi\big)^*\omega_0 = \pi^*\big(\phi(g)\circ \omega_0\big)
    \end{gather}
    where the last equality follows by \eqref{omega_0 equivar}. Since $G_2$ acts on $\oct$ by automorphisms, this implies that $\Phi(g,X)^*(\pi^*I_\Delta) = \pi^*I_\Delta$.

    Now let $x = (x_0,x_1)\in TK$ be arbitrary, and let $\tilde x = (\tilde x_0,\tilde x_1) = \Phi(g,X)(x)$. The transformation law \eqref{omega_1 transformation law} implies that for any $v\in \Gamma(TK)$,
    \begin{gather}
        \Phi(g,X)^*\big(Q(v,\omega_1)\big)_x = Q\Big(v(\tilde x_0),\big(\Phi(g,X)^*\omega_1\big)_x\Big) 
        \\
        = Q\Big(v(\tilde x_0),\phi(g)\big(\omega_1+\phi'(X)\circ \pi^*\omega_0\big)_{x}\Big)
    \end{gather}
    Since $\phi(g)$ preserves $Q$, this can be rewritten
    \begin{gather}
        = Q\big(\phi(g^{-1})v(\tilde x_0),(\omega_1)_x+\phi'(X)\circ (\pi^*\omega_0)_{x}\big)
        \\
        \label{invar_proof_expr}
        =Q\big(\phi(g^{-1})v(\tilde x_0),(\omega_1)_x\big)
        +
        Q\big(\phi(g^{-1})v(\tilde x_0),\phi'(X)\circ (\pi^*\omega_0)_{x}\big)
    \end{gather}
    Since $\tilde x_0 = \phi(g)(x_0)$, we have that $\phi(g^{-1})v(\tilde x_0)\in T_{x_0}K$, and therefore the first summand of \eqref{invar_proof_expr} is in $(I_{\Delta'})_x$; because $(\pi^*\omega_0)_{x}\in (I_{\Delta'})_x$, so too is the second summand of \eqref{invar_proof_expr}.
    Thus $\Phi(g,X)^*Q(v,\omega_1)_x\in (I_{\Delta'})_x$, as desired.
\end{proof}

When computing rank of $\Delta'$, we may restrict our focus to a single point, since $TG_2$ acts transitively on $TK$. To this end, fix $p=(p_0,p_1)= (i + \ell,0)\in TK$.

\begin{lemma}
    \label{lemma: Delta' rank}
    The distribution $\Delta'$ on $TK$ defined in \eqref{def I Delta'} has rank $7$.
\end{lemma}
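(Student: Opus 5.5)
Since the action of $TG_2$ on $TK$ preserves $\Delta'$ (Lemma \ref{lemma: TG2 invar}) and is transitive, the rank of $\Delta'$ is the same at every point. It therefore suffices to compute $\dim \Delta'(p)$ at $p=(p_0,p_1)=(i+\ell,0)$. The manifold $TK$ has dimension $12$, since $K$ is a $6$-dimensional cone in $\mathbb R^7$. I will compute $\Delta'(p)$ directly as the subspace of $T_pTK=\{(u_0,u_1)\in\mathbb R^7\oplus\mathbb R^7:\ Q(p_0,u_0)=0,\ Q(p_0,u_1)+Q(p_1,u_0)=0\}$ cut out by the two families of forms in \eqref{def I Delta'}.

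First, the forms in $\pi^*(I_\Delta)$ only see the $u_0$-component. They impose $p_0\times u_0=0$, which means $u_0\in\Delta(p_0)$, a $3$-dimensional space. I will write down $\Delta(p_0)$ explicitly by solving the single quaternionic equation mentioned after \eqref{def Delta}.

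Second, since $p_1=0$, the form $\omega_1$ at $p$ reduces to $p_0\times du_1$. The conditions $Q(v,\omega_1)=0$ for all $v\in T_{p_0}K$ then say that $p_0\times u_1$ is $Q$-orthogonal to $T_{p_0}K=p_0^{\perp}\cap\mathbb R^7$. Because $p_0$ is null, $(p_0^\perp\cap\mathbb R^7)^{\perp}\cap\mathbb R^7=\langle p_0\rangle$, so the condition is $p_0\times u_1\in\langle p_0\rangle$. By \eqref{cross prod orthogonality} and \eqref{x times x times y}, the image of $y\mapsto p_0\times y$ on $T_{p_0}K$ is contained in $\Delta(p_0)$ and has dimension $6-3=3$. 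Its kernel on $T_{p_0}K$ is $\Delta(p_0)$.

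So the condition on $u_1\in T_{p_0}K$ is that $p_0\times u_1$ lies in $\langle p_0\rangle\cap \operatorname{im}(p_0\times\cdot)$. I expect this intersection to be exactly $\langle p_0\rangle$: indeed $p_0\in\Delta(p_0)$ because $\mathcal E$ lies in $\Delta$, and I will check explicitly that $p_0$ is in the image. Granting this, the admissible $u_1$ form a $4$-dimensional space.

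Adding up, $\Delta'(p)$ is a direct product of a $3$-dimensional space of $u_0$ and a $4$-dimensional space of $u_1$, so its dimension is $3+4=7$.

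The main obstacle is the second step. It requires the explicit computation at $p_0=i+\ell$, using the multiplication law of $\oct$, of the kernel and image of the map $y\mapsto p_0\times y$. In particular one must confirm that its image is all of $\Delta(p_0)\ni p_0$ rather than a proper subspace missing $p_0$. I would do this by writing $y=a+\ell b$ with $a,b$ imaginary quaternions and expanding $p_0y-yp_0$ componentwise. This is a finite $7\times7$ linear algebra computation, which could also be checked symbolically.
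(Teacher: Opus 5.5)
Your proof is correct and shares the paper's skeleton. Both reduce, via the transitive $TG_2$-action, to the point $p=(i+\ell,0)$, and both use the fact that when $p_1=0$ the conditions split into a $u_0$-part (from $\pi^*(I_\Delta)$) and a $u_1$-part (from the forms $Q(v,\omega_1)$).

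The difference is how each part is evaluated. The paper expands $(\pi^*\omega_0)_p$ and $(\omega_1)_p$ in coordinates, in the basis $p_0$, $z_1=k+\ell j$, $z_2=-j+\ell k$ of $\Delta_{p_0}$. It then reads off five independent forms, $dx_0^5$, $dx_0^2+dx_0^7$, $dx_0^3-dx_0^6$, $dx_1^2+dx_1^7$, $dx_1^3-dx_1^6$, which gives $12-5=7$. You compute the common kernel intrinsically instead: $u_0\in\Delta(p_0)$, and $u_1\in T_{p_0}K$ with $p_0\times u_1\in\langle p_0\rangle$. A small correction: the identity $(p_0^\perp\cap\mathbb R^7)^\perp\cap\mathbb R^7=\langle p_0\rangle$ comes from nondegeneracy of $Q$ on $\mathbb R^7$, not from $p_0$ being null.

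The step you single out as the main obstacle is already settled by your own count. The map $y\mapsto p_0\times y$ on $T_{p_0}K$ has kernel $\Delta(p_0)$, so its image is $3$-dimensional. By \eqref{cross prod orthogonality} and \eqref{x times x times y} that image lies in the $3$-dimensional space $\Delta(p_0)$, so it equals $\Delta(p_0)$. This is exactly \eqref{Delta is cross im}, which the paper records later. Moreover $p_0\in\Delta(p_0)$, since $p_0\times p_0=0$ and $Q(p_0,p_0)=0$.

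So no explicit octonion multiplication is needed beyond knowing that $\Delta$ has rank $3$, and your argument works unchanged at any point $(x_0,0)$. What the paper's coordinate version buys is the explicit vectors $z_1,z_2$ and the explicit form of $(\omega_1)_p$. It reuses these immediately in the proof of Lemma \ref{lemma: Delta' growth}.
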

\begin{proof}
    Recall that we have coordinates $(x_0^1,\ldots, x_0^7,x_1^1,\ldots, x_1^7)$ on $T\mathbb{R}^7$ corresponding to the standard basis of $\mathbb{R}^7\subseteq \oct$. Applying \eqref{TxK}, $T_pTK\subseteq T_p(T\mathbb{R}^7)$ is defined by the 1-forms
    \begin{gather}
        Q(p_0,dx_0) = dx_0^4-dx_0^1 = 0
        \\
        Q(p_0, dx_1) + Q(p_1, dx_0) = dx_1^4-dx_1^1 = 0.
    \end{gather}
    By direct computation, we have equivalence of forms on $T_pTK$
    \begin{gather}
        (\pi^*\omega_0)_{p} = p_0\times dx_0 = dx_0^5\cdot p_0 + (dx_0^2+dx_0^7)\cdot z_1 + (dx_0^3-dx_0^6)\cdot z_2
    \end{gather}
    where $z_1 = k +\ell j$ and $z_2 = -j+\ell k$. Similarly, on $T_pTK$ we have
    \begin{gather}
        \label{omega_1_p}
        (\omega_1)_p = dx_1^5\cdot p_0 + (dx_1^2+dx_1^7)\cdot z_1 + (dx_1^3-dx_1^6)\cdot z_2
    \end{gather}
    Since $T_{p_0}K=\{v\in \mathbb R^7: Q(p_0,v)=0\}$, we have for any $v\in T_{p_0}K$ that 
    \begin{align}
        Q\big(v,(\omega_1)_p\big) 
        & = Q(v, z_1)(dx_1^2+dx_1^7) + Q(v,z_2) (dx_1^3-dx_1^6)
        \\ & = (v_3-v_6)(dx_1^2+dx_1^7) + (-v_2-v_7)(dx_1^3-dx_1^6).
    \end{align}
    Therefore, by \eqref{def I Delta'}, defining forms of $(\Delta')_p\subseteq T_pTK$ can then be written
    \begin{gather}
        (I_{\Delta'})_p = \mathrm{span}\{dx_0^5,dx_0^2+dx_0^7,dx_0^3-dx_0^6,dx_1^2+dx_1^7,dx_1^3-dx_1^6\}
    \end{gather}
    Therefore, $\Delta'_p$ has rank $\dim(TK)-\dim\big((I_{\Delta'})_p\big)=12-5=7$.
\end{proof}

Let us now compute the rank of $(\Delta')^2$. Recall the defining forms of $(\Delta')^2\subseteq TTK$ are given by
\begin{equation}
    I_{\Delta'}^{(1)} = \{\varphi\in I_{\Delta'}: d\varphi|_{\Delta'}=0\}
\end{equation}

\begin{lemma}
    \label{lemma: Delta' growth}
    The square $(\Delta')^2$ of $\Delta'$ has rank $10$, and $(\Delta')^3 = TTK$.
\end{lemma}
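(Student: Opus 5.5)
By Lemma \ref{lemma: TG2 invar}, $TG_2$ preserves $\Delta'$ and acts transitively on $TK$. Hence the ranks of $(\Delta')^2$ and $(\Delta')^3$ are constant, and it suffices to compute them at the base point $p=(i+\ell,0)$. I will work with the Pfaffian system. The first derived system $I^{(1)}_{\Delta'}=\{\varphi\in I_{\Delta'}: d\varphi|_{\Delta'}=0\}$ annihilates $(\Delta')^2$. The claim that $(\Delta')^2$ has rank $10$ is therefore equivalent to $\dim (I^{(1)}_{\Delta'})_p = 2$, and $(\Delta')^3=TTK$ is equivalent to $I^{(2)}_{\Delta'}=0$.

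\textbf{Step 1: the forms from $\omega_0$.} Take the five generators found in the proof of Lemma \ref{lemma: Delta' rank}. Near $p$, extend them to a local coframe of $I_{\Delta'}$ using $\pi^*Q(u,\omega_0)$ for constant $u$, together with $Q(v,\omega_1)$ for vector fields $v$ tangent to $K$. Since $d\omega_0 = dx_0\times dx_0$ depends only on $x_0$, the two-forms $dQ(u,\omega_0)$ restricted to $\Delta'$ are pullbacks of the corresponding forms for $\Delta$ on $K$. Now $\Delta$ has rank $3$ with Cauchy characteristic $\mathcal E$, and it descends to the $(2,3,5)$ distribution $\boldsymbol\Delta$ on the $5$-dimensional $\mathbb PK$. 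So $\Delta^2$ has rank $4$ on $K$, and exactly one combination of the three forms $\pi^*I_\Delta$ has $d\varphi|_{\pi^*\Delta}=0$. This is the form $dx_0^5$ at $p$, i.e.\ $Q(\text{suitable }u,\omega_0)$.

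\textbf{Step 2: mixing with the $\omega_1$-forms.} Restricted to $\Delta'$, which is smaller than $\pi^*\Delta$, more combinations could become closed. I expect the surviving combinations to be exactly $\pi^*$ of the $\Delta^2$-annihilator plus one combination built from $\omega_1$. The natural guess is the linearization of $dx_0^5$, namely $Q(u,\omega_1)$ with $u$ as in Step 1, evaluated along $v$ pointing in that direction.

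To verify this, I will compute $d\omega_1 = 2\,dx_0\times dx_1$ and restrict it to $\Delta'_p$. I will parametrize $\Delta'_p$ by the seven free coordinates left by the five forms of Lemma \ref{lemma: Delta' rank} together with the two tangency equations. I must also track the terms $dv\wedge\omega_1$ coming from the $x$-dependence of $v$, because $v$ has to stay in $T_{x_0}K$. I will then write the $5\times$(bivectors on $\Delta'_p$) matrix of the restricted two-forms and check that its kernel is $2$-dimensional. This is a finite explicit computation with the octonion multiplication table, and I would confirm it symbolically, as the paper does elsewhere with Maple.

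\textbf{Step 3: the second derived system.} I will take the two surviving forms and check that no nonzero combination $\psi$ has $d\psi|_{(\Delta')^2}=0$. Equivalently, I will exhibit brackets of sections of $(\Delta')^2$ and $\Delta'$ whose values at $p$ fill the two remaining directions. Step 1 shows the $\omega_0$-part already fails, since $\boldsymbol\Delta^3$ is everything. For the $\omega_1$-part, I will use the linearity of $\omega_1$ in $x_1$ and the symmetry of $TG_2$ acting on $W$ to reduce the check to the same $(2,3,5)$ bracket, now applied in the fiber direction.

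The main obstacle is Step 2. The rank count depends on delicate cancellations in $d\omega_1|_{\Delta'}$, including the derivative terms from the point-dependent test vectors $v$. I would first try to choose $v$ as a linear vector field $v(x_0)=A x_0$ with $A\in\mathfrak{so}(Q)$, so that $v$ is automatically tangent to $K$ and the derivative terms are explicit.
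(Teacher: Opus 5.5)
Your reduction to the base point $p$ and the reformulation via $I^{(1)}_{\Delta'}$ and $I^{(2)}_{\Delta'}$ are fine. However, the structural predictions that drive Steps 1--3 are false, and the computation that actually decides the question is missing.

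In Step 1, $K$ is $6$-dimensional, so the rank-$4$ distribution $\Delta^2$ has a $2$-dimensional annihilator. Hence two combinations of $\pi^*I_\Delta$ survive, not one. Moreover, $dx_0^5$ is precisely the form that fails. At $p$ one has $Q(u,\pi^*\omega_0)=Q(u,p_0)\,dx_0^5+Q(u,z_1)(dx_0^2+dx_0^7)+Q(u,z_2)(dx_0^3-dx_0^6)$. Since $z_1\times z_2=2p_0$, one gets $d\varphi_p\big((z_1,0),(z_2,0)\big)=4Q(u,p_0)$, while all other pairs from $\Delta'_p$ give $0$. So the surviving $\omega_0$-forms are $dx_0^2+dx_0^7$ and $dx_0^3-dx_0^6$.

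Your Step 2 guess is also wrong: no $\omega_1$-form survives. Your candidate $Q(u,\omega_1)$ with $Q(u,p_0)\neq 0$ is not even in $I_{\Delta'}$. The definition only allows $v$ tangent to $K$, and $(0,\ell i)\in\Delta'_p$ has $dx_1^5=1$.

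The missing idea is as follows. Take an arbitrary $\varphi=Q(u,\pi^*\omega_0)+Q(v,\omega_1)\in I^{(1)}_{\Delta'}$ and evaluate $d\varphi$ on $(z_a,0)\wedge(0,z_3)$, where $z_3=\ell i$. This vector lies in $\Delta'_p$ but not in $C$. Use $p_0\times z_3=p_0$, $z_a\times z_3=-z_a$, and $Q(dv,x_0)=-Q(v,dx_0)$ (because $v$ is tangent to $K$). This gives $-3Q(v,z_a)=0$ for $a=1,2$. Hence $v_{p_0}\in\Delta^\perp_{p_0}$, and the $\omega_1$-part of $\varphi$ vanishes at $p$.

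Therefore $(\Delta')^2=\pi^*(\Delta^2)$, of rank $6+4=10$. The count is $2+0$, not your predicted $1+1$. A correctly executed brute-force kernel computation would still return $2$. But your Step 3, which reduces the $\omega_1$-part to a $(2,3,5)$ bracket in the fiber direction, targets a surviving $\omega_1$-form that does not exist.

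Once you know $(\Delta')^2=\pi^*(\Delta^2)$, the second claim is immediate. This bundle contains the whole vertical bundle of $TK\to K$. Brackets of $\pi$-related lifts of sections give $\pi_*\big((\Delta')^3\big)\supseteq\Delta^2+[\Delta,\Delta^2]=\Delta^3=TK$. Hence $(\Delta')^3=TTK$.
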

\begin{proof}
    As above, fix $p_0 = i + \ell$, $z_1 = k+\ell j$, and $z_2 = -j +\ell k$, so that $\Delta_{p_0} = \mathrm{span}\{p_0,z_1,z_2\}$. Because $\Delta'$ is homogeneous, it suffices to show that $\dim\big((I_{\Delta'}^{(1)})_{p}\big) = 2$, where $p= (p_0,0)\in TK$. 
    
    Fix a vector $u\in \mathbb{R}^7$ and a vector field $v\in \Gamma(TK)$ to obtain an arbitrary differential 1-form
    \begin{equation}
        \label{def varphi}
        \varphi =Q(u,\pi^*\omega_0) +  Q(v, \omega_1) \in I_{\Delta'}
    \end{equation}
    and suppose that $d\varphi|_{\Delta'}=0$ so that $\varphi\in I_{\Delta'}^{(1)}$. 

    Firstly, we show that $Q(v,\omega_1)_{p} = 0$. Fix $z_3 = \ell i$, and observe that 
    \begin{gather}
        \label{z3 cross conditions}
        p_0\times z_3 = p_0, \quad z_1\times z_3 = -z_1,\quad \text{and}\quad z_2\times z_3 = -z_2.
    \end{gather}
    We can compute
    \begin{equation}
        \label{dN}
        d\big(Q(v,\omega_1)\big) = Q(dv,\omega_1) + Q(v,d\omega_1)
    \end{equation}
    where $dv$ is the differential of the function $v:TK\to \mathbb{R}^7$. Consider the vectors $(z_1,0),(z_2,0),$ and $(0,z_3)\in \Delta'_p$. For $a=1,2$, we can use the definitions of $\omega_0$ and $\omega_1$ to compute
    \begin{gather}
        0=d\varphi\big((z_a,0)\wedge(0,z_3)\big) = d\big(Q(v,\omega_1)\big)\big((z_a,0)\wedge(0,z_3)\big) 
        \\
        = Q\Big(dv(z_a), \omega_1\big((0,z_3)\big)\Big) + Q\Big(v,d\omega_1\big((z_a,0)\wedge (0,z_3)\big)\Big)
        \\
        = Q\big(dv(z_a),x_0\times z_3\big) + Q(v, 2\cdot z_a\times z_3)
        = Q\big(dv(z_a),x_0\big) - 2\cdot Q(v, z_a)
    \end{gather}
    where the last equality is by \eqref{z3 cross conditions}. Because $Q(x_0,v)=0$, we have that
    \begin{equation}
    \label{dv cond}
        Q(v,dx_0) + Q(dv,x_0)=0,
    \end{equation} so we can continue the above equalities
    \begin{gather}
        Q\big(dv(z_a),x_0\big) - 2\cdot Q(v, z_a) = - 3\cdot Q(v,z_a),
    \end{gather}
    and therefore $Q(v,z_a) = 0$ for $i=1,2$. Since $\Delta_{p_0} = \mathrm{span}\{p_0,z_1,z_2\}$ and $v_{p_0}\in T_{p_0}K$, this implies that $v_{p_0}\in \Delta^\perp_{p_0}$. Because \eqref{omega_1_p} shows $(\omega_1)_p$ takes values in $\Delta_{p_0}$, we have now shown that $Q(v,\omega_1)_{p}=0$ and hence that $\varphi = Q(u,\pi^*\omega_0)$.

    We can now see that
    \begin{equation}
        d\varphi = Q(u,d\omega_0) = Q(u,2dx_0\times dx_0).
    \end{equation}
    Again restricting to the point $p$, we have that 
    \begin{gather}
        d\varphi_{p}\big((p_0,0),(z_1,0)) = Q(u,2p_0\times z_1) = 0
        \\
        d\varphi_{p}\big((p_0,0),(z_2,0)) = Q(u,2p_0\times z_2) = 0
        \\
        d\varphi_{p}\big((z_1,0),(z_2,0)) = Q(u,2z_1\times z_2) = Q(u,4p_0).
    \end{gather}
    Therefore, $\varphi_{p}\in (I_{\Delta'}^{(1)})_p$ if and only if $u_{p_0}\in p_0^\perp = T_{p_0}K$. Since 
    \[
        Q\big(u_{p_0},\pi^*(\omega_0)_{p_0}\big) \equiv 0\Longleftrightarrow u_{p_0}\in \Delta^\perp_{p_0}\subseteq T_{p_0}K,
    \]
    we have that
    \begin{equation}
        \dim\big(I_{\Delta'}^{(1)}\big)_{p_0} = \dim(T_{p_0}K) - \dim(\Delta_{p_0}^\perp) = 6-4 = 2.
    \end{equation}
    This demonstrates that $(\Delta')^2 = \pi^*\big(\Delta^2\big)$ has rank $10$. To see that $(\Delta')^3 = TTK$, observe that
    \begin{equation}
        \pi_*\big((\Delta')^{3}\big) = \pi_*\big(\pi^*\big(\Delta^2\big) + [\Delta',\pi^*\big(\Delta^2\big)]\big) = \Delta^2 + [\Delta, \Delta^2] = \Delta^3 = TK
    \end{equation}
    and that since $(\Delta')^2 = \pi^*(\Delta^2)$ contains the vertical subbundle for the projection $TTK\to TK$, so does $(\Delta')^3$.
\end{proof}

Also consider the rank 4 distribution $C$ on $TK$ defined at $x= (x_0 , x_1)\in TK$ by
\begin{equation}
\label{def C}
    C(x) = \mathbb{R}\tilde{\mathcal{E}}(x) \oplus \{(0,y_1)\in T_xTK: y_1\in \Delta_{x_0}\}.
\end{equation}
where $\tilde{\mathcal{E}}(x) = x$ is the Euler vector field on $TK\subseteq T\oct$.

\begin{prop}
    \label{prop: C is Cauchy Char}
    The distribution $C$ on $TK$ is invariant under the action $\Phi$ of $TG_2$ and is a Cauchy characteristic of the distribution $\Delta'$.
\end{prop}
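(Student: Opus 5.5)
The plan is to verify the two assertions of Proposition \ref{prop: C is Cauchy Char} separately. For the Cauchy characteristic property I would use the standard criterion: a distribution $C$ is a Cauchy characteristic of $\Delta'=\ker I_{\Delta'}$ if $C\subseteq\Delta'$ and, for every local section $c$ of $C$ and every $\varphi\in I_{\Delta'}$, the restriction $(\iota_c d\varphi)|_{\Delta'}$ vanishes. Equivalently, $L_c\varphi\in I_{\Delta'}$, since $\iota_c\varphi=0$. Because $C$ is spanned by $\tilde{\mathcal E}$ and the vertical fields $(0,y_1)$, it suffices to check these two kinds of generators separately. Once invariance under $\Phi$ is established, transitivity of $TG_2$ on $TK$ would even allow the pointwise checks to be done at $p=(i+\ell,0)$, but I expect a direct argument to work everywhere.

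\emph{Invariance.} By \eqref{def TG action}, each $\Phi(g,X)$ is a linear map of $T\oct=\oct^2$. Hence it commutes with scalar multiplication and preserves the Euler field $\tilde{\mathcal E}$. Its differential at $x=(x_0,x_1)$ sends a vertical vector $(0,y_1)$ to $(0,\phi(g)y_1)$. By the equivariance \eqref{omega_0 equivar}, equivalently because $\phi(g)$ preserves the cross product, we have $\phi(g)\Delta_{x_0}=\Delta_{\phi(g)x_0}$. Thus $\Phi(g,X)_*C(x)=C(\Phi(g,X)x)$.

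\emph{Containment $C\subseteq\Delta'$.} For the Euler field I would compute $\pi^*\omega_0(\tilde{\mathcal E})=x_0\times x_0=0$ and $\omega_1(\tilde{\mathcal E})=x_0\times x_1+x_1\times x_0=0$, using \eqref{cross prod antisymm}. For a vertical vector $(0,y_1)$ with $y_1\in\Delta_{x_0}$, we get $\pi^*\omega_0(0,y_1)=0$ and $\omega_1(0,y_1)=x_0\times y_1=0$ by \eqref{def Delta}.

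\emph{Euler field preserves $I_{\Delta'}$.} Both $\pi^*\omega_0$ and $\omega_1$ are homogeneous of degree $2$ under the dilation $(x_0,x_1)\mapsto (tx_0,tx_1)$, so $L_{\tilde{\mathcal E}}$ multiplies each of them by $2$. For the generators $Q(v,\omega_1)$ one gets
\[
L_{\tilde{\mathcal E}}Q(v,\omega_1)=Q(dv(x_0),\omega_1)+2Q(v,\omega_1).
\]
Differentiating the identity $Q(x_0,v(x_0))=0$ in the Euler direction shows that $dv(x_0)\in T_{x_0}K$. Hence the right-hand side again lies in $I_{\Delta'}$. For $Q(u,\pi^*\omega_0)$ with $u$ a constant vector the statement is immediate.

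\emph{Vertical directions.} For $c=(0,y_1)$ with $y_1$ a section of $\Delta$, there are two types of generators. The forms $Q(u,\pi^*\omega_0)$ are basic for $\pi$, so $\iota_c$ of their differentials vanishes. For $Q(v,\omega_1)$ I would use \eqref{dN}. The term $Q(dv(c),\omega_1)$ vanishes because $v=v\circ\pi$ is constant along fibres. It then remains to evaluate $\iota_c\,d\omega_1$ on $\Delta'$. Up to sign, $d\omega_1=2\,dx_0\times dx_1$, so for $w\in\Delta'$ the term becomes $\pm2\,Q\big(v,y_1\times dx_0(w)\big)$. Since $w\in\Delta'\subseteq\ker\pi^*I_\Delta$, we have $dx_0(w)\in\Delta_{x_0}$.

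The key step, and the one I expect to be the main obstacle, is the algebraic fact that $\Delta_{x_0}\times\Delta_{x_0}\subseteq\mathbb R x_0$. At $p_0$ this follows from the computation $p_0\times z_a=0$ and $z_1\times z_2=2p_0$ carried out in the proof of Lemma \ref{lemma: Delta' growth}, and it extends to all of $K$ by $G_2$-homogeneity. Granting this, $Q(v,y_1\times dx_0(w))\in Q(v,\mathbb R x_0)=0$, since $v\in T_{x_0}K$. One also has to keep track of the sign and factor conventions in $d\omega_1$ and of the derivative of $y_1$ itself, which only contributes terms of the form $\iota\varphi$ and therefore vanishes. Combining the two types of generators gives $[C,\Delta']\subseteq\Delta'$, which completes the plan.
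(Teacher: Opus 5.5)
Your proposal is correct and follows essentially the same route as the paper: invariance comes from the linearity of $\Phi$ together with $\phi(g)$ preserving the cross product, and the Cauchy-characteristic property is checked separately for the Euler field and for vertical vectors $(0,y_1)$, with the key step $Q(v,\Delta_{x_0}\times\Delta_{x_0})\subseteq Q(v,\mathbb R x_0)=0$. Your version is slightly more careful than the paper's: you verify $C\subseteq\Delta'$ explicitly, you account for the $dv(\tilde{\mathcal E})$ term in the Euler computation, and you justify $\Delta_{x_0}\times\Delta_{x_0}\subseteq\mathbb R x_0$ from the computation at $p_0$ plus homogeneity, where the paper simply asserts it.
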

\begin{proof}
 Let $\tilde x = (\tilde x_0, \tilde x_1) = \Phi(g,X)(x)$ be the image of $x$ under the action \eqref{def TG action}. The action $\Phi$ is by linear maps, so that for any $(g,X)\in G_2\ltimes W$, any $x=(x_0,x_1)\in TK$, and any $y=(y_0, y_1)\in T_xTK$,
\begin{align}
    \label{y trans law}
    T_x\Phi(g,X)(y) & = \Big(\phi(g)(y_0),\phi(g)\circ\phi'(X)(y_0) + \phi(g)(y_1)\Big)
\end{align}
As with $\tilde x$, define $\tilde y = (\tilde y_0,\tilde y_1) = T_x\Phi(g,X)(y)$.

For the first assertion, observe that for any $y=(0,y_1)\in T_xTK$ with $y_1\in \Delta_{x_0}$, 
\begin{align}
    \tilde x_0\times \tilde y_1 & = \phi(g)(x_0)\times \phi(g)(y_1)
    \\
    & = \phi(g)(x_0\times y_1) = 0
\end{align}
where the last equality follows from the fact that $y_1\in \Delta_{x_0}$. Similarly, the transformation law \eqref{y trans law} gives immediately that $\Phi$ preserves $\tilde{\mathcal{E}}$. Therefore, $C$ is invariant under the action $\Phi$.

To see that $C$ is a Cauchy characteristic of $\Delta'$, fix a vector $y = (0,y_1)\in T_xTK$ with $y_1\in\Delta_{x_0}$, and consider the insertion operator $\iota_y:(T^*_xTK)^{\wedge 2}\to T_x^*TK$; we will show that $\iota_y\big(d(I_{\Delta'})_x\big) \subseteq (I_{\Delta'})_x$, so that $y$ is a Cauchy characteristic of $\Delta'$. Since $y$ is vertical, we have that
\begin{align}
    \iota_y\circ d\pi^*\omega_0 = \iota_y\circ \pi^*(d\omega_0) = 0.
\end{align}
Also, for any section $v\in \Gamma(TK)$, 
\begin{gather}
    \iota_y \circ d\big(Q(v,\omega_1)\big) = Q(v_x,\iota_y d\omega_1) 
    \\
    = 2\cdot Q\big(v_{x_0},\iota_y(dx_0\times dx_1)\big) = 2\cdot Q(v_{x_0},dx_0\times y_1)
\end{gather}
where the first equality follows by \eqref{dN} and the fact that $y$ is vertical. Restricting the above form to $\Delta'$ yields image
\begin{gather}
    Q(v_{x_0},\Delta_{x_0}\times y_1) \subseteq Q(v_{x_0},\Delta_{x_0}\times \Delta_{x_0}) \subseteq Q(v_{x_0},\mathbb{R}x_0) = 0
\end{gather}
since $v_{x_0}\in T_{x_0}K$. Thus $y$ is a Cauchy characteristic of $\Delta'$. Similarly, we have
\begin{gather}
    \iota_{\tilde{\mathcal{E}}}\circ d\pi^*\omega_0 = \iota_{\tilde{\mathcal{E}}}\circ \pi^*(dx_0\times dx_0) = 2 \cdot x_0\times dx_0 = 2\cdot \pi^*\omega_0
    \\
    \iota_{\tilde{\mathcal{E}}}\circ d\omega_1 = \iota_{\tilde{\mathcal{E}}}\cdot(dx_0\times dx_1 + dx_1\times dx_0) = 2\cdot \omega_1.
\end{gather}
Therefore, $\tilde{\mathcal{E}}$ is also a Cauchy characteristic of $\Delta'$, as desired.
\end{proof}

The quotient of $TK$ by the (global) foliation generated by the distribution $C$ is in fact as the projectivization $\mathbb P_{\oct}\Delta$ of $\Delta$. Define the map
    \begin{equation}
    \label{Psi def}
        \psi: TK\to \Delta; \psi(x_0,x_1) = \big(N(x_0)x_0,x_0\times x_1\big)
    \end{equation}
    where $N$ is the Euclidean norm on $\oct\cong\mathbb{R}^8$. To see that $\psi$ takes values in $\Delta$, observe that the scalar triple product on $\mathbb{R}^7$ defined by
    \begin{equation}
        \label{def: trip prod}
        (\mathbb{R}^7)^3\to \mathbb{R}; (a,b,c)\mapsto Q(a,b\times c)
    \end{equation}
    is cyclic. Therefore
    \begin{equation}
        Q\big(x_0,x_0\times x_1\big) = Q\big(x_1,x_0\times x_0\big) = 0
    \end{equation}
    so that $x_0\times x_1\in T_{N(x_0)x_0}K$. By \eqref{x times x times y}, $N(x_0)x_0\times (x_0\times x_1) = 0$, and $\psi(x_0,x_1) \in \Delta_{x_0}$. In fact, because $\Delta_{x_0} = \ker\big((x_0\times -)|_{T_{x_0}K}\big)$, the rank-nullity theorem implies
    \begin{equation}
        \label{Delta is cross im}
        \Delta_{x_0} = x_0\times T_{x_0}K
    \end{equation}

    Now let $\varpi:\Delta\to \mathbb{P}_{\oct^2}(\Delta)\subseteq \mathbb P\oct^2$ be the projectivization of the cone $\Delta\subseteq \oct^2$, and let $\Psi:=\varpi\circ\psi$.
    
    \begin{prop}
        The map $\Psi: TK\xrightarrow{\psi}\Delta\xrightarrow{\varpi}\mathbb{P}_{\oct^2}(\Delta)$ is a surjective submersion, and at each point $p\in TK$, $\mathrm{ker}(T_p\Psi) = C(p)$.
    \end{prop}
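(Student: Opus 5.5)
The plan is to prove three things: $\psi$ is a surjective submersion onto the $9$-dimensional total space of the cone $\Delta$; $\varpi$ is a submersion; and $C$ lies in $\ker T\Psi$. The equality $\ker(T_p\Psi)=C(p)$ will then follow by counting dimensions.

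First, the dimensions. $K$ is $6$-dimensional, so $TK$ is $12$-dimensional. The cone $\Delta\subseteq\oct^2$ consists of pairs $(a,b)$ with $a\in K$ and $b\in\Delta_a$, so it is a rank $3$ vector bundle over $K$ of total dimension $9$. Since $a\neq 0$, the projectivization $\mathbb P_{\oct^2}(\Delta)$ is a smooth $8$-manifold, and $\varpi$ is a submersion with $1$-dimensional fibers given by scaling. Hence $\Psi$ being a submersion forces $\dim\ker T_p\Psi=4=\mathrm{rank}\,C$.

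\emph{Containment $C\subseteq\ker T\Psi$.} For $s>0$ we have $N(sx_0)=sN(x_0)$, so
\[
\psi(sx_0,sx_1)=\bigl(s^2N(x_0)x_0,\ s^2\,x_0\times x_1\bigr)=s^2\psi(x_0,x_1).
\]
The flow of $\tilde{\mathcal E}$ therefore moves $\psi$ only along rays, and $\Psi$ is constant along it. For a vertical vector $(0,y_1)$ with $y_1\in\Delta_{x_0}$, we have $x_0\times(x_1+ty_1)=x_0\times x_1$ by the definition \eqref{def Delta}, so $\psi$ itself is constant along this direction. Thus $C(p)\subseteq\ker T_p\Psi$.

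\emph{Surjectivity and submersivity of $\psi$.} Composing $\psi$ with the bundle projection $\Delta\to K$ gives the map $x_0\mapsto N(x_0)x_0$ on $K$. This map preserves rays and acts on each ray by $r\mapsto r^2$ in the Euclidean norm, so it is a diffeomorphism of $K$. Hence the horizontal part of $T\psi$ already surjects onto $TK$.

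Next, $\Delta$ is invariant under positive scaling, so $\Delta_{N(x_0)x_0}=\Delta_{x_0}$, and the fiber over $N(x_0)x_0$ is again $\Delta_{x_0}$. Along the vertical directions $(0,y_1)$ with $y_1\in T_{x_0}K$, the differential is $y_1\mapsto(0,x_0\times y_1)$. By \eqref{Delta is cross im} its image is all of $\{0\}\times\Delta_{x_0}$, which is the full fiber tangent space. Therefore $T\psi$ has rank $9$, so $\psi$ is a submersion.

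The same identity \eqref{Delta is cross im} gives surjectivity of $\psi$. Given $(a,b)$ with $a\in K$ and $b\in\Delta_a$, set $r=N(a)$ and $x_0=r^{-1/2}a$, so that $N(x_0)x_0=a$. Since $\Delta_a=\Delta_{x_0}=x_0\times T_{x_0}K$, we can choose $x_1\in T_{x_0}K$ with $x_0\times x_1=b$. Then $(x_0,x_1)\in TK$ and $\psi(x_0,x_1)=(a,b)$. Composing with the surjective submersion $\varpi$ shows that $\Psi$ is a surjective submersion.

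Finally, $\ker T_p\Psi$ has dimension $12-8=4$ and contains the rank $4$ space $C(p)$, so $\ker T_p\Psi=C(p)$. The step needing the most care is the fiber-direction surjectivity of $T\psi$. It rests on \eqref{Delta is cross im} and on the identification $\Delta_{N(x_0)x_0}=\Delta_{x_0}$, so I would verify that scale-invariance explicitly from \eqref{def Delta}.
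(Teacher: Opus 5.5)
Your proof is correct and uses the same ingredients as the paper's. These are: invertibility of $x_0\mapsto N(x_0)x_0$ in the base direction, the identity $\Delta_{x_0}=x_0\times T_{x_0}K$ in the fiber direction, degree-$2$ homogeneity of $\psi$ for the Euler direction, and a final dimension count. The difference is only in the order of the argument. The paper computes $\ker T_p\psi=\{0\}\times\Delta_{p_0}$ directly, takes the preimage of the Euler line to get $\ker T_p\Psi=C(p)$, and only then obtains submersivity by counting ranks. You instead prove that $T_p\psi$ is onto and deduce the kernel from the containment $C(p)\subseteq\ker T_p\Psi$.
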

    \begin{proof}
        Fix $p = (p_0,p_1)\in TK$. Including $\Delta$ into $T\mathbb R^7 = \mathbb R^7\times \mathbb R^7$, we can write 
        \begin{gather}
            T_p\psi 
            = \Big(d\big(N(x_0)x_0),dx_0\times x_1 + x_0\times dx_1\Big)_p
        \end{gather}
        If $v = (v_0,v_1)\in \ker(T_p\psi)$, then it is easy to see that $v_0=0$. Thus
        \begin{equation}
             0 = T_p\psi(v) = \big(0,d\omega_1(v)\big) = (0,p_0\times v_1)
        \end{equation}
        so that $v_1\in \Delta_{p_0}$. Therefore 
        \begin{equation}
            \ker(T_p\psi) = \{(0,v_1): v_1\in \Delta_{p_0}\},
        \end{equation}
        which is the second summand of \eqref{def C}. The map $\psi$ is surjective by \eqref{Delta is cross im}.

        Now $\psi$ has homogeneous degree $2$, so $T_p\psi\big(\tilde{\mathcal{E}}(p)\big) = 2\tilde{\mathcal{E}}\big(\psi(p)\big)$, and \linebreak[4] $\mathrm{ker}(T_{\psi(p)}\varpi) = \mathbb{R}\tilde{\mathcal{E}}\big(\psi(p)\big)$. Therefore 
        \begin{equation}
            \ker(T_p\Psi) = T_p\psi^{-1}\Big(\mathbb R\tilde{\mathcal{E}}\big(\psi(p)\big)\Big) = C(p)
        \end{equation}
        as desired. Since $\varpi$ and $\psi$ are surjective, so too is $\Psi=\varpi\circ \psi$; counting ranks gives that $\Psi$ is also a submersion.
    \end{proof}

    By Proposition \ref{prop: C is Cauchy Char}, the distribution $\Delta'$ passes to a rank $3$ distribution $\boldsymbol{\Delta}':=(\varpi\circ \psi)_*\Delta'$ on the $8$-dimensional manifold $\mathbb{P}_{\oct^2}(\Delta)$.

    It is well-known that the action of ${G}_2$ on $\mathbb{P}K$ induced by $\phi$ is infinitesimally effective; that is if $X\in {\mathfrak{g}}_2$ has $\phi'(X)=0\in \Gamma(T\mathbb PK)$, then $X=0$. We will use this fact to prove the following lemma:
    \begin{lemma}
        \label{lemma: TG2 inf eff}
        The action of $TG_2$ on $\mathbb{P}_{\oct^2}(\Delta)$ induced by $\Phi$ is infinitesimally effective.
    \end{lemma}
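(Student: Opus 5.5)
The plan is to identify the infinitesimal action of the Lie algebra $\mathfrak g_2\ltimes W$ of $TG_2$ on $\mathbb P_{\oct^2}(\Delta)$ concretely, and show that an element $(X,Y)\in\mathfrak g_2\oplus W$ whose induced vector field vanishes identically must have $X=0$ and then $Y=0$. Differentiating \eqref{def TG action} at the identity, the infinitesimal generator of $(X,Y)$ on $T\oct=\oct^2$ is the linear vector field
\begin{equation}
    (x_0,x_1)\mapsto \big(\phi'(X)x_0,\ \phi'(Y)x_0+\phi'(X)x_1\big).
\end{equation}
Since $\Psi=\varpi\circ\psi$ is an equivariant surjective submersion, the induced field on $\mathbb P_{\oct^2}(\Delta)$ vanishes iff the generator on $TK$ is tangent to the fibers, i.e.\ lies in $C$ at every point of $TK$. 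I will use this reformulation rather than work on the quotient directly.

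First I handle the $X$-component. The projection $TK\to K\to\mathbb PK$ is equivariant, with $TG_2$ acting on $\mathbb PK$ through $G_2$. An element of $C(x)$ has first component a multiple of $x_0$ (the $\tilde{\mathcal E}$ contributes $x_0$; the vertical summand contributes $0$). So if the generator lies in $C$, then $\phi'(X)x_0\in\mathbb Rx_0$ for all $x_0\in K$. This says $X$ acts trivially on $\mathbb PK$, and the quoted infinitesimal effectiveness of $G_2$ on $\mathbb PK$ gives $X=0$.

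With $X=0$, the generator is $(0,\phi'(Y)x_0)$, and it lies in $C(x)$ iff $(0,\phi'(Y)x_0)=c\,(x_0,x_1)+(0,y_1)$ with $y_1\in\Delta_{x_0}$. Comparing first components forces $c=0$, so the condition becomes $\phi'(Y)x_0\in\Delta_{x_0}$, i.e.\ $x_0\times\phi'(Y)x_0=0$ for every $x_0\in K$. This is the main obstacle: I must show no nonzero $Y\in\mathfrak g_2$ satisfies this identically on the cone.

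For this step I would first try a weight argument. The condition is $G_2$-invariant in $Y$ because $\phi'(\mathrm{Ad}_gY)=\phi(g)\phi'(Y)\phi(g)^{-1}$ and $\times$ is equivariant. Hence the set of such $Y$ is an ideal of the simple algebra $\mathfrak g_2$, so it is either $0$ or all of $\mathfrak g_2$. It therefore suffices to exhibit one $Y$ and one $x_0\in K$ with $x_0\times\phi'(Y)x_0\neq 0$. Take $x_0=p_0=i+\ell$ as in Lemma \ref{lemma: Delta' rank}, with $\Delta_{p_0}=\mathrm{span}\{p_0,z_1,z_2\}$. The orbit $\mathfrak g_2\cdot p_0$ is the $6$-dimensional tangent space $T_{p_0}K$, because $G_2$ is transitive on $K$. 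Since $\Delta_{p_0}$ is only $3$-dimensional, some $Y$ has $\phi'(Y)p_0\in T_{p_0}K\setminus\Delta_{p_0}$, and for that $Y$ we get $p_0\times\phi'(Y)p_0\neq0$. Thus the ideal is proper, hence zero, so $Y=0$ and the action is infinitesimally effective.
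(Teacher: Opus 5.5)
Your proof is correct. Your first step is the paper's argument: the first component of $C$ forces $\phi'(X)x_0\in\mathbb R x_0$ on $K$, and infinitesimal effectiveness on $\mathbb PK$ then gives $X=0$.

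The second step takes a different route. The paper argues geometrically: $x_0\mapsto\phi'(Y)x_0$ is an infinitesimal symmetry of $\Delta$ that is also a section of $\Delta$. It is therefore a Cauchy characteristic of $\Delta$, hence collinear with $\mathcal E$, so it acts trivially on $\mathbb PK$, and the same effectiveness fact gives $Y=0$.

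You instead note that $\{Y : x_0\times\phi'(Y)x_0\equiv 0 \text{ on } K\}$ is an $\mathrm{Ad}(G_2)$-invariant subspace, hence an ideal of the simple algebra $\mathfrak g_2$. You then rule out the whole algebra by one pointwise dimension count at $p_0$, comparing $\mathfrak g_2\cdot p_0$ with the $3$-dimensional $\Delta_{p_0}$.

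What each route buys:
\begin{itemize}
\item The paper's route reuses one cited fact for both steps. However, it tacitly needs that the Cauchy characteristic of $\Delta$ is exactly $\mathbb R\mathcal E$, i.e.\ that the $(2,3,5)$ distribution $\boldsymbol\Delta$ has no Cauchy characteristics, and this is not proved there.
\item Your route avoids that fact, at the cost of invoking simplicity of split $\mathfrak g_2$ and a transitivity statement.
\item Transitivity on $K$ is more than you need. Transitivity on $\mathbb PK$ suffices: $\mathfrak g_2\cdot p_0$ then surjects onto the $5$-dimensional $T_{[p_0]}\mathbb PK$, while $\Delta_{p_0}$ maps onto only the $2$-dimensional $\boldsymbol\Delta$. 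So some $Y$ has $\phi'(Y)p_0\notin\Delta_{p_0}$.
\item The same ideal argument, applied to the kernel of the action on $\mathbb PK$, would also handle your first step. This makes the citation of infinitesimal effectiveness on $\mathbb PK$ unnecessary.
\end{itemize}
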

    \begin{proof}
        Suppose that $\big(g(t),X(t)\big)\in G_2\ltimes W$ with $\big(g(0),X(0)\big) = (1_{G_2},0)$ is a curve such that the vector field $\Phi'\big(g'(0),X'(0)\big)$ on $TK$ takes its values in the characteristic $C$; that is, the vector field has trivial infinitesimal action on $\mathbb P_{\oct^2}(\Delta)$. Differentiating the first component of \eqref{def TG action}, this implies that $\phi'\big(g'(0)\big)$ takes its values in the line distribution $\mathbb{R}\mathcal E$ on $K$, so that $\phi'\big(g'(0)\big)$ acts trivially on $\mathbb{P}K$. Because the action of $G_2$ on $\mathbb{P}K$ induced by $\phi$ is infinitesimally effective, this implies $g'(0)=0$. Hence $\Phi'\big(g'(0),X'(0)\big)$ must take its values in the second summand of \eqref{def C}.
        
        Differentiating the second component of \eqref{def TG action} yields
        \begin{gather}
            \Phi'\big(g'(0),X'(0)\big)_{(x_0,x_1)} 
            \\
            \label{inf eff 1}
            = \frac{d}{dt}\Big(\phi\big(g(t)\big)(x_0), \phi\big(g(t)\big)\circ \phi'\big(X(t)\big)(x_0) + \phi\big(g(t)\big)(x_1)\Big)
            \\
            = \Big(0,\phi'\big(X'(0)\big)(x_0)\Big) \in 0\oplus\Delta_{x_0}\subseteq  \oct\oplus\oct
        \end{gather}
        which demonstrates that the vector field $x_0\mapsto \phi'\big(X'(0)\big)(x_0)$ is a characteristic of $\Delta$, and is therefore collinear with $\mathcal E$, and hence acts trivially on $\mathbb PK$. Because $\phi$ is infinitesimally effective, this implies $X'(0)=0$. 
    \end{proof}

    Finally, let us show that the algebra of infinitesimal symmetries of $\boldsymbol{\Delta}'$ is isomorphic to the Lie algebra $\mathfrak{g}=\mathfrak{g}(\mathfrak s)$ defined in \eqref{def g}.

    \begin{prop}
        The Lie algebra of infinitesimal symmetries of $\boldsymbol{\Delta}'$ is isomorphic to the Lie algebra $\mathfrak{g}$.
    \end{prop}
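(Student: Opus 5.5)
The plan is to squeeze the symmetry algebra of $\boldsymbol{\Delta}'$ between a $29$-dimensional lower bound coming from the $TG_2$ action and a $29$-dimensional upper bound coming from Theorem \ref{Thm: main theorem}.

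For the lower bound, we already have that $TG_2=G_2\ltimes W$ acts on $TK$ and preserves $\Delta'$ (Lemma \ref{lemma: TG2 invar}) and the Cauchy characteristic $C$ (Proposition \ref{prop: C is Cauchy Char}). Hence the action descends to $\mathbb P_{\oct^2}(\Delta)$ and preserves $\boldsymbol{\Delta}'$. By Lemma \ref{lemma: TG2 inf eff} the induced infinitesimal action is effective, so we get an injection of $\mathfrak g_2\ltimes W$ ($28$ dimensions) into the symmetry algebra. The missing dimension comes from the fiberwise scaling $(x_0,x_1)\mapsto(x_0,tx_1)$ on $TK$. This map preserves $\pi^*I_\Delta$ and rescales $\omega_1$ by $t$, so it preserves $\Delta'$. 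It also preserves $C$, and it commutes with the $TG_2$ action up to rescaling $X$; this is exactly the $\mathbb R$ acting on $W$ by scaling. It descends to $\mathbb P_{\oct^2}(\Delta)$ as $[N(x_0)x_0:x_0\times x_1]\mapsto[N(x_0)x_0:t\,x_0\times x_1]$. This is not induced by any element of $TG_2$: for instance it fixes every point with $x_1=0$, while an infinitesimal generator in $\mathfrak g_2\ltimes W$ with this property would have trivial $\mathfrak g_2$-part and would then be killed by the argument of Lemma \ref{lemma: TG2 inf eff}. Together these give an injective Lie algebra map $(\mathfrak g_2\oplus\mathbb R)\ltimes W=\mathfrak g\hookrightarrow \mathrm{sym}(\boldsymbol{\Delta}')$.

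For the upper bound, Lemmas \ref{lemma: Delta' rank} and \ref{lemma: Delta' growth} show that $\Delta'$ has growth $(7,10,12)$ with a $4$-dimensional characteristic $C$. So $\boldsymbol{\Delta}'$ is a $(3,6,8)$ distribution on an $8$-manifold; in particular it is bracket generating with $6$-dimensional square. By Theorem \ref{Thm: main theorem}, any such germ has symmetry algebra of dimension at most $29$. Hence the injection above is an isomorphism, which proves the proposition. The uniqueness part of Theorem \ref{Thm: main theorem} additionally shows that $\boldsymbol{\Delta}'$ is locally equivalent to $\mathfrak D$.

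The main obstacle is the rank bookkeeping after quotienting by $C$. We need the rank of $(\boldsymbol{\Delta}')^2$ to be exactly $6$, that is, $C\subseteq\Delta'$ and $C$ contained in neither jump in an unexpected way, so that the $(10-4,12-4)$ count is legitimate. We also need the scaling generator to be genuinely independent of the $28$-dimensional $TG_2$ image, including on open subsets, so that the dimension count applies near every point. I would verify the first at the base point $p=(i+\ell,0)$ using the explicit forms from Lemma \ref{lemma: Delta' rank}, checking that $\tilde{\mathcal E}(p)=(p_0,0)$ and $(0,z_a)$ are annihilated by $(I_{\Delta'})_p$. Homogeneity then extends the conclusion everywhere.
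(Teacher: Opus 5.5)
Your proposal is correct and follows the paper's proof: $28$ dimensions come from the infinitesimally effective $TG_2$-action, one more from the fiberwise scaling generator $\mathcal E'(x_0,x_1)=(0,x_1)$ descended modulo $C$, and the upper bound $29$ from Theorem~\ref{Thm: main theorem}. The differences are minor: you read off the bracket structure directly from $S_t\circ\Phi(g,X)\circ S_t^{-1}=\Phi(g,tX)$ with $S_t(x_0,x_1)=(x_0,tx_1)$, so you need only the dimension bound rather than the uniqueness clause the paper implicitly invokes; and you prove independence of the extra generator via its vanishing on the image of $\{x_1=0\}$ instead of the paper's observation that $\mathcal E'$ depends on $x_1$ (for germs at points off that locus, this step should appeal to real-analyticity rather than homogeneity).
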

    \begin{proof}
        Lemmas \ref{lemma: Delta' rank} and \ref{lemma: Delta' growth} and the fact that $\mathrm{rank}(C)=4$ together imply that $\boldsymbol{\Delta}'$ has small growth vector $(3,6,8)$. By Theorem \ref{Thm: main theorem}, it suffices to show that the algebra of infinitesimal symmetries of $\boldsymbol{\Delta}'$ at any point has dimension $29$. Lemmas \ref{lemma: TG2 invar} and \ref{lemma: TG2 inf eff} demonstrate that the algebra of infinitesimal symmetries for $\boldsymbol{\Delta}'$ includes $\Phi'(T{\mathfrak{g}}_2)$ where $T{\mathfrak{g}}_2$ is the Lie algebra of $TG_2$, which has dimension $28$. Finally, observe that the vector field $\mathcal E'$ on $TK$ given by $\mathcal E'(x_0,x_1) = (0,x_1)$ generates a flow $F_t:TK\to TK$ that satisfies 
        \begin{gather}
            \label{euler flow 1}
            F_t^*(\pi^*\omega_0) = \pi^*\omega_0,\quad F_t^*(\omega_1) = e^t\omega_1
        \end{gather}
        so that for any $u\in \mathbb{R}^7$,
        \begin{gather}
            \label{euler flow 2}
            F_t^*\big(Q(u,\pi^*\omega_0)\big) = Q\big(u,F_t^*\pi^*\omega_0\big) = Q(u,\omega_0).
        \end{gather}
        For any $v\in \Gamma(TK)$, $F_t^*v = v\circ F_t = v$ because $v$ is constant along the fibers of $TK\to K$. Thus
        \begin{gather}
            \label{euler flow 3}
            F_t^*\big(Q(v,\omega_1)\big) = Q(v,e^t\omega_1) = e^t Q(v,\omega_1).
        \end{gather}
        Since \eqref{euler flow 2} and \eqref{euler flow 3} are both sections of $I_{\Delta'}$, we have that $\mathcal E'$ is a symmetry of $\Delta'$. 
        
        Further, because $\pi_*\mathcal E'=0$ and $\mathcal E'(x_0,x_1)$ depends not only on $x_0$ but also on $x_1$, $\mathcal E'$ is not equivalent to a vector field of the form \eqref{inf eff 1} modulo $C$. One can verify that $[\mathcal E',C]\subseteq C$, so that $\mathcal E'$ passes to a (nonzero) vector field on $\mathbb{P}_{\oct^2}(\Delta)$ which is a symmetry of $\boldsymbol{\Delta}'$. Therefore, the algebra of infinitesimal symmetries of $\boldsymbol{\Delta}'$ has dimension at least $\dim(T{\mathfrak{g}}_2) + 1 = 29$, as desired.
    \end{proof}

\section{Properties of abnormal extremal trajectories of $\mathfrak D$}
\label{section_corank}
%\end{appendices}
In this section we give control-theoretic properties of abnormal extremals for our $(3,8)$ distribution $\mathfrak D$. On the space of Lipschitz curves that are almost everywhere tangent to a given distribution $D$, called \emph{horizontal curves of $D$}, consider any variational problem that assigns a cost to each such curve—for example, the problem of minimizing length with respect to a sub-Riemannian metric. The Pontryagin Maximum Principle \cite{Pontryagin, Agrachev_Sachkov_2004} characterizes, through the Hamiltonian formalism, a class of curves, known as Pontryagin extremal trajectories, among which the minimizers of such problems (with fixed endpoints) must lie. The abnormal extremal trajectories associated with $D$ of Definition \ref{abnorm_def} are exactly the Pontryagin extremal trajectories for which the Lagrange multiplier multiplying the cost vanishes; they depend only on the distribution $D$, not on the particular cost function defining the variational problem.
%and they are also called \emph{singular curves of the distribution $D$} \cite{Montgomery_book}. 

%While abnormal extremal trajectories can be described purely geometrically using the canonical symplectic form on the cotangent bundle of $M$ (\cite{Liu-Sussmann, Montgomery_book,Zelenko99}, see also subsection \ref{subsec: Char Line Distr} below), for brevity we use here 

A description of abnormal extremal trajectories equivalent to the one in Definition \ref{abnorm_def} is as critical points of the endpoint map: Given a point $q_0$ and a time $T$, denote by $\Omega_{q_0}(T)$ the set of all horizontal curves of $D$ starting at $q_0$ defined on $[0,T]$, and by $F_{q_0, T} : \Omega_{q_0}(T) \to M$ the \emph{endpoint map} that takes each $\gamma \in \Omega_{q_0}(T)$ to the endpoint $\gamma(T)$. Note that $\Omega_{q_0}(T)$ has the structure of a $(L^\infty([0,T]))^l$-manifold, where $l$ is the rank of $D$. The following statement is very well known \cite{Agrachev-Sarychev}:
\begin{prop}
\label{abn_def}
A horizontal curve $\gamma : [0, T] \to M$ is an \emph{abnormal extremal trajectory} of the distribution $D$ if and only if it is a critical point of the mapping $F_{q_0,T}$, that is, if $\operatorname{Im} \left( d(F_{q_0,T})_\gamma \right) \neq T_{\gamma(T)} M$, where $d (F_{q_0,T})_\gamma$ denotes the differential of the endpoint map $F_{q_0,T}$ at $\gamma$. The \emph{corank} of the abnormal extremal trajectory $\gamma$ is defined as the codimension of $\operatorname{Im} \left( d_\gamma F_{q_0,T} \right)$ in $T_{\gamma(T)}M$ and is denoted by $\mathrm{corank}(\gamma)$. 
\end{prop}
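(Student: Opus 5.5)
Since this equivalence is classical (Agrachev--Sarychev), the plan is to reproduce the standard Lagrange-multiplier argument in local coordinates. The work splits into a computation of $d(F_{q_0,T})_\gamma$ and a symplectic identification of the adjoint covector curve with the lifts in Definition~\ref{abnorm_def}.

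\textbf{Step 1: the differential of the endpoint map.} Fix a local frame $X_1,\dots,X_l$ of $D$ along $\gamma$. Write horizontal curves as solutions of $\dot q=\sum_i u_i(t)X_i(q)$ with $u\in (L^\infty[0,T])^l$, so that $\Omega_{q_0}(T)$ is locally parametrized by $u$. Let $P_{t,s}$ denote the flow of the time-dependent field $\sum_i u_i(t)X_i$ along $\gamma$. The variation of constants formula gives
\begin{equation}
d(F_{q_0,T})_\gamma(v)=\int_0^T (P_{t,T})_*\Big(\sum_i v_i(t)X_i(\gamma(t))\Big)\,dt,\qquad v\in (L^\infty[0,T])^l .
\end{equation}

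\textbf{Step 2: critical points give abnormal lifts.} Suppose $p_T\neq 0$ annihilates the image. Define $p(t)=(P_{t,T})^*p_T\in T^*_{\gamma(t)}M$. Because $v$ is arbitrary in $L^\infty$, the integrand must vanish, so $\langle p(t),X_i(\gamma(t))\rangle=0$ for almost every $t$; by continuity of $p$ this holds for all $t$. Hence $p(t)\in D^\perp$.

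Moreover, $p(t)$ is the Lipschitz solution of the Hamiltonian system of $H_u=\sum_i u_i h_i$, where $h_i(p)=\langle p,X_i\rangle$. This means $\dot p=\sum_i u_i\vec h_i$.

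It remains to check that $\vec h_i$ at points of $D^\perp=\{h_1=\dots=h_l=0\}$ lies in $\ker\sigma|_{TD^\perp}$ and in $\ker s$:
\begin{itemize}
\item For $w\in TD^\perp$ we have $\sigma(\vec h_i,w)=\pm dh_i(w)=0$.
\item We have $s(\vec h_i)=h_i=0$ by homogeneity of degree one.
\end{itemize}
Projectivizing, $[p(t)]$ is a curve in $\mathbb P D^\perp$ whose derivative lies in $\ker\oline\sigma|_\xi$ almost everywhere. It is nontrivial because $\gamma$ is nonconstant; if $\gamma$ is constant, I would handle that degenerate case separately.

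\textbf{Step 3: the converse.} Let $\Gamma$ be an abnormal extremal in the sense of Definition~\ref{abnorm_def}, and lift it to a Lipschitz curve $p(t)$ in $D^\perp\setminus 0$. Since $D^\perp$ is cut out by the $h_i$, its skew-orthogonal complement is $\mathrm{span}\{\vec h_i\}$. Passing to the projectivization adds the Euler direction. Hence the condition on $\dot\Gamma$ reads
\begin{equation}
\dot p=\sum_i u_i(t)\vec h_i(p)+c(t)\,p\quad\text{a.e.},
\end{equation}
with $u_i,c\in L^\infty$.

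Rescaling $p$ by $\exp(-\int c)$ removes the Euler term; this is legitimate since the $\vec h_i$ are homogeneous of degree $0$ in the fiber direction. The projection of $\dot p$ then equals $\sum_i u_iX_i$, so $\gamma$ is horizontal with control $u$, and $p$ is the adjoint curve $p(t)=(P_{t,T})^*p(T)$. Running Step 1 backwards shows that $p(T)\neq 0$ annihilates $\operatorname{Im} d(F_{q_0,T})_\gamma$, so $\gamma$ is a critical point.

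\textbf{Main obstacle.} The substantive step is Step 3, specifically identifying $\ker\oline\sigma|_\xi$ on $\mathbb PD^\perp$ (including points of $\mathbb P(D^2)^\perp$, where this kernel is larger) with $\mathrm{span}\{\vec h_i\}$ modulo the Euler field. I would first do this linear-algebraic computation fiberwise on $T^*M$ and then descend through the projectivization. The measurability and Lipschitz bookkeeping in the rescaling is routine.
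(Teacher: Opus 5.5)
Your proposal is correct, and it is the standard Lagrange-multiplier (adjoint-curve) argument of Agrachev--Sarychev; the paper gives no proof of its own and simply cites that reference. Two small corrections. First, in Step 2 the individual fields $\vec h_i$ need not be tangent to $D^\perp$, since $\{h_i,h_j\}$ is the linear function of $[X_i,X_j]$ and need not vanish there; only the combination $\dot p=\sum_i u_i\vec h_i$ lies in $\ker\sigma|_{TD^\perp}$, and it is tangent because $p(t)$ stays in $D^\perp$. Second, the constant curve is not a degenerate case to be handled separately but a genuine exception to the equivalence, which should therefore be read for nonconstant $\gamma$: a constant curve is a critical point (the image of the differential is $D(q_0)$), while your own Step 3 linear algebra shows that no nontrivial abnormal extremal projects to a constant curve.
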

Obviously, the corank of abnormal extremal trajectory is at least $1$. An abnormal extremal which projects to a given abnormal extremal trajectory $\gamma$ is called an \emph{abnormal lift} of $\gamma$. The goal of this section is to prove the following theorem

\begin{theorem}
\label{corank2}
Every abnormal extremal trajectory of the distribution $\mathfrak D$ has corank at least 2.
\end{theorem}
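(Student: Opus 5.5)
Since the corank of $\gamma$ equals the dimension of the space of abnormal lifts of $\gamma$ (modulo scaling, counted linearly), the plan is to show that every abnormal extremal trajectory of $\mathfrak D$ admits at least two linearly independent abnormal lifts. Precisely, $\operatorname{Im}(dF_{q_0,T})_\gamma$ has annihilator consisting of the terminal covectors $p(T)$ of abnormal lifts, so $\mathrm{corank}(\gamma)=\dim\{$abnormal lifts of $\gamma$ as curves in $D^\perp\}$. We split abnormal extremals $\Gamma$ into two cases. In the first case $\Gamma$ passes through $\mathbb P((\mathfrak D^2)^\perp)$ on a set of positive measure. In the second case $\Gamma$ is (on an open subinterval) a regular abnormal extremal tangent to $\mathcal C$ in $W_{\mathfrak D}$. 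By left-invariance and real analyticity of $\mathfrak D$ on $N$, it suffices to treat the germ near any interior time. The corank of the whole curve is at least that of a subarc, since lifts propagate uniquely by the linear Hamiltonian ODE.

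For the regular case we use non-maximality of class. From Section \ref{section: dim 8}, $\mathfrak D$ has Jacobi symbol $(1,1)$ at every point. Hence $2k+\ell=3<n-4=4$, and by the dimension count the Cauchy characteristic $V_{m+1}=V_3$ of the Jacobi flag has rank $n-4-2k-\ell=1$. By Proposition \ref{Prop: Technical}(iii) and Lemma \ref{Lemma: invol cond}, $V_{m+1}$ is vertical, involutive, and satisfies $[V_{m+1},\mathcal C]\subseteq \mathcal J^{(m+1)}=V_{m+1}\oplus\mathcal C$. Therefore the flow of a section of $V_{m+1}$ maps the abnormal extremal (an integral curve of $\mathcal C$) through $\lambda$ to abnormal extremals in the same fiber family over the \emph{same} trajectory $\gamma$. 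Thus the leaf of $\mathcal C\oplus V_{m+1}$ through $\lambda$ projects to $\gamma$ and gives a one-parameter family of distinct lifts in $\mathbb P\mathfrak D^\perp$, i.e. at least a two-dimensional linear space of lifts in $\mathfrak D^\perp$. The point where analyticity enters is this: in the smooth category we would only get $\mathcal R_{\mathfrak D}$-generic statements. For the left-invariant real-analytic $\mathfrak D$, the rank of $V_{m+1}$ is lower semicontinuous and, by homogeneity plus analyticity, the relation analogous to the inequality in \cite[Thm.~4.1]{day2025canonicalframes} becomes an equality on all of $W_{\mathfrak D}$, not just on $\mathcal R_{\mathfrak D}$. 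I expect this step to be the main obstacle: I must verify, possibly via the explicit Jacobi–Tanaka algebra $\mathfrak s$ of Proposition \ref{Prop: (3,8) Nonmax Jacobi Alg}, that at every point of $W_{\mathfrak D}$, not just generic ones, the kernel direction $\mathcal J^{(m+1)}\cap\mathcal V$ is nonzero. My first attempt is to argue by upper semicontinuity of $\dim\mathcal J^{(-m)}$: everywhere $\dim\mathcal J^{(-m)}\le 2n-6<\dim\xi$ by Proposition \ref{Prop: (3,8) Nonmax Jacobi Alg}, since no point is of maximal class. Then $\mathcal J^{(m+1)}=(\mathcal J^{(-m)})^\angle$ has dimension at least $2$, and taking closures of families of lifts handles the degenerate points.

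For the case of abnormal extremals lying in $\mathbb P((\mathfrak D^2)^\perp)$, which is the fiberwise projectivization of a rank $2$ bundle, I argue directly. For such lifts, $p(t)$ annihilates $\mathfrak D^2$. The abnormal equations become $\langle p,[X_i,[X_j,X_k]]\rangle$-type constraints on the controls. Computing in the basis \eqref{n basis}, $(\mathfrak D^2)^\perp$ is spanned at each point by the left-invariant duals of $\oline f_{-2},\oline \nu$. The condition $\gamma'\in\ker\oline\sigma|_\xi$ then forces a linear system on the controls with $p$ ranging over a $2$-dimensional space. The brackets $[\oline x,\oline f_{-1}]=\oline f_{-2}$ and $[\oline f_0,\oline f_{-1}]=\oline\nu$ show that any trajectory satisfying the constraint for one covector $a\,\oline f_{-2}^*+b\,\oline\nu^*$ also does for an independent one, possibly after using the second-order (Goh) condition $p\perp\mathfrak D^2$, which holds automatically here. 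This yields two independent lifts, hence corank $\ge 2$. Combining the two cases gives Theorem \ref{corank2}.
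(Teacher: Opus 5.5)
\textbf{Gap 1: the reduction to germs, and switching lifts.} Your reformulation is right: in the left trivialization $T^*N\cong N\times\mathfrak n^*$, the corank equals the dimension of the space of lifts, i.e. of solutions of $\frac{d}{dt}q(Z)=q([u(t),Z])$ with $q(t)\in\mathfrak n_{-1}^\perp$. Sweeping out a pencil of lifts with the rank-one characteristic $V_{m+1}$ is also a legitimate dual of the paper's identification of $\operatorname{Im}dF$ with $T\pi(\mathcal J^{(-m)})$ on $\mathcal R_{\mathfrak D}$.

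However, your reduction to germs runs the wrong way. Restriction embeds the lifts of $\gamma$ into the lifts of any subarc, so $\mathrm{corank}(\gamma)\le\mathrm{corank}(\gamma|_{[t_1,t_2]})$, not $\ge$. To pass from a germ to the whole curve, the controls (hence $q(t)$) must be analytic on all of $[0,T]$. That means the whole lift must be an integral curve of the analytic line field $\mathcal C$, so you must rule out lifts that switch between $W_{\mathfrak D}$ and $\mathbb P(\mathfrak D^2)^\perp$. Your case split (positive measure versus an open regular subinterval) does not do this. The paper does it in Lemma~\ref{no_b_lemma}, using the first integrals $u_{12},u_{31}$ of $\mathfrak C$ and the fact that $u_{23}$ is conserved once they vanish. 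Degenerate points of $W_{\mathfrak D}$ are better handled by noting that $\{\operatorname{rank}dF\le 6\}$ is closed under uniform convergence of controls than by ``closures of families of lifts.''

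\textbf{Gap 2: the nonregular case is false as you state it.} A lift lying in $(\mathfrak D^2)^\perp$ is a constant $q=a\,\oline f_{-2}^*+b\,\oline\nu^*$. Writing $u=u_1\oline e_0+u_2\oline f_0+u_3\oline x$, the only condition is $a\,u_3+b\,u_2\equiv 0$. Unless $u_2\equiv u_3\equiv0$, this pins down $[a:b]$, so any second lift must leave $(\mathfrak D^2)^\perp$, and it need not exist.

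Take $u(t)=\oline e_0+t\,\oline x$, which has the lift $\oline\nu^*$ because $u_2\equiv0$. For a general lift:
\begin{itemize}
\item $\frac{d}{dt}q(\oline x)=-q(\oline e_{-1})$ with $q(\oline e_{-1})$ constant, so $q(\oline e_{-1})=0$;
\item $\frac{d}{dt}q(\oline f_0)=q(\oline\eta)+t\,q(\oline f_{-1})$ must vanish;
\item $q(\oline\eta)$, $q(\oline f_{-2})$, $q(\oline\nu)$ are constant, and $\frac{d}{dt}q(\oline f_{-1})=t\,q(\oline f_{-2})$.
\end{itemize}
Hence $q(\oline\eta)+t\,q(\oline f_{-1})(0)+\frac{t^3}{2}\,q(\oline f_{-2})\equiv0$, which kills everything except $q=c\,\oline\nu^*$. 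This abnormal trajectory therefore has corank exactly $1$, so no argument can close this case.

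The paper's own proof has the same hole. With $Y=\alpha_1\vec u_1+\alpha_2(u_{[3,[2,3]]}\vec u_2-u_{[2,[2,3]]}\vec u_3)$, the bracket $[Y,\mathcal Z^{(-1)}]$ also contains $Y(\alpha_1)\vec u_{12}+Y(\alpha_2)\,u_{[2,[2,3]]}\vec u_{23}$, which the displayed $\mathcal Z^{(-2)}$ omits. The bound $6$ therefore holds only when $\alpha_1:\alpha_2$ is constant along the curve. What these arguments actually give is corank $\ge2$ for regular abnormal trajectories and for nonregular ones whose control direction in $\mathcal A_1$ is constant.
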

 This is the first example of its kind among bracket-generating distributions with generic small growth vector for a given rank and ambient dimension as far as we are aware.
\begin{proof} 
For regular abnormal extremal trajectories this is a direct consequence of nonmaximality of class. 
%As discussed in Subsection \ref{corank_abnormal_subsection}, all abnormal extremal trajectories originating at any point of the ambient manifold have a corank of at least 2. This is the first example of this kind for a bracket-generating distribution with the maximal possible growth vector for a given rank and a given dimension of the ambient manifold. 
This can be shown using the same arguments as in the proof of Theorem 4.1 of \cite{day2025canonicalframes},  which is based on \cite[Section 4]{Agrachev-Sarychev}. One only needs to take into account that in the real analytic category—as is the case for left-invariant distributions on Lie groups—the image of the differential of the endpoint map not only contains, but is actually equal to, the image of the canonical projection's differential applied to the maximal proper subspace in the Jacobi flag at the endpoint of the corresponding abnormal lift. 

However, in general a rank $3$ distribution $D$ with $6$-dimensional square may have nonregular abnormal extremals, i.e., those whose lifts contain a point of $\mathbb P(D^2)^\perp$: they may 
\begin{enumerate}
\item [(a)] entirely lie in $\mathbb P (D^2)^\perp$ and/or
\item [(b)] switch between $\mathbb P(D^2)^\perp$ and $\mathbb P D^\perp$.
\end{enumerate}
The latter requires the analysis of the dynamical system given by a vector field generating the characteristic line distribution near $\mathbb P(D^2)^\perp$, which is precisely the set of its stationary points. In general this analysis might be a nontrivial task (see, for example, an analogous situation in the case of rank $2$ distribution in \cite{zel99}). However, for our distribution $\mathfrak D$ it is easy to find first integrals of this dynamical system (and even explicitly integrate it) to exclude the situation in (b), namely we have the following
\begin{lemma}
\label{no_b_lemma}
If an abnormal extremal contains a point in $\mathbb P\big(\mathfrak D^\perp\setminus (\mathfrak D^2)^\perp\big)$ then it lies entirely in $\mathbb P\big(\mathfrak D^\perp\setminus (\mathfrak D^2)^\perp\big)$, i.e., it is a regular abnormal extremal. Consequently, all nonregular abnormal extremals are entirely contained in $\mathbb P(\mathfrak D^2)^\perp$. 
\end{lemma}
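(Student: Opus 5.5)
We work in left-invariant coordinates on $T^*N\cong N\times\mathfrak n^*$. Write a covector as $p=\sum_a p_a\,\omega^a$ in the dual basis to $(\oline x,\oline e_0,\oline f_0,\oline e_{-1},\oline f_{-1},\oline\eta,\oline f_{-2},\oline\nu)$. Here $p_a=\langle p, X_a\rangle$ are the momentum functions of the left-invariant fields. On $\mathfrak D^\perp$ we have $p_x=p_{e_0}=p_{f_0}=0$. The set $(\mathfrak D^2)^\perp$ is cut out further by $p_{e_{-1}}=p_{f_{-1}}=p_\eta=0$.

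\textbf{Step 1: the restricted Hamiltonian system.} Since $\mathfrak D$ is left-invariant, the momenta obey $\dot p_a=\{p_a,H\}$, with $\{p_a,p_b\}$ given by the structure constants of $\mathfrak n$. Abnormal extremals are the curves in $\mathfrak D^\perp$ of the form $H=u_1p_x+u_2p_{e_0}+u_3p_{f_0}$, where the controls $u$ are chosen so that the constraints $p_x=p_{e_0}=p_{f_0}=0$ are preserved. Using the relations \eqref{n rels}, I would write down:
\begin{itemize}
\item the preservation conditions $\dot p_x=\dot p_{e_0}=\dot p_{f_0}=0$, which give linear equations for $u$ with coefficients $p_{e_{-1}},p_{f_{-1}},p_\eta$;
\item the evolution of the remaining momenta, e.g. $\dot p_{e_{-1}}=\dot p_\eta=0$ up to sign;
\item $\dot p_{f_{-1}}$, which is a combination of $u_1p_{f_{-2}}$ and $u_3p_\nu$;
\item $\dot p_{f_{-2}}=\dot p_\nu=0$, since $\mathfrak n_{-3}$ is central.
\end{itemize}

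\textbf{Step 2: first integrals.} From this system, $p_{e_{-1}}$, $p_\eta$, $p_{f_{-2}}$ and $p_\nu$ are constant along any abnormal extremal. This holds for arbitrary measurable controls, because their brackets with $\mathfrak n_{-1}$ land in the constraint set or vanish. Suppose first that $(p_{e_{-1}},p_\eta)\neq(0,0)$ at one time. Then it is nonzero for all times, so the curve never meets $\mathbb P(\mathfrak D^2)^\perp$ and we are done.

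\textbf{Step 3: the remaining case.} Otherwise $p_{e_{-1}}=p_\eta=0$ throughout, and regularity at the given time means $p_{f_{-1}}\neq0$ there. I would substitute into the preservation equations. They should force $u$ into the one-dimensional kernel, roughly $u_1p_{f_{-1}}=u_3\cdot 0$ and $u_3 p_{f_{-1}}+\dots=0$, so that $u$ is proportional to $\oline e_0$. Then $\dot p_{f_{-1}}=0$ as well. So $p_{f_{-1}}$ is also constant on the maximal regular interval, and by continuity it cannot reach $0$. The real work is to extend this across the whole curve, since a Lipschitz extremal might enter $\mathbb P(\mathfrak D^2)^\perp$ where the controls are unconstrained. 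To handle this, I would consider the set of times where $p_{f_{-1}}\neq0$. It is open, and on each component $p_{f_{-1}}$ is constant, hence bounded away from $0$ up to the boundary. Therefore the set is also closed, and so it is the whole interval. The same open–closed argument must be checked against the projective rescaling, working with ratios, which is harmless because the equations are homogeneous.

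\textbf{Main obstacle.} The hardest step is verifying Step 3 for exactly the conserved combination. The expected derivative $\dot p_{f_{-1}}$ involves $u_1p_{f_{-2}}$ and $u_3p_\nu$, so one must check that the kernel computed in the preservation equations kills both terms. If it does not, I would instead look for a quadratic first integral, for example $p_{f_{-1}}^2-2p_{f_{-2}}p_{f_0}$-type combinations adapted to $p_\nu,p_{f_{-2}}$, whose nonvanishing characterizes regularity.
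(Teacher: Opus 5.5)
Your proposal is correct and is essentially the paper's argument: your $p_\eta$, $p_{e_{-1}}$, $p_{f_{-1}}$ are, up to sign, the paper's $u_{12}$, $u_{31}$, $u_{23}$. The paper shows these are constant along the characteristic field $\mathfrak C$, the last one only when the first two vanish. Your remark that the central momenta stay constant for arbitrary controls, together with the open--closed argument, just makes the paper's step across nonregular points more explicit.

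The obstacle you flag does not arise. With $p_{e_{-1}}=p_\eta=0$, the preservation conditions reduce exactly to $u_1p_{f_{-1}}=u_3p_{f_{-1}}=0$. This kills both terms of $\dot p_{f_{-1}}=u_1p_{f_{-2}}+u_3p_\nu$, so no quadratic first integral is needed.
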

\begin{proof}
To start with, let us introduce some terminology from the elementary symplectic geometry. Given a vector field $X$ on $M$,  define the function $H_X$ on $T^*M$, the \emph{Hamiltonian} or the \emph{quasi-impulse} associated with the vector field $X$, and let $\vec H_x$ be the corresponding Hamiltonian vector field, so that $i_{\vec H_x}\sigma =-d H_X$. Then it is well known that
for any two vector fields $X_1$ and $X_2$ on $M$  
\begin{equation}
\label{poison1}
\overrightarrow {H_{X_1}} (H_{X_2})= d\, H_{X_2}\bigl(\overrightarrow {H_{X_1}}\bigr) =H_{[X_1, X_2]} 
\end{equation}
(which in turn is equal to $\{H_{X_1}, H_{X_2}\}$, the \emph{Poisson brackets} of $H_{X_1}$ and $H_{X_2}$).

Now let $v_1, v_2, v_3$ be the local basis of left invariant vector fields for our model $\mathfrak D$ so that in the notation of \eqref{n basis} 
\begin{equation}
\label{translation}
v_1=\oline e_0, \quad v_2=\oline f_0, \quad \textrm{ and } \quad  v_3=\oline x
\end{equation}
(these new notations are also aligned with notations of Appendix \ref{Appendix} below and in particular with \eqref{plane_1} below).
Let $u_i:=H_{v_i}$, $u_{ij}:=H_{[v_i, v_j]}$, and
$u_{[i,[j,k]]}:=H_{[v_i, [v_j,v_k]]}$.
For the rest of the proof it is more convenient to work with objects, submanifolds, characteristic distributions, and their generators, in the cotangent bundle instead of its projectivization. The characteristic distribution on $\mathfrak D^\perp$ is obtained as a kernel of the restriction of the canonical symplectic form $\sigma$ to $\mathfrak D^\perp$ will be denoted by the same letter $\mathcal C$ that previously denoted its pushforward to $\mathbb P (\mathfrak D^\perp)$.

Then, taking into account that
$$\mathfrak D^\perp=\{u_1=u_2=u_3=0\}$$
and using \eqref{poison1}, it is easy to show (\cite[Lemma 1]{doubrov2008rank3},\cite{doubrov2016JacobiCurves}) that the characteristic line distribution of $\mathfrak D$ satisfies 
\begin{equation}
\label{H_D_intersection}
\mathcal C(\lambda) =T_\lambda \mathfrak D^\perp\cap \langle \vec u_1(\lambda), \vec u_2(\lambda), \vec u_3(\lambda)  \rangle =\langle \mathfrak C(\lambda)\rangle,
\end{equation}
where
\begin{equation}
\label{Characteristic_rank3}
\mathfrak C=u_{23}\vec u_1+u_{31} \vec u_2+u_{12}\vec u_3.
\end{equation}
so the set  of all stationary points of $\mathfrak C$ (on $\mathfrak D^\perp$) is exactly 
\begin{equation}
\label{D^2_perp_u}
(\mathfrak D^2)^\perp=\{u_{12}=u_{31}=u_{23}=0\}\cap \mathfrak D^\perp.
\end{equation}

Then, from \eqref{n rels} and \eqref{translation}, using \eqref{poison1}, it follows that
\begin{align}
\mathfrak C(u_{12}) &= 0, \qquad \mathfrak C(u_{31}) = 0, \label{u_with_1}\\
\mathfrak C(u_{23}) &= u_{31}\,u_{[2,[2,3]]} + u_{12}\,u_{[3,[2,3]]}\label{u_23},
\end{align}
where $\mathfrak C(f)$ denotes the directional derivative of the vector field $\mathfrak C$ on the function $f$.

In particular, \eqref{u_with_1} implies that \(u_{12}\) and \(u_{31}\) are constant along any regular abnormal extremal. Moreover, \eqref{u_23} shows that if these two functions vanish along a regular abnormal extremal, then \(u_{23}\) is also constant along that extremal.

Now assume that an abnormal extremal $\Gamma$ contains both points in $\mathfrak D^\perp \setminus (\mathfrak D^2)^\perp$ and points in $(\mathfrak D^2)^\perp$. Then, by \eqref{D^2_perp_u}, the functions $u_{12}$, $u_{31}$, and $u_{23}$ vanish at every point of
$
\Gamma \cap (\mathfrak D^2)^\perp.
$
Since $u_{12}$ and $u_{31}$ are constant along $\Gamma$, and $u_{23}$ is also constant whenever $u_{12}=u_{31}=0$, it follows that $u_{12}$, $u_{31}$, and $u_{23}$ vanish identically along $\Gamma$. Hence, again by \eqref{D^2_perp_u}, we obtain
$
\Gamma \subset (\mathfrak D^2)^\perp,
$
which contradicts the assumption that
$
\Gamma \setminus (\mathfrak D^2)^\perp \neq \varnothing
$.
\end{proof}
It remains to analyze the corank of abnormal extremal trajectories whose abnormal lifts are entirely contained in  $\mathbb P(\mathfrak D^2)^\perp$.
Again, for simplicity let us work with the objects on the cotangent bundle instead of its projectivization.
Similarly to \eqref{H_D_intersection}, a nonregular abnormal extremal $\Gamma$ must be tangent at each of its points $\lambda$ to 
\begin{equation}
\label{H_D_intersection_1}
\mathcal A_1(\lambda):= 
T_\lambda (\mathfrak D^2)^\perp\cap \langle \vec u_1(\lambda), \vec u_2(\lambda), \vec u_3(\lambda)  \rangle
\end{equation}

Note that the fiber of $(\mathfrak D^2)^\perp$ can be naturally identified with $\mathfrak n_{-3}^*$ using the structure of Lie group on the base manifold $N$ on which the distribution $\mathfrak D$ is defined. Using \eqref{poison1} again, it is easy to show that the push-forward of $\mathcal A_1(\lambda)$ by the canonical projection $\pi:(\mathfrak D^2)^\perp\to N$ coincides with the kernel of the endomorphism corresponding to $\lambda$ under the identification above via the map given by the equation \eqref{dual} of Appendix \ref{Appendix}. This and \eqref{plane_1} imply that $\mathcal A_1$ is a rank $2$ distribution on $(\mathfrak D^2)^\perp$.\footnote{Note that if the same constructions are done for $(3,6,8)$ distributions $D$ with generic Tanaka symbol, then $\mathcal A_1$ is trivial, i.e. equal to zero, at a generic point of $(D^2)^\perp$. This is because generic elements of a generic plane in $\mathfrak{sl}_3(\mathbb R)$ have trivial kernels. The only other case of Tanaka symbols of $(3,6,8)$ distributions, when $\mathcal A_1$ is a rank $2$ distribution, is the case when the Tanaka symbol corresponds to the plane as in \eqref{plane_2}.}

From \eqref{n rels} and \eqref{translation}, using \eqref{poison1}, one can show that 
\begin{equation}
\label{C1-quasi}
\mathcal A_1=\langle \vec u_1, u_{[3, [2,3]]} \vec u_2-u_{[2,[2,3]]}\vec u_3\rangle
\end{equation}
By analogy with \eqref{J0 fact}, define $\mathcal Z^{(0)}$ as the lift of $\mathfrak D$ to $\mathbb P(\mathfrak D^2)^\perp$; that is,
\begin{equation}
    \label{Z0 fact}
    \mathcal{Z}^{(0)}(\lambda) = \pi^*\mathfrak D(\lambda)=\{v\in T_\lambda \mathbb P(\mathfrak D^2)^\perp: T\pi(v) \in \mathfrak D\}
\end{equation}To show that any nonregular abnormal extremal trajectory has corank at least $2$ it suffices to show that if $Y$ is a vector field tangent to the distribution $\mathcal A_1$, then 
\begin{equation}
\label{corank2_irreg}
\dim \,T_\lambda\pi\left(\mathrm{span}\left \{(\mathrm{ad}\, Y)^j (\mathcal Z^{(0)})(\lambda):j\in \mathbb N\right\}\right)\leq 6,\quad  \forall\ \lambda \in  \mathbb P(\mathfrak D^2)^\perp.
\end{equation}

Now by analogy with \eqref{J(-i) def} let 
\begin{equation}
\label{Z(-i) def}
    \mathcal{Z}^{(-i)} = \mathcal{Z}^{(1-i)} + [Y,\mathcal{Z}^{(1-i)}]\quad\text{for all }i\geq 1.
    \end{equation}
Assume that 
\[
    Y=\alpha_1  \vec u_1 +\alpha_2 (u_{[3, [2,3]]} \vec u_2-u_{[2,[2,3]]}\vec u_3).
\]
for some smooth functions $\alpha_1$ and $\alpha_2$. Then by direct computations from \eqref{n rels} and \eqref{translation}, using \eqref{poison1}, one can show that
\begin{align}
\mathcal{Z}^{(-1)}&=
\langle -\alpha_2(u_{[3,[2,3]]} \vec u_{12}-u_{[2,[2,3]]}\vec u_{13}), 
\\
&\quad\quad\alpha _1 \vec u_{12}+\alpha_2 u_{[2, [2,3]]} \vec u_{23}, \alpha _1 \vec u_{13}+\alpha_2 u_{[3, [2,3]]} \vec u_{23} \rangle+ \mathcal{Z}^{(0)},
\\
\label{flagZ}
\mathcal{Z}^{(-2)}&=\langle\alpha_2^2(u_{[2,[2,3]]}^2+u_{[3,[2,3]]}^2)(u_{[3,[2,3]]}\vec u_{[2,[2,3]]}-u_{[2,[2,3]]}\vec u_{[3,[2,3]]})\rangle+\mathcal Z^{(-1)},
\\
\mathcal Z^{(-3)}&=\mathcal Z^{(-2)}.
\end{align}
Also, 
\begin{equation}
\label{Zdiff}
\dim \mathcal Z^{(-1)}-\dim \mathcal Z^{(0)}\leq 2,
\end{equation}
because the three vector fields appearing in the span in the first line of \eqref{flagZ} are always linearly dependent. Together, \eqref{flagZ} and \eqref{Zdiff} imply \eqref{corank2_irreg}.
In fact, our proof shows that the equality in \eqref{corank2_irreg} holds for generic $\lambda$ and $Y$, meaning that generic nonregular abnormal extremal trajectories have corank 2.
\end{proof}

Note that a necessary condition for an abnormal extremal trajectory to be a length minimizer with respect to a sub-Riemannian metric on $D$, under the additional assumption that it is not a sub-Riemannian normal extremal trajectory,  is that it admits an abnormal lift lying entirely in $\mathbb{P}(D^2)^\perp$. This is known as the \emph{Goh condition} \cite{Agrachev-Sarychev}. This is also a necessary condition for an abnormal extremal trajectory  to be \emph{rigid}, i.e. isolated in the $C^1$ (or, more precisely,  $W^1_\infty$)-topology in the set of all horizontal curves of $D$ connecting given two points \cite{Agrachev-Sarychev, Bryant_Hsu, zel99}. 

Another second order condition necessary for rigidity of a smooth abnormal extremal trajectory $t\mapsto \gamma(t)$, the \emph{generalized Legendre condition} is also satisfied here. 
It states that if $y$ is a vector field tangent to $\gamma$, there is an abnormal lift   $\big([p(t)], \gamma(t)\big)$ of $\gamma(t)$ to $\mathbb P (D^2)^\perp$, such that for every $t$ the quadratic form on $D\big(\gamma(t)\big)/\big\langle y\big(\gamma(t)\big)\big\rangle$ given by 
 \begin{equation}
 \label{Legendre}
     w\mapsto p(t) \Big(\Big[\widetilde w, \big[\widetilde w, y\big(\gamma(t)\big)\big]\Big]\Big)
 \end{equation}    
     is sign semidefinite, where $\widetilde w$ is a local section of $D$ that extends a representative of the coset $w$; one can show that the expression in \eqref{Legendre} is independent of the choice of $\widetilde w$. However, one can use \eqref{n rels}, \eqref{translation}, and \eqref{poison1} to show that the quadratic form in \eqref{Legendre} is degenerate at all points $[p(t)]$ along the abnormal lift. In fact, the quadratic form is represented up to scaling by the rank 1 matrix 
     \[
     \begin{pmatrix}
         u_{[2,[2,3]]}^2 & u_{[2,[2,3]]}u_{[3,[2,3]]}\\
         u_{[2,[2,3]]}u_{[3,[2,3]]}&u_{[3,[2,3]]}^2
     \end{pmatrix}.
     \]
This demonstrates that the \textit{strong generalized Legendre condition}, which is a sufficient condition for rigidity requiring sign definiteness of the quadratic form \eqref{Legendre}, does not hold along $\gamma$. Therefore, the rigidity of such trajectories is an open question which we leave for future work.

We close this section with an observation suggesting an alternative symplectification procedure suited to some distributions of nonmaximal class; in particular, the alternative approach is effective for those Tanaka symbols of $(3,6,8)$ corresponding to the orbits of the planes \eqref{plane_1} and \eqref{plane_2} in $\mathfrak {sl}_3(\mathbb R)$ under the adjoint action, as outlined in Subsection \ref{(3,6,8)_subsec} of Appendix \ref{Appendix}. For these symbols, one can imitate the construction of Section \ref{subsection:characteristic}, replacing $W_D$ by $\mathbb P(D^2)^\perp$. 

In more detail, let $D$ be a $(3,6,8)$ distribution with the same Tanaka symbol as $\mathfrak D$. The contact distribution on $\mathbb PT^*M$ induces a corank $1$ distribution $\xi_1$ on the $9$-dimensional manifold $\mathbb P (D^2)^\perp$; the distribution $\xi_1$ has a rank $4$ Cauchy characteristic distribution $\mathcal C_1$. Note that the rank $2$ distribution $\mathcal A_1$ defined by \eqref{H_D_intersection_1} (or more precisely its pushforward to the projectivization $\mathbb P(D^2)^\perp$) is a subdistribution of $\mathcal C_1$,  and we have that at a generic point of $\mathbb P(D^2)^\perp$ 
\begin{equation}
\label{maxclass_alt} 
    \mathrm{span}\left\{ (\mathrm{ad}\, \mathcal A_1)^j   \mathcal Z^{(0)}:j\in \mathbb N\right\}=\xi_1,
\end{equation}
where $\mathcal Z^{(0)}$ is defined as in \eqref{Z0 fact} with $\mathfrak D$ replaced by $D$. Observe that the iterated brackets with $\mathcal A_1$ in \eqref{maxclass_alt} generate a larger set than the iterated brackets in \eqref{corank2_irreg}: the former allow the parameters $\alpha_1$ and $\alpha_2$ defining $Y$ to vary, while the latter consider only brackets with a fixed $Y$. Thus in contrast to \eqref{Zdiff}, we have
\[
    \dim\, (\mathrm{ad}\, \mathcal A_1) \mathcal Z^{(0)}-\dim\, \mathcal Z^{(0)}=3
\]
at a generic point of $(D^2)^\perp$.
Relation \eqref{maxclass_alt} can be interpreted as a condition of maximality of class for this alternative symplectification procedure (see also discussions in \cite[subsection 4.2.3]{BI2026generalized}). We hope that this observation will be useful for further treatment of $(3,n)$ distributions of nonmaximal class in the original sense of \cite{doubrov2016JacobiCurves} and Subsection \ref{subsection: CanFlag}.

\appendix
\section{Tanaka symbols of (3,8)-distributions with 6-dimensional square}
\label{Appendix}

A natural alternative approach to proving the final statement of Theorem \ref{Thm: main theorem} would proceed in two steps:
\begin{enumerate}
    \item [I.] Classify all Tanaka symbols of $(3,6,8)$  and $(3,6,7,8)$ distributions;
    \item [II.] Show that the Tanaka algebraic prolongation of the Tanaka symbol induced by the Jacobi-Tanaka algebra of our model is the most symmetric one. Since the space of Tanaka symbols depends on continuous parameters (as demonstrated below), this step would necessitate using a version of Tanaka theory adapted for distributions with non-constant Tanaka symbols \cite{Hong_Hwang}.
\end{enumerate}

It can be shown that $(3,6,7,8)$ distributions have only two possible Tanaka symbols. However, as we will see subsequently, the classification of $(3,6,8)$ distributions reduces to the classification of a pair of $3\times 3$ matrices up to conjugation (more precisely, the classification of planes of traceless matrices up to conjugation). In the terminology of the representation theory of quivers and finite-dimensional algebras, this is known as a ``wild'' problem \cite{Drozd1979, Drozd1980}. Coupled with the complexities of the second step, the strategy becomes largely unfeasible.

On the other hand, the symplectification procedure stipulates that among all Tanaka symbols, only those induced by Jacobi-Tanaka algebras are relevant for our problem of finding the maximally symmetric model. There are only two such symbols for $(3,6,8)$ distributions and one for $(3,6,7,8)$. Consequently, task II boils down to comparing the dimensions of the Tanaka algebraic prolongations of just these three Tanaka symbols.

\subsection{Case of (3, 6, 8) distributions} 
\label{(3,6,8)_subsec}
Let $D$ be a $3$-dimensional vector distribution on a $8$-dimensional manifold $M$ such that
$$\dim D^{-2} = 6, \quad \dim D^{-3} = 8$$
so that $D^{-3} = TM$. Recall that the Tanaka symbol $\mathfrak{n}(q)$ at each point $q \in M$ has graded pieces 
$\mathfrak n_{-i}(q):=D^{-i}(q)/D^{1-i}(q)$ for each $i>0$. The Tanaka symbol $\mathfrak n = \mathfrak n(q)$ of this distribution at a point $q$ has the form
\begin{equation}
    \mathfrak{n} = \mathfrak{n}_{-1} \oplus \mathfrak{n}_{-2} \oplus \mathfrak{n}_{-3},
\end{equation}
where $\dim \mathfrak{n}_{-1} = 3$, $\dim \mathfrak{n}_{-2} = 3$, and $\dim \mathfrak{n}_{-3} = 2$.

Identify $\mathfrak{n}_{-1}$ with a $3$-dimensional vector space $V$. Then $\mathfrak{n}_{-2}$ is naturally identified with $V \wedge V$. Fixing a volume form $\Omega$ on $V$, we can also identify $\mathfrak{n}_{-2}$ with $ V \wedge V \cong V^*$: 

$$v_1\wedge v_2 \to  \iota_{v_2}\circ \iota_{v_1} (\Omega), \quad \forall v_1, v_2\in V.$$ Then the Lie algebra structure of the symbol $\mathfrak{n}$ is uniquely determined by a surjective map
\begin{equation}
\label{-1_-2_-3}
\mathfrak{n}_{-1} \otimes \mathfrak{n}_{-2} \to 
%\mathbb{R}^2
\mathfrak{n}_{-3}, \quad \text{or} \quad V \otimes V^* \to 
%\mathbb{R}^2
\mathfrak{n}_{-3}
\end{equation}
such that 
\begin{equation}
\label{Levi}
x\otimes y\mapsto [x, y], x\in \mathfrak {n}_{-1}, y\in \mathfrak{n}_{-2},
\end{equation}
or its dual 
%\mathbb{R}^2 
\begin{equation}
\label{dual}
\mathfrak{n}_{-3}^*\to V^* \otimes V \cong \operatorname{End}(V).
\end{equation}
Since the latter map is injective, it is uniquely determined by its image, i.e by a plane in $\mathrm{End}(V)$. Moreover, the Jacobi identities are equivalent to the fact that the image of this dual map consists of traceless endomorphisms. Hence, the space of Tanaka symbols $\mathfrak{n}$ is in the bijective correspondence with the space of orbits of the Grassmannian of planes in $\mathfrak{sl}_3(\mathbb R)$  with respect to the adjoint action of $\mathrm{SL}_3(\mathbb R)$ on its Lie algebra.

Let us express this plane in terms of structure constants of an appropriate basis $\mathfrak n$: let $V=\mathrm{span} (v_1, v_2, v_3)$ so that $\Omega(v_1, v_2, v_3)=1$
Then, if $(v^1,v^2, v^3)$ is the basis of $V^*$ dual to $(v_1, v_2, v_3)$, then  the identification $ V \wedge V \cong V^*$ is given by 
\begin{equation}
\label{basis_identification}
v_1\wedge v_2 \mapsto v^3, \quad v_3\wedge v_1\mapsto v^2, \quad v_2\wedge v_3\mapsto v^1.    
\end{equation}
Finally, let $\mathfrak{n}_{-3}=\mathrm{span}(w_1, w_2)$. Taking onto account the identification $ V \wedge V \cong V^*$ and \eqref{basis_identification}, assume that the map  \eqref{-1_-2_-3}, is given by 
\[ v_i\otimes v^j\mapsto \sum_{s=1}^2 a_{ij}^s w_s.\]
The only nontrivial Jacobi identity on $\mathfrak n$ is obtained from the triple $(v_1,v_2,v_3)$, which is equivalent via \eqref{basis_identification} to the condition that the matrices $A_s:=(a_{ij}^s)$ are traceless.

The plane in the image of the map in \eqref{dual} is spanned by the elements of $\mathrm{End}(V)$ represented in the basis $(v_1, v_2, v_3)$ by the matrices $A_1^T$ and $A_2^T$, the transposes of $A_1$ and $A_2$.

The intrinsic difficulty in classifying these Tanaka symbols stems directly from the intricate continuous moduli of the underlying orbit space of matrix planes. The classification of planes of traceless $3\times 3$ matrices up to conjugation is equivalent to classifying pairs of traceless matrices up to simultaneous conjugation by $\mathrm{SL}(3,\mathbb{R})$ and linear changes of basis in the plane. As established by Drozd \cite{Drozd1979, Drozd1980}, the underlying algebraic problem of classifying pairs of matrices (in all dimensions) up to simultaneous conjugation is strictly ``wild''. 
%While tame representation types also admit continuous families of representations, the crucial distinction lies in the indecomposable blocks. In a tame problem, the indecomposable representations of any given dimension belong to families parameterized by at most a single continuous variable. In stark contrast, 
Roughly speaking, wildness means the representation category contains the representations of the free algebra on two generators, so indecomposable blocks occur in families whose number of continuous parameters grows arbitrarily large with the representation dimension. Although this wildness is formally a property of the total classification in all dimensions, its consequences are acutely felt even in low dimensions. Already in dimension three, the moduli space lacks any simple description and depends on multiple continuous parameters rather than a simple discrete or 1-dimensional classification for indecomposable blocks, like in the case of one matrix.

The dimension of the Grassmannian of planes in $8$-dimensional space $\mathfrak{sl}_3(\mathbb R)$ is equal to $2\times (8-2)=12$. The generic orbits under the adjoint action of $\mathrm {SL}_3(\mathbb R)$ have trivial stabilizers, i.e. they are 8 dimensional and therefore parametrized by $12-8=4$ parameters. From the perspective of geometric invariant theory, the ring of polynomial invariants for this conjugation action is generated entirely by traces of words in the matrices. This was proven in the foundational works of Procesi \cite{Procesi1976}, and the $3\times 3$ matrix case was more explicitly elaborated by Formanek \cite{Formanek1979} and Teranishi \cite{Teranishi1986}. For generic orbits, the classification heavily relies on these invariants.
%a pair of traceless $3\times 3$ matrices generally depends on $8$ continuous parameters up to simultaneous conjugation ($16$ dimensions for the pair minus $8$ for the group $\mathrm{SL}(3,\mathbb{R}){\color{red}\cong\mathrm{SL}(3,\mathbb R)}$), which drops to $4$ continuous parameters when considering the plane itself (accounting for the $4$-dimensional $\mathrm{GL}(2,\mathbb{R})$ action of basis changes). 
However, this invariant-theoretic approach fundamentally fails for the nilpotent cone---because traces of all words in jointly nilpotent matrices necessarily vanish, these polynomial invariants cannot separate nilpotent orbits. 
%In the algebraic quotient, the orbit closures of all such nilpotent planes intersect at the origin, rendering them entirely undetectable and indistinguishable by trace invariants.

Despite the continuous moduli of generic planes, there are exactly two orbits of planes in $\mathfrak{sl}_3(\mathbb{R})$ of minimal dimension (equal to 2), both in the nilpotent cone. They are orbits of the following planes:

\begin{align}
\Pi_1=\left\{\begin{pmatrix}0&a&b\\0&0&0\\0&0&0\end{pmatrix}: a,b\in \mathbb R  \right\}\label{plane_1}\\
\Pi_2=\Pi_1^T=\left\{\begin{pmatrix}0&0&0\\a&0&0\\b&0&0\end{pmatrix}: a,b\in \mathbb R  \right\}\label{plane_2}
\end{align}

The first orbit consists of planes of $3\times 3$ matrices which are nilpotent of rank $1$ and have the same image. The second orbit consists  of planes of $3\times 3$ matrices which are nilpotent and have the same two dimensional kernel. 

To justify that the orbits of $\Pi_1$ and $\Pi_2$ have minimal dimension, first recall a classical result: up to conjugation, $\mathfrak{sl}_3(\mathbb R)$ has no subalgebras of codimension $1$ and exactly two subalgebras $\mathfrak{p}_1$ and $\mathfrak{p}_2$ of codimension 2, which geometrically correspond to the stabilizers of a $1$-dimensional subspace (a line) and a $2$-dimensional subspace (a plane) in $\mathbb{R}^3$. This two-fold classification can be traced back to Lie's  and Engel's foundational work on finite-dimensional Lie algebras of vector fields on the projective plane \cite{Lie1893}. For modern algebraic formulations and exhaustive classifications of the maximal subalgebras of real simple Lie algebras, we refer the reader to the works of Dynkin \cite{Dynkin1952}, Komrakov \cite{Komrakov1990}, and Winternitz et al.\ \cite{Winternitz}.

If $(x_1, x_2, x_3)$ are standard coordinates in $\mathbb{R}^3$, the first subalgebra $\mathfrak{p}_1$ can be taken to stabilize the $x_1$-axis, and consequently, it stabilizes the plane $\Pi_1$. The second subalgebra $\mathfrak{p}_2$ can be taken to stabilize the coordinate $(x_2, x_3)$-plane, and consequently, it stabilizes the plane $\Pi_2$.

To justify that these planes $\Pi_i$ are the only planes stabilized by $\mathfrak{p}_i$ for $i=1,2$, observe that both of these subalgebras are parabolic, hence they define the grading on $\mathfrak{sl}_3(\mathbb{R})$. For example, for $\mathfrak{p}_1$, we have the decomposition:
\begin{equation}
\label{sl_3_decomp}
\mathfrak{sl}_3(\mathbb{R}) = \mathfrak{g}_{-1} \oplus \mathfrak{g}_0 \oplus \mathfrak{g}_1,
\end{equation}
where in fact $\mathfrak{g}_{1} = \Pi_1$, $\mathfrak{g}_{-1} = \Pi_2$, and $\mathfrak{g}_0 \cong \mathfrak{gl}_2(\mathbb{R})$, which corresponds to the block-diagonal structure of $3\times 3$ matrices according to the choice of the parabolic subalgebra $\mathfrak{p}_1$. 
The decomposition \eqref{sl_3_decomp} decomposes $\mathfrak{sl}_3(\mathbb{R})$ into $\mathfrak{g}_0$-modules, where $\mathfrak{g}_{-1}$ and $\mathfrak{g}_{1}$ are irreducible, and $\mathfrak{g}_0$ is further decomposed into a $3$-dimensional module ($\cong \mathfrak{sl}_2(\mathbb{R})$) and a $1$-dimensional module. This implies that $\mathfrak{g}_{-1}$ and $\mathfrak{g}_{1}$ are the only planes ($2$-dimensional invariant subspaces) stabilized by $\mathfrak{g}_0$. 
Finally, $\Pi_1 = \mathfrak{g}_{1}$ is the only plane stabilized by $\mathfrak{p}_1 = \mathfrak{g}_0 \oplus \mathfrak{g}_{1}$. The argument showing that $\Pi_2$ is the only plane stabilized by $\mathfrak{p}_2$ is completely analogous (one simply needs to swap $\mathfrak{g}_{-1}$ with $\mathfrak{g}_1$).

A remarkable, though expected, fact is that those two orbits correspond exactly to Tanaka symbols induced by the two possible Jacobi-Tanaka algebras corresponding to $(3,6,8)$ distributions, namely 
\begin{itemize}
\item The first orbit corresponds to the Tanaka symbol 
induced by the Jacobi-Tanaka algebra $\mathfrak s$ of nonmaximal class defined in Section \ref{subsection: nonmax model}, i.e., to our  $29$-dimensional model;
\item The second orbit corresponds to the Tanaka symbol induced by the Jacobi-Tanaka algebra of the maximal class  with Jacobi symbol given by the $2\times 5$ rectangular Young diagram (i.e., $k=2$ and $\ell=0$ in the notation of \eqref{Jacobi Symb Diagram}). The infinitesimal symmetry algebra of the corresponding symplectically flat distribution was computed in \cite{doubrov2008rank3}: it is $18$-dimensional and isomorphic to the semidirect sum $$(\mathfrak{gl}_2 \oplus  \mathfrak{sl}_2)\ltimes (\mathbb R^2\otimes V_5\oplus\langle \eta\rangle),$$ where  $\mathfrak{sl}_2$ acts irreducibly on a five-dimensional space $V_5$, $\mathfrak{gl}_2$ acts in the standard way on $\mathbb R^2$, $\mathfrak{heis}_{11}=\mathbb R^2\otimes V_5+\langle \eta\rangle$  is the 11-dimensional Heisenberg algebra with center $\langle \eta\rangle$ and for every line $w \in\mathbb R^2$,  $w\otimes V_5$ is a Lagrangian  subspace in the space $\mathbb R^2\otimes V_5$ with respect to the conformal symplectic structure induced by the brackets on $\mathfrak{heis}_{11}$.  
%the notation of the  Proposition \ref{prop: m rels} if $w_1, w_2Heres $\mathfrak{sl}_2$ and $\mathfrak{glstructure constitubyhe symmetry algebra of the curve of flags with the rectangular $2 \times 5$ diagram: $\mathfrak{sl}_2$ corresponds to the projective parameter/ acts irreducibly on each row of the diagram) and $\mathfrak{gl}_2$ is simply the action on the plane of highest weight vectors. In the Heisenberg (contact) grading—i.e., when $\mathfrak{sl}_2 + \mathfrak{gl}_2$, embedded in $\mathfrak{csp}(10)$, are of weight zero—the first prolongation is $0$.
\end{itemize}

%the space of endomorphisms of $3$-dimensional space or, equivalently, a plane in the space of $3 \times 3$ matrix up to a conjugatio.

\subsection{Case of (3, 6, 7, 8) distributions}
In this case, the image of the map \eqref{dual} defines a line in $\mathrm{End}(V)$ generated by a traceless matrix. However, in addition, the map 
\begin{equation}
\label{Levi13}
\mathfrak{n}_{-1}\otimes \mathfrak {n}_{-3} \to \mathfrak n_{-4},
\end{equation}
given by analogy with \eqref{Levi},  defines a line in $\mathfrak n_{-1}^*=V^*$, which in turn is determined by the  plane in $V$ annihilating the elements of this line. Therefore the Tanaka symbol is uniquely determined by a pair $([A], W)$, where $[A]$ is a line generated by $A\in \mathrm {End}(V)$ and $W$ is a plane in $V$, up to the natural action of $\mathrm {GL}(V)$. The Jacobi identity in weight $-3$ is again equivalent to the condition that $\mathrm{tr}(A)=0$, while the Jacobi identity in weight $-4$ is equivalent to the condition that $A$ preserves $W$ and acts on it by scaling. This leads to exactly two orbits: one in which this scaling factor is zero and another in which this scaling is nonzero; fixing a basis $(v_1,v_2,v_3)$ of $V=\mathfrak n_{-1}$, the orbits are represented by the pairs $([A_1], W)$ and $([A_2], W)$, where
\begin{equation}
    A_1 = \begin{pmatrix}
        0 & 0 & 1 \\
        0 & 0 & 0 \\
        0 & 0 & 0
    \end{pmatrix},
    \quad
    A_2 = \begin{pmatrix}
        1 & 0 & 0 \\
        0 & 1 & 0 \\
        0 & 0 & -2
    \end{pmatrix},
    \quad \textrm{and}\quad W = \langle v_1,v_2\rangle
\end{equation}
in some basis $(v_1,v_2,v_3)$ of $V=\mathfrak n_{-1}$.

The first orbit corresponds to the Tanaka symbol induced by the Jacobi-Tanaka algebra of maximal class with Jacobi symbol with skew diagram consisting of two rows of length $5$ each and with shift $2$ (i.e., $k=1$ and $\ell=2$ in the notation of \eqref{Jacobi Symb Diagram}). The infinitesimal symmetry algebra of the corresponding symplectically flat distribution was computed in \cite{doubrov2008rank3} and \cite{DayDoubrovZelenko2026}: it is $23$-dimensional and is described via a rational normal curve in $\mathbb{RP}^4$ and its secant varieties \cite{doubrov2025large}.

The second orbit is also interesting, because the prolongation of the corresponding Tanaka symbol is the split real form of $C_3$, so it is the flat model for the parabolic geometry $C_3/P_{12}$ (i.e., associated with the grading of $C_3$ with both short roots are marked). In particular, the infinitesimal symmetry algebra of the corresponding flat model is $21$-dimensional. It is a distribution of maximal class with the Jacobi-Tanaka symbol as in the model for the first orbit, but it is not symplectically flat (and, as expected, has a smaller dimensional symmetry algebra). This provides an example where \emph{the symplectification procedure includes a parabolic geometry as a particular case of a more general class of distributions (those with given Jacobi-Tanaka algebra) for which one has a uniform construction of absolute parallelism, and the parabolic flat model is 
not the flat model in this unification.}

Finally, it turns out that this model is exactly the symplectification of the flat $(4,7)$-distribution corresponding to the parabolic geometry $C_3/P_2$ (i.e., associated with the grading of $C_3$ with the middle root being marked). Note that the symplectification of such $(4,7)$ distribution is different from the symplectification described here, based on constructions in \cite{doubrov2016JacobiCurves} for distributions of arbitrary rank, by the fact that the characteristic distribution on the submanifold analogous to $W_D$ is of rank greater than 1  (in fact, of rank 3) at a generic point of this submanifold. More details on this will be given in \cite{KZ}.

%\newpage
\bibliographystyle{plain}
\bibliography{Bibliography.bib}

\end{document}